\documentclass[11pt,twoside]{article}
\usepackage{mathrsfs}
\usepackage{amssymb}
\usepackage{amsmath}
\usepackage[all]{xy}
\usepackage{amsthm}
\usepackage{url}
\numberwithin{equation}{section}

\newtheorem{theorem*}{Theorem}

\theoremstyle{definition}

\theoremstyle{remark}

\usepackage{paralist}

\numberwithin{equation}{section} \theoremstyle{plain}
\newtheorem*{thm*}{Main Theorem}
\newtheorem{thm}{Theorem}[section]
\newtheorem{cor}[thm]{Corollary}
\newtheorem*{cor*}{Corollary}
\newtheorem{lem}[thm]{Lemma}
\newtheorem*{lem*}{Lemma}
\newtheorem{prop}[thm]{Proposition}
\newtheorem*{prop*}{Proposition}
\newtheorem{rem}[thm]{Remark}
\newtheorem{ques}[thm]{Question}

\newtheorem*{rem*}{Remark}
\newtheorem{exa}[thm]{Example}
\newtheorem*{exa*}{Example}
\newtheorem{df}[thm]{Definition}
\newtheorem*{df*}{Definition}

\newtheorem*{conj*}{Conjecture}

\newtheorem*{Fa*}{Fact}

\newtheorem*{Qu*}{Question}
\newtheorem*{ack*}{ACKNOWLEDGEMENTS}

\newcommand{\pf}{\noindent\begin {proof}}
\newcommand{\epf}{\end{proof}}

\newcommand{\Ext}{\mbox{\rm Ext}}
\newcommand{\Hom}{\mbox{\rm Hom}}
\newcommand{\Tor}{\mbox{\rm Tor}}

\def\Im{\mathop{\rm Im}\nolimits}

\def\Mod{\mathop{\rm Mod}\nolimits}

\def\fd{\mathop{\rm fd}\nolimits}
\def\id{\mathop{\rm id}\nolimits}
\def\pd{\mathop{\rm pd}\nolimits}

\def\min{\mathop{\rm min}\nolimits}
\def\sup{\mathop{\rm sup}\nolimits}
\def\inf{\mathop{\rm inf}\nolimits}
\def\add{\mathop{\rm add}\nolimits}
\def\Add{\mathop{\rm Add}\nolimits}
\def\Prod{\mathop{\rm Prod}\nolimits}

\def\gldim{\mathop{\rm gl.dim}\nolimits}

\def\Gpd{\mathop{\rm G\text{-}pd}\nolimits}
\def\SGFpd{\mathop{\rm SGF\text{-}pd}\nolimits}
\def\Gid{\mathop{\rm G\text{-}id}\nolimits}
\def\GFPid{\mathop{\rm GFP\text{-}id}\nolimits}
\def\GFPCid{\mathop{\rm GFP_{\it C}\text{-}id}\nolimits}
\def\FPid{\mathop{\rm FP\text{-}id}\nolimits}

\def\GCpd{\mathop{\rm G_{\it C}\text{-}pd}\nolimits}
\def\SGFCpd{\mathop{\rm SGF_{\it C}\text{-}pd}\nolimits}

\def\GCid{\mathop{\rm G_{\it C}\text{-}id}\nolimits}

\def\Hom{\mathop{\rm Hom}\nolimits}
\def\Ext{\mathop{\rm Ext}\nolimits}
\def\sup{\mathop{\rm sup}\nolimits}

\def\lim{\mathop{\underrightarrow{\rm lim}}\nolimits}

\def\res{\mathop{\rm res}\nolimits}

\def\cores{\mathop{\rm cores}\nolimits}

\def\End{\mathop{\rm End}\nolimits}

\newcommand{\A}{\mathscr{A}}
\newcommand{\C}{\mathscr{C}}
\newcommand{\D}{\mathscr{D}}
\newcommand{\U}{\mathscr{U}}
\newcommand{\V}{\mathscr{V}}
\newcommand{\X}{\mathscr{X}}
\newcommand{\Y}{\mathscr{Y}}
\newcommand{\E}{\mathscr{E}}
\newcommand{\F}{\mathscr{F}}

\begin{document}
\begin{center}
{\large \bf Generalized Gorenstein Categories}
\footnote{The research was partially supported by NSFC (Grant Nos. 12371038, 12171207).}

\vspace{0.5cm}
Zhaoyong Huang\\
{\footnotesize\it School of Mathematics, Nanjing University, Nanjing 210093, Jiangsu Province, P.R. China}\\
{\footnotesize\it E-mail: huangzy@nju.edu.cn}

\vspace{0.5cm}

{\footnotesize\it Dedicated to Professor Claus Michael Ringel on the occasion of his eightieth birthday}
\end{center}


\bigskip
\centerline { \bf  Abstract}
\bigskip
\leftskip10truemm \rightskip10truemm \noindent

Let $\mathscr{A}$ be an abelian category and let $\mathscr{C}$ and $\mathscr{D}$ be additive subcategories of $\mathscr{A}$.
As a generalization of Gorenstein categories, we introduce one-sided $n$-$(\C,\D)$-Gorenstein categories with $n\geq 0$. 
Under certain conditions, we give some equivalent characterizations of one-sided $n$-$(\C,\D)$-Gorenstein categories
in term of the finiteness of projective and injective dimensions relative to one-sided Gorenstein subcategories, which 
induce some new equivalent characterizations of Gorenstein categories. Then we apply these results to categories of interest. 
In particular, a necessary condition is obtained for the validity of the Wakamatsu tilting conjecture.
\vbox to 0.3cm{}\\ \\
{\it Key Words:}  One-sided Gorenstein categories, One-sided Gorenstein subcategories,
Relative projective dimension, Relative injective dimension, Auslander classes, Bass classes.
\vbox to 0.2cm{}\\ \\
{\it 2020 Mathematics Subject Classification:} 18G25, 16E10.
\leftskip0truemm \rightskip0truemm

\section { \bf Introduction}

Let $\mathscr{A}$ be an abelian category and $\mathscr{C}$ an additive subcategory of $\mathscr{A}$. As a common generalization
of Gorenstein projective and Gorenstein injective modules, Sather-Wagstaff, Sharif and White \cite{SSW} introduced the
Gorenstein subcategory $\mathcal{G}(\mathscr{C})$ of $\mathscr{A}$ relative to $\mathscr{C}$. 
From the definition of the Gorenstein subcategory $\mathcal{G}(\mathscr{C})$,
it is known that $\mathscr{C}$ should be simultaneously a generator and a cogenerator for $\mathcal{G}(\mathscr{C})$
and both functors $\Hom_\A(\C,-)$ and $\Hom_\A(-,\C)$ should possess certain exactness.
It leads to some limitations. To overcome such limitations, Song, Zhao and Huang \cite{SZH} introduced 
one-sided Gorenstein subcategories by modifying the definition of Gorenstein subcategories, and then 
Gao and Wu \cite{GW} generalized them to a more general setting. On the other hand, Beligiannis and Reiten \cite{BR}
introduced Gorenstein categories by categorizing Gorenstein rings, and compare the relevant results about Gorenstein rings 
to Gorenstein categories. It is a natural topic to study the relation between Gorenstein categories and 
(one-sided) Gorenstein subcategories, we will study it in a more general setting.

In homological theory, homological dimensions are important and fundamental invariants, 
which play a crucial role in studying the structures of modules and rings. 
The properties of modules and rings have been studied when certain homological dimensions 
relative to some special one-sided Gorenstein subcategories of a given module or all modules 
are finite, see \cite{EJ,GW,H,Hu3,HS,SZH}, and so on.
In this paper, we will introduce generalized Gorenstein categories and give them some equivalent characterizations 
in term of the finiteness of certain homological dimensions relative to one-sided Gorenstein subcategories,
which provide a unified framework for some related results.

The paper is organized as follows.
In Section 2, we give some notions and preliminary results which will be used in the sequel.

Let $\A$ be an abelian category and let $\mathscr{C}$ and $\mathscr{D}$ be additive subcategories of $\mathscr{A}$.
In Section 3, we introduce left or right $n$-$(\C,\D)$-Gorenstein categories with $n\geq 0$.
Assume that $\A$ has enough projective and injective objects and satisfies
the AB4 and AB4$^{*}$ conditions $($that is,
$\mathscr{A}$ has arbitrary direct sums and products which are exact$)$ and
$\mathscr{C}$ is closed under direct summands. We prove the following result.

\begin{thm} \label{thm-1.1} {\rm (Parts of Theorems \ref{thm-3.11} and \ref{thm-3.14})}
It holds that
\begin{enumerate}
\item[{\rm (1)}] 
Let $r\mathcal{G}(\mathscr{C},\mathscr{D})$ be the right Gorenstein subcategory of $\A$
relative to $\C$ and $\D$, which contains $\mathcal{P}(\A)$ consisting of all projective objects in $\A$. 
If $\mathscr{C}$ is closed under kernels of epimorphisms and 
$\C$ is contained in the intersection of $\mathscr{D}$ and its left orthogonal category,
then the following statements are equivalent.
\begin{enumerate}
\item[{\rm (1.1)}] $\A$ is a right $n$-$(\C,\D)$-Gorenstein category. 
\item[{\rm (1.2)}] The $r\mathcal{G}(\C,\D)$-projective dimension of any object in $\A$ is at most $n$.
\item[{\rm (1.3)}] The three categories respectively consisting of objects with $\D$-projective, injective
and $\C$-projective dimensions at most $n$ coincide.
\end{enumerate}
\item[{\rm (2)}] 
Let $l\mathcal{G}(\mathscr{C},\mathscr{D})$ be the left Gorenstein subcategory of $\A$
relative to $\C$ and $\D$, which contains $\mathcal{I}(\A)$ consisting of all injective objects in $\A$. 
If $\mathscr{C}$ is closed under cokernels of monomorphisms and
$\mathscr{C}$ is contained in the intersection of $\mathscr{D}$ and its right orthogonal category,
then the following statements are equivalent.
\begin{enumerate}
\item[{\rm (2.1)}] $\A$ is a left $n$-$(\C,\D)$-Gorenstein category. 
\item[{\rm (2.2)}] The $l\mathcal{G}(\C,\D)$-injective dimension of any object in $\A$ is at most $n$.
\item[{\rm (2.3)}] The three categories respectively consisting of objects with $\D$-injective, projective
and $\C$-injective dimensions at most $n$ coincide.
\end{enumerate}
\end{enumerate}
\end{thm}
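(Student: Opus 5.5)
We prove (1) in the cycle (1.1)$\Rightarrow$(1.2)$\Rightarrow$(1.3)$\Rightarrow$(1.1); (2) is the exact dual, obtained by interchanging projectives and injectives, kernels of epimorphisms and cokernels of monomorphisms, and left and right orthogonals. Throughout, we use the standing hypotheses: $\C\subseteq\D\cap{}^{\perp}\D$, $\C$ is closed under direct summands and kernels of epimorphisms, and $\mathcal{P}(\A)\subseteq r\mathcal{G}(\C,\D)$. Under these hypotheses we expect $r\mathcal{G}(\C,\D)$ to be closed under extensions, direct summands and kernels of epimorphisms. We also expect the $r\mathcal{G}(\C,\D)$-projective dimension to be computable along any projective resolution, by the usual Auslander--Buchweitz type lemma from Section 2. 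We use both facts freely.

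\emph{(1.1)$\Rightarrow$(1.2).} Let $M\in\A$ and let $K=\Omega^nM$ be an $n$-th syzygy in a projective resolution; it suffices to show $K\in r\mathcal{G}(\C,\D)$. First, $\Ext^{\geq 1}_\A(K,\D)=0$: by dimension shifting, $\Ext^i_\A(K,D)\cong\Ext^{i+n}_\A(M,D)$, which vanishes because the defining condition of a right $n$-$(\C,\D)$-Gorenstein category bounds the injective dimension of objects of $\D$ by $n$. Second, we need a $\Hom_\A(-,\D)$-exact $\C$-coresolution of $K$. Take an injective coresolution of $K$ and use the other half of the definition: each injective has $\C$-projective dimension at most $n$. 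Then, via pullback and horseshoe arguments, replace the injectives by objects of $\C$, using closure of $\C$ under kernels of epimorphisms to stay inside $\C$. The orthogonality $\C\subseteq{}^{\perp}\D$ makes the resulting sequence $\Hom_\A(-,\D)$-exact. This step, building the $\C$-coresolution of a high syzygy from the bounded $\C$-projective dimension of injectives, is where I expect the main difficulty. I would first try to prove an auxiliary lemma: if $\Ext^{\geq1}_\A(K,\D)=0$ and $K$ embeds in an object of $\C$-projective dimension $\leq n$, then $K$ embeds in an object of $\C$ with cokernel again of the same type. Iterating this lemma gives the coresolution.

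\emph{(1.2)$\Rightarrow$(1.3).} Write $\D_n$, $\mathcal{I}_n$, $\C_n$ for the classes of objects of $\D$-projective, injective and $\C$-projective dimension at most $n$. Since $\C\subseteq\D$, we have $\C_n\subseteq\D_n$. If $X\in\D_n$, apply (1.2) to $X$ and use $\Ext^{\geq1}(r\mathcal{G}(\C,\D),\D)=0$. A standard argument, that an object of finite relative dimension orthogonal to the relevant class actually lies in the resolving class, then puts $X$ in $\C_n$. For $\mathcal{I}_n$, we compare Ext against injectives: for $E$ injective, (1.2) gives an $r\mathcal{G}$-resolution of length $n$. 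Objects of $r\mathcal{G}(\C,\D)$ cogenerated by $\C$ split off when mapped into $\C$-objects. This yields that injectives lie in $\C_n$ and objects of $\D$ lie in $\mathcal{I}_n$. The same argument, combined with dimension shifting, gives the equalities $\mathcal{I}_n=\C_n=\D_n$.

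\emph{(1.3)$\Rightarrow$(1.1).} Injectives lie in $\mathcal{I}_n$, hence in $\C_n$; and objects of $\D$ lie in $\D_n$, hence in $\mathcal{I}_n$. These are exactly the two defining conditions of a right $n$-$(\C,\D)$-Gorenstein category. This completes the cycle for (1), and the dual argument gives (2).
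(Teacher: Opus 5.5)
\textbf{There is a genuine gap, and it sits in (1.1)$\Rightarrow$(1.2), exactly the step you flag.} Your last step (1.3)$\Rightarrow$(1.1) is fine; it is the paper's (4)$\Rightarrow$(1).

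Under (1.1) the hypothesis of your auxiliary lemma is automatic, since every object embeds in an injective. So the lemma, iterated, asserts ${}^\perp\D=r\mathcal{G}(\C,\D)$. That is at least as strong as what you are trying to prove.

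The pullback argument gives only one step. From $K\hookrightarrow I$ and $0\to L\to C_0\to I\to 0$ with $\C$-$\pd L\le n-1$, the pulled-back sequence $0\to L\to X\to K\to 0$ splits because $\Ext^1_\A(K,L)=0$, so $K\hookrightarrow C_0$. But nothing makes $\Hom_\A(C_0,D)\to\Hom_\A(K,D)$ surjective. Hence $K'=C_0/K$ need not satisfy $\Ext^1_\A(K',\D)=0$, and the next splitting, which needs $\Ext^1_\A(K',L')=0$, can no longer be justified.

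The inclusion $\C\subseteq{}^\perp\D$ does not repair this. Combined with $\id\D\le n$, it does make an \emph{already existing} unbounded $\C$-coresolution $\Hom_\A(-,\D)$-exact, since $\Ext^1_\A(K^j,D)\cong\Ext^{n+1}_\A(K^{j+n},D)=0$ for the cosyzygies $K^j$. But existence of such a coresolution is the whole problem. A horseshoe replacement of injectives by objects of $\C$ is not available either. $\C$ need not generate $\A$ (e.g. $\C=\mathcal{P}_C(R)$), and the projective $n$-th syzygies of injectives lie in general only in $r\mathcal{G}(\C,\D)$, not in $\C$.

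\textbf{The missing idea} is to coresolve by objects of $r\mathcal{G}(\C,\D)$ rather than of $\C$, and then invoke a stability result. In the paper, (1)$\Rightarrow$(3) is trivial since $\C\subseteq r\mathcal{G}(\C,\D)$. For (3)$\Rightarrow$(2) the paper proceeds as follows.
\begin{enumerate}
\item[(i)] It applies the horseshoe lemma with projective resolutions to $0\to M\oplus X\to P\oplus I\to M\oplus X\to 0$ (Lemma \ref{lem-3.10}, which uses AB4 and AB4$^*$).
\item[(ii)] It uses that $r\mathcal{G}(\C,\D)$ is resolving \cite[Theorem 3.4]{GW} to get $0\to K_n\to G_n\to K_n\to 0$ with $G_n\in r\mathcal{G}(\C,\D)$ and $K_n\in{}^\perp\D$.
\item[(iii)] It applies \cite[Proposition 3.12]{GW} to the $\Hom_\A(-,\D)$-exact complex $\cdots\to G_n\to G_n\to\cdots$ to get $K_n\in r\mathcal{G}(\C,\D)$.
\item[(iv)] Closure under direct summands then handles $M$.
\end{enumerate}

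\textbf{Your (1.2)$\Rightarrow$(1.3) is also only gestured at.} Deducing $\C$-$\pd\mathcal{I}(\A)\le n$ from bounded $r\mathcal{G}(\C,\D)$-dimension is the nontrivial direction of \cite[Proposition 4.6(1)]{GW}. The inclusion $\id^{\le n}\subseteq\C$-$\pd^{\le n}$ requires gluing the $\C$-resolutions of the terms of an injective coresolution, as in Proposition \ref{prop-3.4}. That is precisely where closure of $\C$ under kernels of epimorphisms is used. The argument you cite for $\D$-$\pd^{\le n}\subseteq\C$-$\pd^{\le n}$ does not apply: an object of finite $\D$-projective dimension need not lie in ${}^\perp\D$, and membership in $r\mathcal{G}(\C,\D)$ would not give a length-$n$ $\C$-resolution anyway. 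The correct route is $\D$-$\pd^{\le n}\subseteq\id^{\le n}\subseteq\C$-$\pd^{\le n}$.
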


As a consequence of Theorem \ref{thm-1.1}, we get that $\A$ is $n$-Gorenstein if and only if 
the Gorenstein projective dimension of any object in $\A$ is at most $n$,
if and only if the Gorenstein injective dimension of any object in $\A$ is at most $n$,
if and only $\A$ is right $n$-$(\mathcal{P}(\A),\mathcal{P}(\A))$-Gorenstein, if and only if 
$\A$ is left $n$-$(\mathcal{I}(\A),\mathcal{I}(\A))$-Gorenstein, and if and only if 
the two categories respectively consisting of objects with projective and injective dimensions
at most $n$ coincide. (Theorem \ref{thm-3.17}). This result extends \cite[Proposition VII.2.4(i)]{BR}.

In Section 4, we apply the results obtained in Section 3 to module categories. 
The study of generalized tilting modules, which were usually called Wakamatsu tilting modules, 
was initiated by Wakamatsu \cite{W1}. It follows from \cite[Corollary 3.2]{W3} that
${_RC}$ with $S=\End(_RC)$ is Wakamatsu tilting if and only if ${C_S}$ is Wakamatsu tilting with $R=\End(C_S)$,
and if and only if $_RC_S$ is semidualizing bimodule in the sense of \cite{HW}.
Many authors have studied the properties of Wakamatsu tilting modules and semidualizing bimodules
and related modules, see \cite{ATY,BR}, \cite{Fo}, \cite{HJ,HW}, \cite{Hu3}--\cite{HS}, 
\cite{LHX}, \cite{MR}, \cite{TW}--\cite{W3} and references therein.

Let $R$ be an arbitrary ring and $_RC$ a Wakamatsu tilting module with $S=\End(_RC)$.
We use $\mathcal{P}_C(R)$ and $\mathcal{F}_C(R)$ to denote the classes consisting of 
$C$-projective and $C$-flat left $R$-modules, respectively, and use   
$\mathcal{I}_C(S)$ and $\mathcal{FI}_C(S)$  to denote the classes consisting of 
$C$-injective and $C$-FP-injective left $S$-modules, respectively (see Section 2.2 for 
the definitions of these modules).
As an application of Theorem \ref{thm-1.1}, we get the following result.


\begin{thm} \label{thm-1.2}{\rm (Theorem \ref{thm-4.6})}
It holds that
\begin{enumerate}
\item[{\rm (1)}] 
For any $n\ge 0$, the following statements are equivalent.
\begin{enumerate}
\item[{\rm (1.1)}] The category of left $R$-modules is right $n$-$(\mathcal{P}_C(R),\mathcal{P}_C(R))$-Gorenstein.
\item[{\rm (1.2)}] The category of left $R$-modules is right $n$-$(\mathcal{P}_C(R),\mathcal{F}_C(R))$-Gorenstein.
\item[{\rm (1.3)}] The category of left $S$-modules is left $n$-$(\mathcal{I}_C(S),\mathcal{I}_C(S))$-Gorenstein.
\item[{\rm (1.4)}] The category of left $S$-modules is left $n$-$(\mathcal{I}_C(S),\mathcal{FI}_C(S))$-Gorenstein.
\item[{\rm (1.5)}] The $C$-Gorenstein projective dimension of any left $R$-module is at most $n$. 
\item[{\rm (1.6)}] The $C$-strong Gorenstein flat dimension of any left $R$-module is at most $n$.
\item[{\rm (1.7)}] The $C$-Gorenstein injective dimension of any left $S$-module is at most $n$.
\item[{\rm (1.8)}] The $C$-Gorenstein FP-injective dimension of any left $S$-module is at most $n$.
\end{enumerate}
\item[{\rm (2)}] 
If $C$ is faithful, then all the above and below conditions are equivalent.
\begin{enumerate}
\item[{\rm (2.1)}] The two categories respectively consisting of left $R$-modules with injective 
and $\mathcal{P}_C(R)$-projective dimensions at most $n~($and the category consisting of left $R$-modules 
with $\mathcal{F}_C(R)$-projective dimension at most $n)$ coincide.
\item[{\rm (2.2)}] The two categories respectively consisting of left $S$-modules with projective 
and $\mathcal{I}_C(S)$-injective dimensions at most $n~($and the category consisting of left $S$-modules 
with $\mathcal{FI}_C(S)$-injective dimension at most $n)$ coincide.
\end{enumerate}
\item[{\rm (3)}] 
If $R$ is a left Noetherian ring and $S$ is a right coherent ring, then all conditions in {\rm (1)} 
are equivalent to that both the FP-injective dimensions of $_RC$ and $C_S$ are at most $n$.
\end{enumerate}
\end{thm}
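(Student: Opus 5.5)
The plan is to reduce every item of Theorem \ref{thm-1.2} to Theorem \ref{thm-1.1} by choosing $\C$ and $\D$ suitably, and then to link the $R$-side and the $S$-side through Foxby equivalence for the semidualizing bimodule ${_RC_S}$. The needed specializations are as follows. For $\A=\Mod R$, put $\C=\mathcal{P}_C(R)$ and $\D=\mathcal{P}_C(R)$ or $\D=\mathcal{F}_C(R)$. Then $r\mathcal{G}(\C,\D)$ is the class of $C$-Gorenstein projective modules in the first case and of $C$-strongly Gorenstein flat modules in the second. Dually, for $\A=\Mod S$, put $\C=\mathcal{I}_C(S)$ and $\D=\mathcal{I}_C(S)$ or $\D=\mathcal{FI}_C(S)$. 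Then $l\mathcal{G}(\C,\D)$ is the class of $C$-Gorenstein injective modules, respectively $C$-Gorenstein FP-injective modules.

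I would first verify the hypotheses of Theorem \ref{thm-1.1} in these cases. Module categories satisfy AB4 and AB4$^*$ and have enough projectives and injectives. The class $\mathcal{P}_C(R)=\Add{_RC}$ is closed under direct summands. The inclusions $\mathcal{P}_C(R)\subseteq\mathcal{F}_C(R)$ and $\mathcal{P}_C(R)\subseteq{^\perp}\mathcal{F}_C(R)$ follow from $\Ext^{\geq 1}_R(C,C^{(I)})=0$ together with $\Tor$-vanishing and the Wakamatsu tilting condition.

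Two further hypotheses need more care. The first is closure of $\mathcal{P}_C(R)$ under kernels of epimorphisms: an epimorphism $C\otimes P\to C\otimes Q$ with kernel $K$ in the Bass class should split, because $\Ext^1(C\otimes Q,-)$ vanishes on $\mathcal{P}_C$-resolvable objects. I expect this to be handled by an earlier lemma, and I will invoke it. The second is that $\mathcal{P}(R)\subseteq r\mathcal{G}(\C,\D)$, which uses the standard fact that projective modules are $C$-Gorenstein projective. The dual checks on the $S$-side are analogous. Theorem \ref{thm-1.1}(1) then gives (1.1)$\Leftrightarrow$(1.5) and (1.2)$\Leftrightarrow$(1.6), and Theorem \ref{thm-1.1}(2) gives (1.3)$\Leftrightarrow$(1.7) and (1.4)$\Leftrightarrow$(1.8).

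Next I would link the four dimension conditions. The equality between $C$-Gorenstein projective dimension and $C$-strongly Gorenstein flat dimension would come from the known fact that the two classes coincide, or at least have equal finitistic bounds. On the other side, $C$-Gorenstein injective and $C$-Gorenstein FP-injective dimensions should agree in the same way. To cross from $R$ to $S$, I would use the Foxby equivalence $C\otimes_S-:\mathcal{A}_C(S)\rightleftarrows\mathcal{B}_C(R):\Hom_R(C,-)$. Finiteness of these dimensions for all modules forces every module into the Bass class, respectively the Auslander class, and the equivalence swaps $\mathcal{P}_C$-resolutions with $\mathcal{I}_C$-coresolutions. I expect this crossing, and bounding it by exactly $n$, to be the main obstacle. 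My first attempt would be to prove that each global bound implies ${\rm FP\text{-}id}(C)\le n$ on both sides, and that this in turn implies the bound on the other side.

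For (2), when $C$ is faithful the objects of injective dimension at most $n$ should lie in the relevant Bass classes. Theorem \ref{thm-1.1}(1.3) then reduces to the two-category coincidence, because $\D$-projective and $\C$-projective dimensions agree there; (2.2) is handled dually. For (3), $R$ left Noetherian and $S$ right coherent make FP-injective dimension of $C$ computable via $\Ext$ on finitely presented modules. I would show that (1.5) gives $\Ext^{n+1}_R(M,C)=0$ for finitely presented $M$ by dimension shifting along a $C$-Gorenstein projective resolution, and symmetrically on the $S$-side. For the converse I would use the characterization of $C$-Gorenstein objects via totally $C$-reflexive resolutions, presumably by invoking an earlier-cited result in the style of \cite{Hu3}.
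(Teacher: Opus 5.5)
Your top-level reduction, which specializes Theorems~\ref{thm-3.11} and~\ref{thm-3.14} to $\C=\mathcal{P}_C(R)$ with $\D\in\{\mathcal{P}_C(R),\mathcal{F}_C(R)\}$ and to $\C=\mathcal{I}_C(S)$ with $\D\in\{\mathcal{I}_C(S),\mathcal{FI}_C(S)\}$, is what the paper does. However, two load-bearing claims are false. Take $R=k(1\to 2)$ and $C=\Hom_k(R,k)$. This is a classical tilting, hence Wakamatsu tilting, module with $\mathcal{P}_C(R)=\Add_RC=\mathcal{I}(R)$. The epimorphism $P_1=I_2\to I_1=S_1$ has kernel $S_2$, which is not injective. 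So $\mathcal{P}_C(R)$ is \emph{not} closed under kernels of epimorphisms in general. That closure is exactly what faithfulness buys (\cite[Corollary 6.4]{HW}), which is why (2.1)--(2.2) are claimed only for faithful $C$. Hence you cannot invoke Theorem~\ref{thm-1.1} as stated for part (1). You need the unconditional equivalences $(1)\Leftrightarrow(2)\Leftrightarrow(3)$ of Theorems~\ref{thm-3.11}/\ref{thm-3.14}, and their remaining hypotheses are not just ``analogous'' checks. The inclusion $\mathcal{P}(R)\subseteq r\mathcal{G}(\mathcal{P}_C(R),\mathcal{F}_C(R))$ needs Lemma~\ref{lem-2.3} together with $\mathcal{F}_C(R)\subseteq\mathcal{B}_C(R)\subseteq\widetilde{\res\mathcal{P}_C(R)}$. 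The inclusion $\mathcal{I}_C(S)\subseteq\mathcal{FI}_C(S)^{\perp}$ rests on $\mathcal{FI}(R)\subseteq\mathcal{B}_C(R)$ over arbitrary rings, which is Proposition~\ref{prop-4.3}(2), proved via purity. The same example also breaks your $R$-to-$S$ crossing. There $\id\mathcal{P}_C(R)=\mathcal{P}_C(R)$-$\pd\mathcal{I}(R)=0$, so every left $R$-module is $C$-Gorenstein projective. Yet $\Ext^1_R(C,S_2)\cong\Ext^1_R(S_1,S_2)\neq 0$, so ${_RR}\notin\mathcal{B}_C(R)$: a global bound does not push all modules into the Bass class. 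Your fallback through $\FPid$ of $C$ also fails without Noetherian hypotheses. For the von Neumann regular ring of Example~\ref{exa-4.13} with $C=R$, one has $\FPid_RR=\FPid_{R^{op}}R=0$ while $\id_RR\neq 0$, so (1.1) fails for $n=0$.

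Here is what actually links the conditions. The equivalence (1.1)$\Leftrightarrow$(1.2) is immediate from Lemma~\ref{lem-2.7}, namely $\id\mathcal{P}_C(R)=\id\mathcal{F}_C(R)$, since both conditions share $\mathcal{P}_C(R)$-$\pd\mathcal{I}(R)\leq n$. It does not need the two classes of $C$-Gorenstein projective and $C$-strongly Gorenstein flat modules to coincide, which you would have to prove and which is not needed. The passage between $\Mod R$ and $\Mod S$ is \cite[Theorem 5.4(1)]{HS}, i.e.\ (1.5)$\Leftrightarrow$(1.7). The FP-injective side is not a mirror of the flat side. The implication (1.4)$\Rightarrow$(1.3) is trivial, but the converse uses (1.2) as well. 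For $N=M_*$ with $M\in\mathcal{FI}(R)$, one has $M\in\mathcal{B}_C(R)$ and $\mathcal{F}_C(R)$-$\pd M=\fd_SM_*\leq\fd_SE(M)_*\leq\pd_SE(M)_*\leq n$, because ${M_*}^+$ is a summand of ${E(M)_*}^+$. Hence $\id_RM\leq n$ by $\id\mathcal{F}_C(R)\leq n$, so $\mathcal{I}_C(S)$-$\id N=\id_R(C\otimes_SN)\leq n$, and therefore $\pd_SN\leq n$. In (3), your forward direction is fine in outline; for $C_S$ it goes through $\FPid_{S^{op}}C=\fd\mathcal{I}_C(S)=\mathcal{F}_C(R)$-$\pd\mathcal{I}(R)$. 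The converse cannot come from totally $C$-reflexive resolutions, which only see finitely generated modules. It needs Bass's theorem (\cite[Theorem 1.1]{B}) over the left Noetherian ring $R$ to pass from $\id_RC\leq n$ to $\id\Add_RC=\id\mathcal{P}_C(R)\leq n$. It then needs \cite[Lemma 5.19(1)]{HS} to upgrade $\mathcal{F}_C(R)$-$\pd\mathcal{I}(R)=\FPid_{S^{op}}C\leq n$ to $\mathcal{P}_C(R)$-$\pd\mathcal{I}(R)\leq n$.
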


Note that the conditions in Theorem \ref{thm-1.2}(1) are not left-right symmetric in general (Remark \ref{rem-4.17}(1)),
and that the assumption ``$R$ is a left Noetherian ring and $S$ is a right coherent ring" 
in Theorem \ref{thm-1.2}(3) can not be weakened to ``$R$ is a left coherent ring and $S$ is a right coherent ring"
(Example \ref{exa-4.13}).

By Theorem \ref{thm-1.2}, we obtain several equivalent characterizations 
for the category of left $R$-modules and the ring $R$ to be $n$-Gorenstein (Corollary \ref{cor-4.7}).
As an immediate consequence of Theorem \ref{thm-1.2}, we also get that the suprema
of $C$-Gorenstein projective and $C$-strong Gorenstein flat dimensions of all left $R$-modules
and the suprema of $C$-Gorenstein injective and $C$-Gorenstein FP-injective dimensions
of all left $S$-modules are identical (Corollary \ref{cor-4.8}(1)). It extends some results in 
\cite{BM,HS,HMP,Wa}.

The Auslander class $\mathcal{A}_C(S)$ and Bass class $\mathcal{B}_C(R)$ with respect to 
a semidualizing bimodule ${_RC_S}$ (equivalently, a Wakamatsu tilting module $_RC$ with $S=\End(_RC)$)
originated from \cite{Fo} are now collectively known as Foxby classes,
and they are linked together by Foxby equivalence \cite{Fo,HW}
and duality pairs \cite{Hu4}. These two classes of modules play a
crucial role in studying the homological theory related to semidualizing bimodules and Wakamatsu tilting modules,
see \cite{ATY,Fo,HJ,HW}, \cite{Hu3}--\cite{HS}, \cite{LHX,MR}, \cite{TW}--\cite{W3} and references therein. 

We use $\pd_RC$ and $\pd_{S^{op}}C$
to denote the projective dimensions of $_RC$ and $C_S$, respectively.
The Wakamatsu tilting conjecture states that for artin algebras $R$ and $S$, it holds that
$\pd_RC=\pd_{S^{op}}C$ (cf. \cite{BR, MR}). This conjecture remains still open.
It was proved in \cite[Proposition 4.1]{TH3} that $\pd_RC=\pd_{S^{op}}C$ when both of them are finite,
which was proved in \cite[Proposition 7]{W1} when $R$ and $S$ are artin algebras.
As another application of Theorem \ref{thm-1.1}, we get the the following result, which greatly improves 
\cite[Theorems 4.2 and 4.10]{TH3}. Compare it with Theorem \ref{thm-1.2}.

\begin{thm} \label{thm-1.3} {\rm (Part of Theorem \ref{thm-4.15})}
For any $n\ge 0$, the following statements are equivalent.
\begin{enumerate}
\item[{\rm (1)}] $\pd_RC=\pd_{S^{op}}C\leq n$.
\item[{\rm (2)}] The category of left $R$-modules is left $n$-$(\mathcal{P}_C(R),\mathcal{P}_C(R))$-Gorenstein.
\item[{\rm (3)}] The category of left $S$-modules is right $n$-$(\mathcal{I}_C(S),\mathcal{I}_C(S))$-Gorenstein.
\item[{\rm (4)}] The $\mathcal{B}_C(R)$-injective dimension of any left $R$-module is at most $n$.
\item[{\rm (5)}] The $\mathcal{A}_C(S)$-projective dimension of any left $S$-module is at most $n$.
\item[$(i)^{op}$] The symmetric version of $(i)$ with $2\leq i\leq 5$.
\end{enumerate}
\end{thm}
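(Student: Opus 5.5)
Since this is only part of Theorem \ref{thm-4.15}, the plan is to derive it from Theorem \ref{thm-1.1}, together with Foxby equivalence and a few standard facts on Auslander and Bass classes for a semidualizing bimodule ${_RC_S}$. The relevant definitions (to be recalled in Section 3) are as follows. By definition, $\A$ being left $n$-$(\C,\D)$-Gorenstein with $\C=\D=\mathcal{P}_C(R)$ should unwind to the condition that every left $R$-module has $l\mathcal{G}(\mathcal{P}_C(R),\mathcal{P}_C(R))$-injective dimension at most $n$. The left Gorenstein subcategory relative to $\mathcal{P}_C(R)$, with injectives as the other end, is expected to be exactly $\mathcal{B}_C(R)$. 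This identification will come from the characterization of the Bass class as the modules $M$ admitting a $\Hom_R(\mathcal{P}_C,-)$-exact left $\mathcal{P}_C(R)$-resolution and having $\Ext^{\geq 1}_R(C,M)=0$. It also uses that $\mathcal{B}_C(R)$ contains all injectives and is closed under cokernels of monomorphisms.

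With this identification, (2)$\Leftrightarrow$(4) is Theorem \ref{thm-1.1}(2), (2.1)$\Leftrightarrow$(2.2). To apply it I must check the hypotheses. First, $\mathcal{P}_C(R)=\Add({_RC})$ is closed under direct summands. Second, $\mathcal{P}_C(R)\subseteq \mathcal{P}_C(R)^{\perp}$, since $\Ext^{\geq1}_R(C,C^{(I)})=0$ because $C$ is semidualizing; this uses that $C$ admits a degreewise finitely generated projective resolution, so $\Ext$ commutes with direct sums. Third, $\mathcal{B}_C(R)\supseteq\mathcal{I}(R)$. The remaining hypothesis, closure of $\mathcal{P}_C(R)$ under cokernels of monomorphisms, is not literally true. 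So I expect the paper's actual Theorem \ref{thm-3.14} to need only a weaker closure property, or else to apply it with $\C$ replaced by the resolving-type closure. I would first try the observation that $\mathcal{B}_C(R)$-injective dimension can be computed via $\mathcal{P}_C$-coresolutions, and bypass that hypothesis. Dually, (3)$\Leftrightarrow$(5) follows from Theorem \ref{thm-1.1}(1) over $S$ with $\C=\D=\mathcal{I}_C(S)$, the right Gorenstein subcategory being $\mathcal{A}_C(S)$.

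The key homological step is (1)$\Leftrightarrow$(4), which I would prove by the following chain.
\begin{enumerate}
\item[(a)] $\mathcal{B}_C(R)$-injective dimension $\leq n$ for all modules means every module $M$ has an exact sequence $0\to M\to B^0\to\cdots\to B^n\to0$ with $B^i\in\mathcal{B}_C(R)$.
\item[(b)] Apply this to $M=\Hom$-duals, or rather use that $\Ext^{\geq1}_R(C,B)=0$ for $B\in\mathcal{B}_C(R)$. Dimension shifting then gives $\Ext^{n+1}_R(C,M)=0$ for all $M$, that is, $\pd_RC\leq n$.
\item[(c)] Conversely, if $\pd_RC\leq n$, then every $M$ has $\Ext^{>n}_R(C,M)=0$. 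Taking an injective coresolution, its $n$-th cosyzygy $K$ satisfies $\Ext^{\geq1}_R(C,K)=0$. The standard result that $\pd_RC<\infty$ forces $\mathcal{B}_C(R)=C^{\perp}$ (cf. \cite{TH3}) then gives $K\in\mathcal{B}_C(R)$.
\end{enumerate}
This shows (4)$\Leftrightarrow\pd_RC\leq n$. Symmetrically, (5)$\Leftrightarrow\pd_{S^{op}}C\leq n$, using $\mathcal{A}_C(S)={}^{\perp}(C^+)$-type descriptions, or Foxby equivalence $C\otimes_S-:\mathcal{A}_C(S)\simeq\mathcal{B}_C(R)$.

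Finally, to obtain the equality $\pd_RC=\pd_{S^{op}}C$ rather than two separate bounds, I invoke \cite[Proposition 4.1]{TH3}: if both dimensions are finite, they are equal. It then remains to show that (4) implies finiteness of $\pd_{S^{op}}C$, and here I expect the main obstacle. I would try to transfer through Foxby equivalence: given an $S$-module $N$, apply (4) to $C\otimes_SN$, pull the $\mathcal{B}_C$-coresolution back via $\Hom_R(C,-)$ into $\mathcal{A}_C(S)$-resolutions, and thereby deduce (5) from (4). The symmetric items $(i)^{op}$ then follow by interchanging the roles of ${_RC}$ and $C_S$, since (1) is symmetric.
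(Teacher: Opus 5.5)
There is a genuine gap, and it lies where you route everything through (4) and (5).

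In step (c) you use, as a ``standard result'', that $\pd_RC<\infty$ alone forces $\mathcal{B}_C(R)=C^{\perp}$. This is not known, and proving it would settle an open problem. It would give $\mathcal{B}_C(R)$-$\id R\leq\pd_RC$, because the $\pd_RC$-th cosyzygy of $R$ lies in $C^{\perp}$. But $\mathcal{B}_C(R)$-$\id R=\pd_{S^{op}}C$ by Lemma \ref{lem-4.14}(2). So $\pd_RC<\infty$ would imply $\pd_{S^{op}}C<\infty$ for every Wakamatsu tilting module, and applying the same to $C_S$ would prove the Wakamatsu tilting conjecture.

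Hence your one-sided equivalences ``(4)$\Leftrightarrow\pd_RC\leq n$'' and ``(5)$\Leftrightarrow\pd_{S^{op}}C\leq n$'' are not available: each of (4) and (5) encodes \emph{both} bounds. When both dimensions are finite (the tilting case) one does have $\mathcal{B}_C(R)=C^{\perp}$, so (c) can at most serve for (1)$\Rightarrow$(4).

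The converse direction then stays open in your plan, and the Foxby transfer does not close it. Applying $\Hom_R(C,-)$ to a $\mathcal{B}_C(R)$-coresolution of $C\otimes_SN$ stays exact only if $\Ext^{\geq1}_R(C,C\otimes_SN)=0$. Even then it produces an $\mathcal{A}_C(S)$-\emph{co}resolution of $(C\otimes_SN)_*$, not an $\mathcal{A}_C(S)$-resolution of $N$. Foxby equivalence gives no control over modules outside $\mathcal{A}_C(S)$ and $\mathcal{B}_C(R)$.

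The missing ingredient is Lemma \ref{lem-4.14}, which rewrites every relevant relative dimension in terms of the two projective dimensions of $C$:
on one side $\pd\mathcal{P}_C(R)=\mathcal{I}_C(S)$-$\pd\mathcal{I}(S)=\mathcal{A}_C(S)$-$\pd S^+=\pd_RC$;
on the other side $\mathcal{P}_C(R)$-$\id\mathcal{P}(R)=\mathcal{B}_C(R)$-$\id R=\id\mathcal{I}_C(S)=\pd_{S^{op}}C$.

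You have also misread Definition \ref{def-3.8}. Condition (2) means $\mathcal{P}_C(R)$-$\id\mathcal{P}(R)\leq n$ and $\pd\mathcal{P}_C(R)\leq n$. Condition (3) means $\mathcal{I}_C(S)$-$\pd\mathcal{I}(S)\leq n$ and $\id\mathcal{I}_C(S)\leq n$. The bound on all $\mathcal{B}_C(R)$-injective dimensions is a theorem, not the definition.

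With the lemma, each of (2) and (3) says exactly that $\pd_RC\leq n$ and $\pd_{S^{op}}C\leq n$, which is (1) by \cite[Proposition 4.1]{TH3}. Then (2)$\Leftrightarrow$(4) and (3)$\Leftrightarrow$(5) are the parts (1)$\Leftrightarrow$(2) of Theorems \ref{thm-3.14} and \ref{thm-3.11}, with $\C=\D=\mathcal{P}_C(R)$ and $\C=\D=\mathcal{I}_C(S)$ respectively. These use $\mathcal{B}_C(R)=l\mathcal{G}(\mathcal{P}_C(R))$ and $\mathcal{A}_C(S)=r\mathcal{G}(\mathcal{I}_C(S))$. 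Those parts need no closure hypothesis on $\C$, so your concern about cokernels of monomorphisms was harmless.

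Your step (b) is fine. The ``main obstacle'' (4)$\Rightarrow\pd_{S^{op}}C\leq n$ is just (4) applied to $M=R$, together with $\mathcal{B}_C(R)$-$\id R=\pd_{S^{op}}C$.
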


As an immediate consequence, we get that the supremum of $\mathcal{A}_C(S)$-projective dimensions 
of all left $S$-modules and that of $\mathcal{B}_C(R)$-injective dimensions of all left $R$-modules are identical,
and they are left and right symmetric (Corollary \ref{cor-4.16}).
Over coherent semi-local rings, we establish the relation between the projective dimension of $C$
and the Bass injective dimension as well as the Auslander projective dimension of certain
semisimple module (Proposition \ref{prop-4.19}).
Then we obtain a necessary condition for the validity of the Wakamatsu tilting
conjecture (Corollary \ref{cor-4.20}).

\section {\bf Preliminaries}

In this section, we give some notions and preliminary results needed in the sequel.

\subsection{Relative homological dimensions and one-sided Gorenstein subcategories}

In this paper, $\A$ is an abelian category and all subcategories of $\A$ involved are full, 
and closed under isomorphisms. We use $\mathcal{P}(\A)$ and $\mathcal{I}(\A)$ to denote the subcategories 
of $\A$ consisting of projective and injective objects, respectively.

Let $\mathscr{X}$ be a subcategory of $\A$. We write
$${^\perp{\mathscr{X}}}:=\{A\in\A\mid\operatorname{Ext}^{\geq 1}_{\A}(A,X)=0 \mbox{ for any}\ X\in \mathscr{X}\},$$
$${{\mathscr{X}}^\perp}:=\{A\in\A\mid\operatorname{Ext}^{\geq 1}_{\A}(X,A)=0 \mbox{ for any}\ X\in \mathscr{X}\}.$$
The subcategory $\X$ is called {\it self-orthogonal} if $\X\subseteq{^\perp{\mathscr{X}}}$ 
(equivalently, $\X\subseteq{{\mathscr{X}}^\perp}$).
A sequence $\mathbb{E}$ in $\A$ is called {\it $\Hom_{\A}(\mathscr{X},-)$-exact}
(resp. {\it $\Hom_{\A}(-,\mathscr{X})$-exact}) if
$\Hom_{\A}(X,\mathbb{E})$ (resp. $\Hom_{\A}(\mathbb{E},X)$) is exact for any $X\in \mathscr{X}$.

Let $A\in\A$. The \textit{$\mathscr{X}$-projective dimension} 
$\mathscr{X}$-$\pd A$ of $A$ is defined as
$$\inf\{n\mid \text{there exists an exact sequence}\ 0 \to X_n \to \cdots \to X_1\to X_0 \to A\to 0$$
$$\text{in}\ \A \ \text{with all}\ X_i\ \text{in}\ \mathscr{X}\},$$
and set $\mathscr{X}$-$\pd A=\infty$ if no such integer exists.
Dually, the \textit{$\mathscr{X}$-injective dimension} $\mathscr{X}$-$\id A$ of $A$ is defined as
$$\inf\{n\mid \text{there exists an exact sequence}\ 0 \to A\to X^0\to X^1\to\cdots \to X^n\to 0$$
$$\text{in}\ \A \ \text{with all}\ X^i\ \text{in}\ \mathscr{X}\},$$
and set $\mathscr{X}$-$\id A=\infty$ if no such integer exists. For simplicity, we write
$$\pd A:=\mathcal{P}(\A)\text{-}\pd A\ \ \text{and}\ \ \id A:=\mathcal{I}(\A)\text{-}\id A.$$
For any $n\geq 0$, We use $\mathscr{X}\mbox{-}\pd^{\leq n}$ and $\mathscr{X}\mbox{-}\id^{\leq n}$
to denote the subcategories of $\A$ consisting of objects with $\mathscr{X}$-projective and
$\mathscr{X}$-injective dimensions at most $n$, respectively; and use 
$\mathscr{X}\mbox{-}\pd^{<\infty}$ and $\mathscr{X}\mbox{-}\id^{<\infty}$
to denote the subcategories of $\A$ consisting of objects with finite $\mathscr{X}$-projective and
$\mathscr{X}$-injective dimensions, respectively. For a subcategory $\Y$ of $\A$, we write
$$\mathscr{X}\mbox{-}\pd\Y:=\sup\{\mathscr{X}\mbox{-}\pd Y\mid Y\in\Y\}\ \ \text{and}\ \ 
\mathscr{X}\mbox{-}\id\Y:=\sup\{\mathscr{X}\mbox{-}\id Y\mid Y\in\Y\}.$$
Recall that $\X$ is called {\it resolving} in $\A$ if $\mathcal{P}(\A)\subseteq\X$ and 
$\X$ is closed under extensions and kernels of epimorphisms; dually, 
$\X$ is called {\it coresolving} in $\A$ if $\mathcal{I}(\A)\subseteq\X$ and 
$\X$ is closed under extensions and cokernels of monomorphisms.

\begin{lem}\label{lem-2.1}
Let $\X$ be a subcategory of $\mathscr{A}$ closed under direct summands.
\begin{enumerate}
\item[$(1)$] If $\X$ is resolving in $\mathscr{A}$,
then $\X$-$\pd^{\leq n}$ is closed under direct summands for any $n\geq 0$.
\item[$(2)$] If $\X$ is coresolving in $\mathscr{A}$,
then $\X$-$\id^{\leq n}$ is closed under direct summands for any $n\geq 0$.
\end{enumerate}
\end{lem}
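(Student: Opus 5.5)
We prove (1); statement (2) is its dual and follows by reversing arrows and exchanging projectives with injectives. Let $A\oplus B\in\X$-$\pd^{\leq n}$. We argue by induction on $n$, and the key tool is the dimension-shifting fact for resolving subcategories: if $\X$ is resolving and $X$-$\pd M\leq n$, then for every exact sequence $0\to K\to X_{n-1}\to\cdots\to X_0\to M\to 0$ with $X_i\in\X$ we have $K\in\X$. This is a standard Auslander--Buchweitz type statement. I would prove it by comparing two resolutions via a pullback (a Schanuel-type argument), using closure of $\X$ under extensions and kernels of epimorphisms. The one-step version reads: given $0\to K\to X\to M\to 0$ and $0\to K'\to X'\to M\to 0$ with $X,X'\in\X$, form the pullback $E$ of $X\to M\leftarrow X'$. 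We get exact sequences $0\to K'\to E\to X\to 0$ and $0\to K\to E\to X'\to 0$. Hence $K\in\X$ implies $E\in\X$ (extension closure, given $K'\in\X$ as well) and then $K'\in\X$ (kernel of epi). Iterating this gives the $n$-step version, with the base case handled by the hypothesis.

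For the case $n=0$, $A\oplus B\in\X$ and $\X$ is closed under direct summands, so $A\in\X$. For $n\geq 1$ we use that $\mathcal{P}(\A)\subseteq\X$ and $\A$ has enough projectives. Take projective resolutions of $A$ and $B$ and truncate at step $n-1$:
$$0\to K_A\to P_{n-1}\to\cdots\to P_0\to A\to 0$$
and similarly for $B$. Their direct sum is an exact sequence of length $n$ ending in $A\oplus B$ with projective, hence $\X$-, terms $P_i\oplus Q_i$. By the dimension-shifting fact, $K_A\oplus K_B\in\X$. Since $\X$ is closed under direct summands, $K_A\in\X$, so the truncated resolution exhibits $\X$-$\pd A\leq n$. (Enough projectives is a standing assumption used in the paper; if one wants to avoid it, one can instead use the one-step pullback argument inductively.)

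The main obstacle is the dimension-shifting lemma. Its clean proof is an induction on $n$: one needs a syzygy of $M$ whose $\X$-dimension is at most $n-1$, and one needs to show that arbitrary $\X$-syzygies, not just projective ones, land in $\X$. I would first show, via the pullback above, that if $0\to K\to X\to M\to0$ and $0\to K'\to P\to M\to0$ with $P$ projective, then $K\oplus P\cong K'\oplus X$ up to an extension. Then I would apply the induction hypothesis to $K'$ and $K$ simultaneously, using closure under extensions and kernels of epimorphisms at each step.
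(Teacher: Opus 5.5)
Your argument is correct, but it takes a different route from the paper. The paper gives no direct argument. It observes that a resolving $\X$ is $\mathcal{P}(\A)$-resolving with $\mathcal{P}(\A)$ as a $\mathcal{P}(\A)$-proper generator, and then quotes Huang's general results on proper resolution dimensions [Hu2, Theorems 3.9 and 4.9]. Those results cover statement (2) in the same breath and work in the broader relative setting of $\mathscr{E}$-proper resolutions.

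Your proof is elementary and self-contained:
\begin{itemize}
\item sum truncated projective resolutions of $A$ and $B$;
\item use the syzygy lemma to get $K_A\oplus K_B\in\X$;
\item use closure of $\X$ under direct summands, which enters exactly once.
\end{itemize}
This makes visible that the only inputs are $\mathcal{P}(\A)\subseteq\X$ with enough projectives, closure under extensions and kernels of epimorphisms, and summand-closure of $\X$. Relying on enough projectives is harmless. The lemma is only applied where $\A$ has enough projectives and injectives, and the paper's own identification of $\mathcal{P}(\A)$ as a proper generator for $\X$ presupposes that objects of $\X$ are quotients of projectives. You should drop the parenthetical claim that this can be avoided by iterating the one-step pullback; nothing you wrote supports it.

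Your sketch of the syzygy lemma needs tidying.
\begin{itemize}
\item In the one-step version, $E\in\X$ follows from $0\to K\to E\to X'\to 0$ because $K,X'\in\X$. The phrase ``given $K'\in\X$ as well'' makes the argument circular.
\item Iterating the one-step pullback does not by itself give the $n$-step version. The intermediate syzygies need not lie in $\X$, and without a projective term the pullback need not split.
\end{itemize}
The clean proof is the comparison you allude to at the end. Lift $1_M$ to a chain map from a projective resolution of $M$ to the given $\X$-resolution. Its mapping cone yields an exact sequence
$$0\to\Omega^nM\to K\oplus P_{n-1}\to X_{n-1}\oplus P_{n-2}\to\cdots\to X_1\oplus P_0\to X_0\to 0.$$
Closure under kernels of epimorphisms then gives $0\to\Omega^nM\to K\oplus P_{n-1}\to L\to 0$ with $L\in\X$. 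Hence $K\in\X$ if and only if $\Omega^nM\in\X$; for the reverse direction, note that $K$ is the kernel of $K\oplus P_{n-1}\to P_{n-1}$. For your application you only need the direction $X_n\in\X\Rightarrow\Omega^n(A\oplus B)\cong K_A\oplus K_B\in\X$. That direction uses only extension closure and closure under kernels of epimorphisms.
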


\begin{proof}
If $\X$ is resolving (respectively, coresolving), then $\X$ is $\mathcal{P}(\mathscr{A})$-resolving
admitting a $\mathcal{P}(\mathscr{A})$-proper generator $\mathcal{P}(\mathscr{A})$ (respectively, $\mathcal{I}(\mathscr{A})$-coresolving
admitting an $\mathcal{I}(\mathscr{A})$-coproper cogenerator $\mathcal{I}(\mathscr{A})$) in the sense of \cite{Hu2}. Now the
assertions follow from \cite[Theorems 3.9 and 4.9]{Hu2}, respectively.
\end{proof}

\begin{df} \label{def-2.2}
{\rm (\cite{EJ})
Let $\X$ be a subcategory of $\A$. A morphism $f: X\to A$ in
$\A$ with $X\in\X$ and $A\in \A$ is called an {\it $\mathscr{X}$-precover} of $A$
if $\Hom_{\A}(X',f)$ is epic for any $X'\in\X$. 
The subcategory $\mathscr{X}$ is called {\it precovering} in $\A$ if any object
in $\mathscr{A}$ admits an $\mathscr{X}$-precover. Dually, the notions of an {\it $\mathscr{X}$-preenvelope}
and a {\it preenveloping subcategory} are defined.}
\end{df}

For subcategories $\mathscr{X}$ and $\mathscr{Y}$ of $\A$, we write
$$\widetilde{\res_{\mathscr{Y}}{\mathscr{X}}}:=\left\{A\in\A\mid  \begin{array}{c}
\mbox{there exists a $\Hom_{\A}(\mathscr{Y},-)$-exact exact sequence} \\
\cdots\to X_i \to \cdots \to X_1 \to X_0 \to A \to 0 \mbox{ in $\A$ with all $X_i$ in $\mathscr{X}$}
\end{array}\right\},$$
and
$$\widetilde{\cores_{\mathscr{Y}}{\mathscr{X}}}:=\left\{A\in\A\mid  \begin{array}{c}
\mbox{there exists a $\Hom_{\A}(-,\mathscr{Y})$-exact exact sequence} \\
0\to A \to X^0 \to X^1 \to \cdots \to X^i \to \cdots \mbox{ in $\A$ with all $X_i$ in $\mathscr{X}$}
\end{array}\right\}.$$
For simplicity, we write
$$\widetilde{\cores{\mathscr{X}}}:=\widetilde{\cores_{\mathscr{X}}{\mathscr{X}}}\ \ 
\text{and}\ \ \widetilde{\cores{\mathscr{X}}}:=\widetilde{\cores_{\mathscr{X}}{\mathscr{X}}}.$$

\begin{lem}\label{lem-2.3} 
Let $\X$ be a subcategory of $\A$. Assume that $M\in\widetilde{\cores\X}$ and $N\in\widetilde{\res\X}$, 
that is, there exist a $\Hom_\A(-,\X)$-exact exact sequence
$$0\to M\buildrel{f}\over\longrightarrow X^0\buildrel{f^0}\over\longrightarrow X^1
\buildrel{f^1}\over\longrightarrow\cdots\eqno{(2.1)}$$
and a $\Hom_\A(\X,-)$-exact exact sequence
$$\cdots\buildrel{g_1}\over\longrightarrow X_1\buildrel{g_0}\over\longrightarrow X_0
\buildrel{g}\over\longrightarrow N\to 0\eqno{(2.2)}$$
in $\A$ with all $X^i$ and $X_i$ in $\X$. 
Then $(2.1)$ is $\Hom_\A(-,N)$-exact if and only if $(2.2)$ is $\Hom_\A(M,-)$-exact. 
\end{lem}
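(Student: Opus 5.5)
Both conditions will be compared with a single double complex, built from the two sequences and the group $\Hom_\A(X_j,X^i)$. Write $\mathbb{X}^\bullet$ for the deleted coresolution $X^0\to X^1\to\cdots$ and $\mathbb{X}_\bullet$ for the deleted resolution $\cdots\to X_1\to X_0$. Set $K^{i,j}:=\Hom_\A(X_j,X^i)$ for $i,j\geq 0$, with the obvious differentials induced by $f^i$ and $g_j$. I will augment this double complex in two directions:
\begin{itemize}
\item by the column $\Hom_\A(N,X^i)$, i.e.\ the complex $\Hom_\A(N,\mathbb{X}^\bullet)$;
\item by the row $\Hom_\A(X_j,M)$, i.e.\ the complex $\Hom_\A(\mathbb{X}_\bullet,M)$.
\end{itemize}
The corner term is $\Hom_\A(N,M)$.

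First I record the exactness that comes for free from the hypotheses. Since $(2.1)$ is $\Hom_\A(-,\X)$-exact, each row
$$0\to \Hom_\A(N\text{ or }X_j\text{ replaced appropriately})\ \cdots$$
needs care, so let me be precise. For each fixed $j$, the sequence
$$0\to\Hom_\A(X_j,M)\to\Hom_\A(X_j,X^0)\to\Hom_\A(X_j,X^1)\to\cdots$$
is exact. This uses only left exactness of $\Hom$ for the first two spots and $\Hom_\A(\X,-)$-exactness of $(2.1)$ beyond that. Strictly, we are given $\Hom_\A(-,\X)$-exactness of $(2.1)$; so I must check which exactness is actually available, and I would arrange the double complex accordingly. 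Dually, for each fixed $i$, the sequence
$$0\to\Hom_\A(N,X^i)\to\Hom_\A(X_0,X^i)\to\Hom_\A(X_1,X^i)\to\cdots$$
is exact, because $X^i\in\X$ and $(2.2)$ is $\Hom_\A(\X,-)$-exact.

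So one genuine family of exact lines (the columns indexed by $i$, augmented by $\Hom_\A(N,X^i)$) is guaranteed. The lines in the other direction are exact exactly when $(2.1)$ is $\Hom_\A(X_j,-)$-exact. That holds because $X_j\in\X$ and $(2.1)$ is $\Hom_\A(-,\X)$-exact \emph{applied covariantly}? No: $\Hom_\A(-,\X)$-exactness of $(2.1)$ means exactness of $\Hom_\A((2.1),X)$.

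The correct pairing is therefore the following. Form $\Hom_\A(\mathbb{X}_\bullet,\mathbb{X}^\bullet)$ again, but now read each exactness in its native variable:
\begin{itemize}
\item $\Hom_\A(X_j,-)$ applied to $(2.1)$ is not given by hypothesis.
\item $\Hom_\A(-,X^i)$ applied to $(2.2)$ is not given either.
\end{itemize}
What is given is exactness of $\Hom_\A((2.1),X)$ and of $\Hom_\A(X,(2.2))$. Taking $X=X_j$ and $X=X^i$ respectively, the natural object is $\Hom_\A(X^i,X_j)$. So I set $K^{i,j}:=\Hom_\A(X^i,X_j)$, augmented by:
\begin{itemize}
\item the rows $\Hom_\A(M,X_j)$, exact in $i$ by $\Hom_\A(-,\X)$-exactness of $(2.1)$ with $X=X_j$;
\item the columns $\Hom_\A(X^i,N)$, exact in $j$ by $\Hom_\A(\X,-)$-exactness of $(2.2)$ with $X=X^i$;
\item the corner term $\Hom_\A(M,N)$.
\end{itemize}
Then $\Hom_\A((2.1),N)$ is the augmenting column $\Hom_\A(X^\bullet,N)$ together with $\Hom_\A(M,N)$, and $\Hom_\A(M,(2.2))$ is the augmenting row $\Hom_\A(M,X_\bullet)$ together with $\Hom_\A(M,N)$.

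Next comes the key step: a standard acyclic-assembly (staircase diagram chase) in this first-quadrant-type bicomplex. The bicomplex is cohomological in $i$ and homological in $j$, so after reindexing it is a third-quadrant bicomplex with all interior rows and columns exact after augmentation. The statement to prove is that the augmenting column $0\to\Hom_\A(M,N)\to\Hom_\A(X^0,N)\to\cdots$ is exact if and only if the augmenting row is exact. To do this, I will show that the cohomology of each at position $k$ is isomorphic to the same cohomology of the total complex of $K$.

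For the boundedness, $\Hom_\A(X^i,X_j)$ is an unbounded grid, but the staircase argument only needs finitely many terms for each degree. An element in $\Hom_\A(X^k,N)$ that is a cocycle lifts along the surjections $\Hom_\A(X^k,X_0)\to\Hom_\A(X^k,N)$ and zig-zags through $k+1$ steps, landing in $\Hom_\A(M,X_{k})$-type terms. Each zig-zag step uses exactness of an interior row or column. This yields mutually inverse isomorphisms between the cohomologies of the two augmenting complexes, degree by degree, including degree $0$ (the injectivity/surjectivity of the maps out of $\Hom_\A(M,N)$).

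I expect the main obstacle to be the bookkeeping of this staircase chase, in particular the lowest degrees where the corner $\Hom_\A(M,N)$ enters. Exactness there means injectivity of $\Hom_\A(f,N)$ and $\Hom_\A(M,g)$, which needs separate handling: $\Hom_\A(f,N)$ is injective because $g$ is epi and a map $M\to N$ factoring through a lift to $X_0$ extends along $f$. I would handle those cases by direct chase and the higher degrees uniformly. Alternatively, I would cite the classical lemma that for a double complex with exact augmented rows and columns, the two augmentation complexes have isomorphic cohomology.
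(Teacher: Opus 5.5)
Your final construction is the paper's proof. The paper draws the double complex $\Hom_\A(X^i,X_j)$ bordered by $\Hom_\A(M,X_j)$, $\Hom_\A(X^i,N)$ and the corner $\Hom_\A(M,N)$. It notes that all rows except $\Hom_\A((2.1),N)$ and all columns except $\Hom_\A(M,(2.2))$ are exact, for exactly the reasons you give, and then concludes in one line. You should drop the abandoned first attempt with $\Hom_\A(X_j,X^i)$, since its exactness claims are false.

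The genuine problem is your treatment of the corner: you have reversed the arrows of the contravariant functor. Both differentials of $\Hom_\A(X^i,X_j)$ lower the indices, so the bicomplex is homological in $i$ as well as in $j$. The two sequences to compare are
\[
\cdots\to\Hom_\A(X^0,N)\xrightarrow{\Hom_\A(f,N)}\Hom_\A(M,N)\to 0
\quad\text{and}\quad
\cdots\to\Hom_\A(M,X_0)\xrightarrow{\Hom_\A(M,g)}\Hom_\A(M,N)\to 0,
\]
not $0\to\Hom_\A(M,N)\to\Hom_\A(X^0,N)\to\cdots$. So exactness at the corner means \emph{surjectivity} of $\Hom_\A(f,N)$ and of $\Hom_\A(M,g)$. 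Injectivity is neither needed nor true in general.

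Also, ``$g$ is epi'' does not let you lift an arbitrary $\phi\colon M\to N$ through $g$, because $M$ need not be projective. That lifting property is precisely exactness of $\Hom_\A(M,(2.2))$ at the corner, so it cannot be taken for granted. The correct degree-zero step is a two-way implication:
\begin{itemize}
\item If $\Hom_\A(M,g)$ is onto, write $\phi=g\psi$ and extend $\psi$ along $f$, using that (2.1) is $\Hom_\A(-,X_0)$-exact.
\item If $\Hom_\A(f,N)$ is onto, write $\phi=\tilde\phi f$ and lift $\tilde\phi$ through $g$, using that (2.2) is $\Hom_\A(X^0,-)$-exact.
\end{itemize}

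For the same reason, your fallback does not suffice on its own. Knowing that the deleted complexes $\Hom_\A(\mathbb{X}^\bullet,N)$ and $\Hom_\A(M,\mathbb{X}_\bullet)$ have isomorphic homology does not control exactness at $\Hom_\A(X^0,N)$, at $\Hom_\A(M,X_0)$, or at the corner.

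A clean way to handle every degree at once is to let $T$ be the total complex of the whole bordered square, corner included.
\begin{itemize}
\item Up to a shift of degrees, $\Hom_\A((2.1),N)$ is a subcomplex of $T$. The quotient is the total complex of the exact rows $\cdots\to\Hom_\A(X^0,X_j)\to\Hom_\A(M,X_j)\to 0$ for $j\geq 0$. This quotient is acyclic because everything lives in a shifted first quadrant, so $H(\Hom_\A((2.1),N))\cong H(T)$.
\item Symmetrically, using the exact columns, $H(\Hom_\A(M,(2.2)))\cong H(T)$.
\end{itemize}
Hence the two sequences are exact or not together.
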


\begin{proof}
From (2.1) and (2.2), we get the following commutative diagram: 
$$\xymatrix{
& \vdots\ar[d]^{(X^1,g_1)} & \vdots \ar[d]^{(X^,g_1)}  & \vdots \ar[d]^{(M,g_1)}  & \\
\cdots \ar[r]^{(f^1,X_1)\ \ \ } & (X^1,X_1) \ar[r]^{(f^0,X_1)}\ar[d]^{(X^1,g_0)} & (X^0,X_1) 
\ar[d]^{(X^0,g_0)} \ar[r]^{(f,X_1)} & (M,X_1) \ar[d]^{(M,g_0)} \ar[r] & 0\\
\cdots \ar[r]^{(f^1,X_0)\ \ \ } & (X^1,X_0) \ar[r]^{(f^0,X_0)}\ar[d]^{(X^1,g)} & (X^0,X_0) 
\ar[d]^{(X^0,g)} \ar[r]^{(f,X_0)} & (M,X_0) \ar[d]^{(M,g)} \ar[r] & 0\\
\cdots \ar[r]^{(f^1,N)\ \ \ } & (X^1,N) \ar[r]^{(f^0,N)}\ar[d] & (X^0,N) 
\ar[d] \ar[r]^{(f,N)} & (M,N) \ar[d] \ar[r] & 0\\
& 0 & 0 & 0, & & }$$
where $(-,-)=\Hom_\A(-,-)$. By assumption, all rows but the bottom one and all columns but the rightmost one 
are exact. Thus the bottom row is exact if and only if so is the rightmost column, that is, 
$(2.1)$ is $\Hom_\A(-,N)$-exact if and only if $(2.2)$ is $\Hom_\A(M,-)$-exact. 
\end{proof}

\begin{df}\label{def-2.4} {\rm  (\cite{SSW})
Let $\C$ be a subcategory of $\A$. The {\it Gorenstein subcategory}
$\mathcal{G}(\mathscr{C})$ of $\mathscr{A}$ relative to $\C$ is defined to be $\{G\in\A\mid$
there exists an exact sequence
$$\cdots \to C_1 \to C_0 \to C^0 \to C^1 \to \cdots$$ 
in $\A$ with all $C_i,C^i$ in $\C$, which is $\Hom_{\mathscr{A}}(\mathscr{C},-)$-exact and
$\Hom_{\mathscr{A}}(-,\mathscr{C})$-exact, such that $G\cong \Im(C_0\to C^0)\}$.}
\end{df}

Note that $\mathcal{G}(\mathcal{P}(\A))$ and $\mathcal{G}(\mathcal{I}(\A))$ are exactly 
Gorenstein projective and injective subcategories of $\A$, respectively (\cite{EJ}).

For a subcategory $\C$ of $\A$, following \cite[Lemma 5.7]{Hu1},
if $\C$ is self-orthogonal, then
$$\mathcal{G}(\C)=({^\perp\mathscr{C}}\cap\widetilde{\cores\mathscr{C}})\cap
({\mathscr{C}^\perp}\cap\widetilde{\res\mathscr{C}}).$$
Motivated by this, one-sided Gorenstein subcategories were introduced as follows.

\begin{df}\label{def-2.5}
{\rm Let $\mathscr{C}$ and $\mathscr{D}$ be subcategories of $\mathscr{A}$. Write 
$$r\mathcal{G}(\C):={^\perp\mathscr{C}}\cap\widetilde{\cores\mathscr{C}}\
(respectively,\ l\mathcal{G}(\C):={\mathscr{C}^\perp}\cap\widetilde{\res\mathscr{C}}),$$
$$r\mathcal{G}(\C,\D):={^\perp\D}\cap \widetilde{\cores_{\D}\mathscr{C}}\
(respectively,\ l\mathcal{G}(\C,\D):={\D^\perp}\cap \widetilde{\res_{\D}\mathscr{C}}).$$
We call $r\mathcal{G}(\C)$ (respectively, $l\mathcal{G}(\C)$) the 
{\it right} $($respectively, {\it left$)$ Gorenstein subcategory} of $\A$ relative to $\C$ (\cite{SZH}), 
and call $r\mathcal{G}(\C,\D)$ (respectively, $l\mathcal{G}(\C,\D)$) the 
{\it right} $($respectively, {\it left$)$ Gorenstein subcategory} of $\A$ relative to $\C$ and $\D$ (\cite{GW}).}
\end{df}

It is easy to see that if $\C\subseteq\C'$ and $\D\supseteq\D'$, then $r\mathcal{G}(\C,\D)\subseteq r\mathcal{G}(\C',\D')$
and $l\mathcal{G}(\C,\D)\subseteq l\mathcal{G}(\C',\D')$.

\subsection{Wakamatsu tilting modules}

In this paper, all rings are arbitrary associative rings with unit.
Let $R$ be a ring. We use $\Mod R$ to denote the category of left $R$-modules,
and use $\mathcal{P}(R)$, $\mathcal{F}(R)$ and $\mathcal{I}(R)$ to denote the subcategories of $\Mod R$
consisting of projective, flat and injective modules, respectively. For a left $R$-module $M$, 
we use $\pd_RM$, $\fd_RM$ and $\id_RM$ to denote the projective, flat and injective dimensions of $M$, respectively,
and use $\Add_RM$ ($\add_RM$) to denote the subcateory of $\Mod R$ consisting of 
direct summands of direct sums of (finite) copies of $M$.

\begin{df} \label{def-2.6} {\rm (\cite{W1,W3}). Let $R$ be a ring. A left $R$-module $C$
is called {\it generalized tilting} if the following conditions are satisfied.
\begin{enumerate}
\item[{\rm (1)}] ${_RC}$ admits a degreewise finite $R$-projective resolution.
\item[{\rm (2)}] $\Ext_{R}^{\geq 1}(C,C)=0$.
\item[{\rm (3)}] There exists a $\Hom_R(-,\add_RC)$-exact exact sequence
$$0\to{_RR}\to C^0\to C^1\to\cdots\to C^i\to\cdots$$
in $\Mod R$ with all $C^i$ in $\add_RC$.
\end{enumerate}}
\end{df}

Generalized tilting modules are usually called {\it Wakamatsu tilting modules}, see \cite{BR, MR}.
Let $C$ be a left $R$-module and $S=\End(_RC)$. It follows from \cite[Corollary 3.2]{W3} that
${_RC}$ is Wakamatsu tilting if and only if ${C_S}$ is Wakamatsu tilting with $R=\End(C_S)$,
and if and only if $_RC_S$ is semidualizing bimodule in the sense of \cite{ATY,HW}.
Typical examples of Wakamatsu tilting modules include the free module
of rank one and the dualizing module over a Cohen-Macaulay local ring.
For more examples of Wakamatsu tilting modules, the reader is referred to \cite{HW,TH2,W2}.

Recall from \cite{M,St} that a module $M\in\Mod R$ is called
{\it FP-injective} (or {\it absolutely pure}) if $M\in\mathcal{FI}(R)$, where
$\mathcal{FI}(R)=\{M\in\Mod R\mid\Ext_R^1(X,M)=0$ for any finitely presented
left $R$-module $X\}$.

In the following, let $R$ be an arbitrary ring, and let $_RC$ be a Wakamatsu tilting module with $S=\End(_RC)$.
We write
$$(-)_*:=\Hom(C,-),$$ and write
$$\mathcal{P}_C(R):=\{C\otimes_SP\mid P\in\mathcal{P}(S)\}\ \ \text{and}\ \
\mathcal{P}_C(S^{op}):=\{P'\otimes_RC\mid P'\in\mathcal{P}(R^{op})\},$$
$$\mathcal{I}_C(S):=\{I_*\mid I\in\mathcal{I}(R)\}\ \ \text{and}\ \ 
\mathcal{I}_C(R^{op}):=\{I'_*\mid I'\in\mathcal{I}(S^{op})\},$$
$$\mathcal{F}_C(R):=\{C\otimes_SF\mid F\in\mathcal{F}(S)\}\ \ \text{and}\ \ 
\mathcal{FI}_C(S):=\{M_*\mid M\in\mathcal{FI}(R)\}.$$
The modules in $\mathcal{P}_C(R)$ (respectively, $\mathcal{P}_C(S^{op})$), $\mathcal{F}_C(R)$, 
$\mathcal{I}_C(S)$ (respectively, $\mathcal{I}_C(R^{op})$) and $\mathcal{FI}_C(S)$ 
are called {\it $C$-projective}, {\it $C$-flat}, {\it $C$-injective} and {\it $C$-FP-injective}, respectively.
When ${_RC_S}={_RR_R}$, $C$-projective, $C$-flat, $C$-injective and $C$-FP-injective modules are exactly
projective, flat, injective and FP-injective modules, respectively.

\begin{lem}\label{lem-2.7} 
It holds that
$$\id\mathcal{P}_C(R)=\id\mathcal{F}_C(R)=\mathcal{I}_C(S){\text -}\id\mathcal{P}(S)=\mathcal{I}_C(S){\text -}\id\mathcal{F}(S).$$
\end{lem}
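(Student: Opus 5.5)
We prove the four quantities equal via two chains: $\id\mathcal{P}_C(R)=\id\mathcal{F}_C(R)$ and $\mathcal{I}_C(S)\text{-}\id\mathcal{P}(S)=\mathcal{I}_C(S)\text{-}\id\mathcal{F}(S)$, linked by $\id\mathcal{P}_C(R)=\mathcal{I}_C(S)\text{-}\id\mathcal{P}(S)$. Everything is transported along Foxby equivalence: $C\otimes_S-$ and $(-)_*$ are inverse equivalences between $\mathcal{A}_C(S)$ and $\mathcal{B}_C(R)$. Here $\mathcal{P}(S),\mathcal{F}(S)\subseteq\mathcal{A}_C(S)$ and $\mathcal{I}(R)\subseteq\mathcal{B}_C(R)$. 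Both classes contain the relevant modules: $\mathcal{A}_C(S)$ contains $\mathcal{I}_C(S)$, and $\mathcal{B}_C(R)$ contains $\mathcal{P}_C(R),\mathcal{F}_C(R)$. Both are closed under extensions and under cokernels of monomorphisms (respectively, kernels of epimorphisms) where needed. These are standard facts from \cite{HW,TW}, which I will cite.

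For the link, take $P\in\mathcal{P}(S)$, and suppose $\id_R(C\otimes_SP)\le n$ with injective coresolution $0\to C\otimes_SP\to I^0\to\cdots\to I^n\to0$. Since $C\otimes_SP$ and all $I^i$ lie in $\mathcal{B}_C(R)$, and $\mathcal{B}_C(R)$ is closed under cokernels of monomorphisms, every cosyzygy lies in $\mathcal{B}_C(R)$. Hence $\Ext^1_R(C,-)$ vanishes on them, and applying $(-)_*$ keeps the sequence exact. This gives $0\to P\cong (C\otimes_SP)_*\to I^0_*\to\cdots\to I^n_*\to0$, so $\mathcal{I}_C(S)\text{-}\id P\le n$. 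Conversely, given such an $\mathcal{I}_C(S)$-coresolution of $P$, all terms lie in $\mathcal{A}_C(S)$, and so do the cosyzygies: $\mathcal{A}_C(S)$ is closed under cokernels of monomorphisms, being coresolving-type in the needed sense. Then $\Tor^S_{\ge1}(C,-)$ vanishes on them, so $C\otimes_S-$ preserves exactness. Since $C\otimes_S I_*\cong I$, this yields an injective coresolution of $C\otimes_SP$ of length $n$. The identical argument with $F\in\mathcal{F}(S)$ gives $\id\mathcal{F}_C(R)=\mathcal{I}_C(S)\text{-}\id\mathcal{F}(S)$.

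It remains to show $\id\mathcal{P}_C(R)=\id\mathcal{F}_C(R)$. The inequality $\le$ is trivial since $\mathcal{P}_C(R)\subseteq\mathcal{F}_C(R)$. For $\ge$, suppose $\id_R(C\otimes_SP)\le n$ for all projective $P$; equivalently $\Ext^{n+1}_R(-,C\otimes_SP)=0$ for all such $P$. By Lazard, a flat $F$ is a direct limit of free modules $P_\lambda$, so $C\otimes_SF=\varinjlim C\otimes_SP_\lambda$. Direct limits do not preserve injective dimension in general, which is the main obstacle. I would work on the $S$-side instead, showing $\mathcal{I}_C(S)\text{-}\id\mathcal{F}(S)\le\mathcal{I}_C(S)\text{-}\id\mathcal{P}(S)$. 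Take $F$ flat and a pure-exact sequence $0\to K\to P\to F\to0$ with $P$ projective. Then pass to character modules: $F^+$ is injective over $S^{op}$, and $F^{++}$ is flat with $F$ pure in $F^{++}$. Using that $F^{++}$ is a direct summand of a product of copies of $S^{++}$-type modules is delicate. So the route I would try first is via $\Tor$: identify $\id_R(C\otimes_SX)$ for $X\in\mathcal{A}_C(S)$ as $\sup\{i\mid\Ext^i_R(R/\mathfrak a\text{-type test},C\otimes_SX)\ne0\}$ and use Baer's criterion. Only cyclic, hence quotients of $R$, test modules are needed. Then $\Ext^{i}_R(R/\mathfrak a,C\otimes_S\varinjlim P_\lambda)$ is controlled by the adjunction $\Ext^i_R(M,C\otimes_SX)\cong\Ext^i_S(M_*$-resolution$,X)$ and the fact that $\mathcal{I}_C(S)$-dimension $\le n$ is equivalent to $\Ext^{n+1}_S(\mathcal{P}_C(S^{op})$-type test$,-)$-vanishing in a form commuting with direct limits when the test object $C$ has a degreewise finite projective resolution. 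Condition (1) of Definition~\ref{def-2.6} is exactly what makes $\Tor$/$\Ext$ with $C$ commute with direct limits, and I expect this finiteness to be the key input that closes the argument.
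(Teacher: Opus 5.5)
You have a genuine gap: the key step of the lemma is not proved.

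\textbf{What is correct.} Your transport argument works. For $X\in\mathcal{A}_C(S)$, Foxby equivalence carries a finite injective coresolution of $C\otimes_SX$ to an $\mathcal{I}_C(S)$-coresolution of $X$ and back, so $\id_R(C\otimes_SX)=\mathcal{I}_C(S)\text{-}\id X$. Applied to projective and flat $X$, this gives $\id\mathcal{P}_C(R)=\mathcal{I}_C(S)\text{-}\id\mathcal{P}(S)$ and $\id\mathcal{F}_C(R)=\mathcal{I}_C(S)\text{-}\id\mathcal{F}(S)$. The paper gets the latter from \cite[Theorem 3.5(3)]{TH2}.

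Calling $\mathcal{A}_C(S)$ ``coresolving'' is inaccurate, since it need not contain the injective $S$-modules. This is harmless: the coresolution is finite and $\mathcal{A}_C(S)$ is closed under kernels of epimorphisms, so you can peel it from the right end.

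\textbf{The gap.} This reduces the lemma to the single inequality $\id\mathcal{F}_C(R)\le\id\mathcal{P}_C(R)$, equivalently $\mathcal{I}_C(S)\text{-}\id\mathcal{F}(S)\le\mathcal{I}_C(S)\text{-}\id\mathcal{P}(S)$. That inequality is the actual content of the statement, and the paper imports it from \cite[Theorem 3.3(1)]{TH4}. Your last paragraph does not prove it; it lists routes you would try and ends with an expectation.

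\textbf{Why the sketched route fails.} Baer's criterion reduces $\id_R(C\otimes_SF)\le n$ to $\Ext^{n+1}_R(R/\mathfrak{a},C\otimes_SF)=0$ for all left ideals $\mathfrak{a}$. But commuting $\Ext^{n+1}_R(R/\mathfrak{a},-)$ with the direct limit of the $C\otimes_SP_\lambda$ needs $R/\mathfrak{a}$ to have a projective resolution that is finitely generated in low degrees. That is a Noetherian-type condition on $R$. The degreewise finiteness of ${}_RC$ in Definition~\ref{def-2.6}(1) only controls $\Ext^i_R(C,-)$ and $\Tor$ against $C$; it says nothing about the test modules $R/\mathfrak{a}$.

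Three further points in the sketch are also invalid:
\begin{itemize}
\item The isomorphism $\Ext^i_R(M,C\otimes_SX)\cong\Ext^i_S(M_*,X)$ is a Foxby-equivalence statement. It needs $M\in\mathcal{B}_C(R)$ and $X\in\mathcal{A}_C(S)$, and $R/\mathfrak{a}$ is not in $\mathcal{B}_C(R)$ in general.
\item The expression ``$\Ext^{n+1}_S(\mathcal{P}_C(S^{op}),-)$'' is not meaningful, because $\mathcal{P}_C(S^{op})$ consists of right $S$-modules.
\item $F^{++}$ need not be flat without a coherence hypothesis on $S$.
\end{itemize}

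Already for $C={}_RR_R$, the missing step is $\sup\{\id_RF\mid F \text{ flat}\}=\sup\{\id_RP\mid P \text{ projective}\}$ over an arbitrary ring. This cannot come from commuting $\Ext$ with direct limits, since modules of injective dimension at most $n$ are not closed under direct limits in general.

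\textbf{What your tools do give.} They settle the case of flat $F$ with $\pd_SF<\infty$. All syzygies of a finite projective resolution $0\to P_m\to\cdots\to P_0\to F\to 0$ are flat, hence lie in $\mathcal{A}_C(S)$. So $C\otimes_S-$ keeps the resolution exact, and dimension shifting gives $\id_R(C\otimes_SF)\le\max_i\{\id_R(C\otimes_SP_i)-i\}\le\id\mathcal{P}_C(R)$. Flat modules over an arbitrary ring may have infinite projective dimension, so you need a further idea, or the citation of \cite[Theorem 3.3(1)]{TH4} as in the paper.
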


\begin{proof}
By \cite[Theorem 3.3(1)]{TH4}, we have
$$\id\mathcal{P}_C(R)=\mathcal{I}_C(S){\text -}\id\mathcal{P}(S)=\mathcal{I}_C(S){\text -}\id\mathcal{F}(S).$$
Since $\mathcal{F}(S)\subseteq\mathcal{A}_C(S)$ by \cite[Lemma 4.1]{HW}, we have 
$\id\mathcal{F}_C(R)=\mathcal{I}_C(S){\text -}\id\mathcal{F}(S)$ by \cite[Theorem 3.5(3)]{TH2}.
\end{proof}

The first two notions in the following definition were introduced by Holm and J{\o}rgensen \cite{HJ} for
commutative rings, and their non-commutative versions were given in \cite{LHX}.

\begin{df}\label{def-2.8} 
{\rm \begin{enumerate}
\item[]
\item[$(1)$] 
A module $M\in\Mod R$ is called {\it $C$-Gorenstein projective} if $M\in r\mathcal{G}(\mathcal{P}_C(R))$.
\item[$(2)$] 
A module $N\in\Mod S$ is called {\it $C$-Gorenstein injective} if $M\in l\mathcal{G}(\mathcal{I}_C(S))$.
\item[$(3)$] (\cite{HS})
A module $M\in\Mod R$ is called {\it $C$-strong Gorenstein flat} if $M\in r\mathcal{G}(\mathcal{P}_C(R),\mathcal{F}_C(R))$.
\item[$(4)$] 
A module $N\in\Mod S$ is called {\it $C$-Gorenstein FP-injective} if $M\in l\mathcal{G}(\mathcal{I}_C(S),\mathcal{FI}_C(S))$.
\end{enumerate}}
\end{df}

When $_RC_S={_RR_R}$, $C$-Gorenstein projective, $C$-Gorenstein injective, $C$-strong Gorenstein flat
and $C$-Gorenstein FP-injective moduels are exactly Gorenstein projective, Gorenstein injective, strong Gorenstein flat
and Gorenstein FP-injective modules, respectively (\cite{DLM,EJ,H,MD}).

\begin{df} \label{def-2.9} {\rm (\cite{HW})
\begin{enumerate}
\item[(1)] The {\it Auslander class} $\mathcal{A}_{C}(S)$ with respect to $C$
consists of all modules $N$ in $\Mod S$ satisfying the following conditions:
\begin{enumerate}
\item[(A1)] $\Tor^{S}_{\geq 1}(C,N)=0$.
\item[(A2)] $\Ext_R^{\geq 1}(C,C\otimes_SN)=0$.
\item[(A3)] The canonical evaluation homomorphism
$$\mu_N: N\rightarrow(C\otimes_SN)_*$$ defined by $\mu_N(y)(x)=x\otimes y$
for any $y\in N$ and $x\in C$ is an isomorphism.
\end{enumerate}
\end{enumerate}
\begin{enumerate}
\item[(2)] The {\it Bass class} $\mathcal{B}_C(R)$ with respect to $C$
consists of all modules $M$ in $\Mod R$ satisfying the following conditions:
\begin{enumerate}
\item[(B1)] $\Ext_R^{\geq 1}(C,M)=0$.
\item[(B2)] $\Tor^{S}_{\geq 1}(C,M_*)=0$.
\item[(B3)] The canonical evaluation homomorphism
$$\theta_M:C\otimes_SM_*\rightarrow M$$ defined by $\theta_M(x\otimes f)=f(x)$
for any $x\in C$ and $f\in M_*$ is an isomorphism.
\end{enumerate}
\item[(3)] The {\it Auslander class} $\mathcal{A}_C(R^{op})$ in $\Mod R^{op}$
and the {\it Bass class} $\mathcal{B}_C(S^{op})$ in $\Mod S^{op}$ are defined symmetrically.
\end{enumerate}}
\end{df}

The Auslander and Bass classes are certain right and left Gorenstein subcategories, respectively
(see Section 4.2 for details).

\section{General results}

In this section,  $\mathscr{A}$ is an abelian category.

\subsection{Relative projective and injective dimensions}

\begin{lem} \label{lem-3.1}
Let $\U$ be a subcategory of $\A$. Assume that one of the following two conditions is satisfied:
\begin{enumerate}
\item[$(a)$] $\U$ is self-orthogonal and closed under kernels of epimorphisms;
\item[$(b)$] $\U$ is precovering and resolving and $\U$ is closed under direct summands.
\end{enumerate}
Then for any $M\in\A$ and $n\geq 0$, the following statements are equivalent.
\begin{enumerate}
\item[{\rm (1)}] $\U$-$\pd M\leq n$.
\item[{\rm (2)}] There exists a $\Hom_\A(\U,-)$-exact exact sequence
$$0\to U_n\to\cdots\to U_1\to U_0\to M\to 0$$
in $\A$ with all $U_i$ in $\U$.
\end{enumerate}
\end{lem}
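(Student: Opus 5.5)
(2)$\Rightarrow$(1) holds by the definition of $\U$-$\pd M$, so only (1)$\Rightarrow$(2) needs proof. Assume $\U$-$\pd M\leq n$ and fix an exact sequence $0\to U_n\to\cdots\to U_0\to M\to 0$ with all $U_i\in\U$. We treat the two hypotheses separately.

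\emph{Case (a).} We show that this given sequence is already $\Hom_\A(\U,-)$-exact. Split it into short exact sequences $0\to K_{i+1}\to U_i\to K_i\to 0$, where $K_0=M$ and $K_n=U_n$. Since $\U$ is closed under kernels of epimorphisms, descending induction from $K_n=U_n\in\U$ gives $K_i\in\U$ for all $i\geq 1$; the step uses $K_{i}=\mathrm{Ker}(U_{i-1}\to K_{i-1})$ together with $K_{i-1}\in\U$ and $U_{i-1}\in\U$. (For $i=0$, $M$ itself need not lie in $\U$, but this is not needed.) Because $\U$ is self-orthogonal, $\Ext^1_\A(U,K_{i+1})=0$ for every $U\in\U$ and every $i\geq 0$, since $K_{i+1}\in\U$. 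Hence each short exact sequence stays exact after applying $\Hom_\A(U,-)$, and splicing these gives the required $\Hom_\A(\U,-)$-exactness of the whole sequence.

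\emph{Case (b).} Build a proper $\U$-resolution directly. Take a $\U$-precover $U_0'\to M$. It is epic because $\mathcal{P}(\A)\subseteq\U$ and $\A$ has enough projectives. Let $K_1'$ be its kernel, take a $\U$-precover of $K_1'$, and iterate to obtain a $\Hom_\A(\U,-)$-exact exact sequence
$$0\to K_n'\to U_{n-1}'\to\cdots\to U_0'\to M\to 0.$$
It remains to show $K_n'\in\U$. By a Schanuel-type comparison, $\U$-$\pd M\leq n$ forces the $n$-th $\U$-syzygy $K_n'$ to have $\U$-$\pd K_n'=0$. To get this, compare the given resolution with the proper one: the chain map $U_\bullet\to U'_\bullet$ induced by precovers yields, via its mapping cone, that $K_n'\oplus(\text{object of }\U)$ fits into a sequence showing $K_n'$ is a direct summand of an object of $\U$ up to extensions by objects of $\U$. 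Closure under extensions, kernels of epimorphisms and direct summands then gives $K_n'\in\U$. Alternatively, one can cite \cite[Theorem 3.9]{Hu2} as in Lemma \ref{lem-2.1}: it states that for a resolving subcategory closed under direct summands, every $n$-th syzygy in any $\U$-resolution of $M$ lies in $\U$ when $\U$-$\pd M\leq n$.

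The main obstacle is this last step of case (b), showing that the syzygy of the \emph{proper} resolution lies in $\U$. I would first try to invoke \cite{Hu2}, where $\U$ is $\mathcal{P}(\A)$-resolving with proper generator $\mathcal{P}(\A)$. Failing that, I would run the mapping-cone argument: the cone of the comparison map is an exact complex with terms in $\U$ except at the ends, from which $K_n'\oplus U_n\oplus\cdots$ is shown to be in $\U$, and then direct summand closure finishes.
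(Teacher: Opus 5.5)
\textbf{Case (a) contains a false step.} Closure under kernels of epimorphisms does not give $K_i\in\U$ for $i\geq 1$. If you descend from $K_n=U_n$, the sequence $0\to K_{i+1}\to U_i\to K_i\to 0$ exhibits $K_i$ as a cokernel of a monomorphism, not a kernel of an epimorphism. The step you actually wrote, $K_i=\mathrm{Ker}(U_{i-1}\to K_{i-1})$, is an ascending induction, and its first step needs $K_0=M\in\U$, which you yourself note may fail.

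The claim is in fact false. Take $\U=\mathcal{P}(\A)$ with $\A$ a module category; this $\U$ is self-orthogonal and closed under kernels of epimorphisms. For any $M$ with $\pd M=2$, the syzygy $K_1$ has projective dimension $1$, so $K_1\notin\U$.

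The conclusion of (a) still holds, but by dimension shifting rather than membership. For $U\in\U$ and $j\geq 1$, the sequence $0\to K_{j+1}\to U_j\to K_j\to 0$ together with $\Ext^{\geq 1}_\A(U,U_j)=0$ gives $\Ext^m_\A(U,K_j)\cong\Ext^{m+1}_\A(U,K_{j+1})$ for all $m\geq 1$. Hence $\Ext^1_\A(U,K_{i+1})\cong\Ext^{n-i}_\A(U,U_n)=0$ for $0\leq i\leq n-1$, which is exactly the $\Hom_\A(U,-)$-exactness of each short piece. This needs the full vanishing of $\Ext^{\geq 1}$ on $\U$ and never uses closure under kernels of epimorphisms, in line with Remark \ref{rem-3.3}.

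\textbf{Case (b) is sound in substance but takes a different route from the paper.} The paper inducts on $n$. It pulls a single $\U$-precover of $M$ back along the first map of a given $\U$-resolution. The pulled-back sequence stays $\Hom_\A(\U,-)$-exact and ends in an object of $\U$, so it splits. This makes the precover kernel a direct summand of an object of $\U$-projective dimension at most $n-1$. Closure of $\U$-$\pd^{\leq n-1}$ under such extensions and under direct summands (quoted from [Hu3] and [Hu2]) then finishes.

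You instead build the whole proper resolution and compare once. Your mapping-cone argument works and uses only closure properties of $\U$ itself, but it should be stated precisely. The comparison map from the given resolution to the proper one lifts $\mathrm{id}_M$, so it is a quasi-isomorphism, and its cone is the exact sequence
\[0\to U_n\to U_{n-1}\oplus K_n'\to U_{n-2}\oplus U_{n-1}'\to\cdots\to U_0\oplus U_1'\to U_0'\to 0.\]
Only the second term $U_{n-1}\oplus K_n'$ can lie outside $\U$; it is not an end term, and it is not $K_n'\oplus U_n\oplus\cdots$. Closure under kernels of epimorphisms, applied from the right, puts the image $Z$ of $U_{n-1}\oplus K_n'$ in $\U$. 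Extension closure applied to $0\to U_n\to U_{n-1}\oplus K_n'\to Z\to 0$, followed by summand closure, then gives $K_n'\in\U$.

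The price is that every iterated precover must be epic, which you obtain from enough projectives. Section 3.1 does not assume this. The paper instead gets epicness from the finiteness of the relative dimension of the object being covered, which its induction maintains. This is harmless wherever the lemma is applied.

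Finally, the syzygy statement you want to cite is the one the paper quotes as [Hu3, Lemma 3.1(1)] in the proofs of Theorem \ref{thm-3.11} and Proposition \ref{prop-4.1}. Lemma \ref{lem-2.1} uses [Hu2, Theorem 3.9] for summand closure, not for this.
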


\begin{proof}
$(2)\Longrightarrow (1)$ 
It is clear.

$(1)\Longrightarrow (2)$ 
If the condition $(a)$ is satisfied, then any exact sequence
$$0\to U_n\to\cdots\to U_1\to U_0\to M\to 0$$
in $\A$ with all $U_i$ in $\U$ is $\Hom_\A(\U,-)$-exact, and the assertion follows.

Now suppose that the condition $(b)$ is satisfied. We proceed by induction on $n$. The case for $n=0$ is trivial.
Suppose $n\geq 1$. Then there exists an exact sequence
$$0\to K'\to U'_0\to M\to 0$$
in $\A$ with $U'_0\in\U$ and $\U$-$\pd K'\leq n$ by (1). It yields that any $\U$-precover of $M$ is epic.
Since $\U$ is precovering in $\A$, there exists a $\Hom_\A(\U,-)$-exact exact sequence
$$0\to K\to U_0\to M\to 0$$
in $\A$ with $U_0\in\U$. Consider the following pull-back diagram:
$$\xymatrix{& & 0 \ar@{-->}[d] & 0 \ar[d]& &\\
& & K \ar@{==}[r] \ar@{-->}[d] & K \ar[d]& &\\
0 \ar@{-->}[r] & K' \ar@{==}[d] \ar@{-->}[r] & X \ar@{-->}[d]\ar@{-->}[r] & U_0\ar[d] \ar@{-->}[r] & 0\\
0 \ar[r] & K' \ar[r] & U'_0 \ar@{-->}[d] \ar[r] & M \ar[d] \ar[r] & 0\\
& & 0 & 0. & & }$$
Since $\U$ is resolving, applying \cite[Theorem 3.2]{Hu3} to the middle row in the above diagram yields 
$\U$-$\pd X\leq n-1$. On the other hand, since the middle column in the above diagram is $\Hom_\A(\U,-)$-exact
by \cite[Lemma 2.4(1)]{Hu1}, it splits and $X\cong K\oplus U'_0$. Thus $\U$-$\pd K\leq n$ by
\cite[Corollary 3.9]{Hu2}, and the assertion follows by induction.
\end{proof}

Dually, we have the following result.

\begin{lem} \label{lem-3.2}
Let $\E$ be a subcategory of $\A$. Assume that one of the following two conditions is satisfied:
\begin{enumerate}
\item[$(a)$] $\E$ is self-orthogonal and closed under cokernels of monomorphisms;
\item[$(b)$] $\E$ is preenveloping and coresolving and $\E$ is closed under direct summands.
\end{enumerate}
Then for any $M\in\A$ and $n\geq 0$, the following statements are equivalent.
\begin{enumerate}
\item[{\rm (1)}] $\E$-$\id M\leq n$.
\item[{\rm (2)}] There exists a $\Hom_\A(-,\E)$-exact exact sequence
$$0\to M\to E^0\to E^1\to\cdots\to E^n\to 0$$
in $\A$ with all $E^i$ in $\E$.
\end{enumerate}
\end{lem}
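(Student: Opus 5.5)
The plan is to dualize the proof of Lemma \ref{lem-3.1}. The implication $(2)\Longrightarrow(1)$ is immediate from the definition of $\E$-$\id M$, so all the work is in $(1)\Longrightarrow(2)$.

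Under condition $(a)$ nothing needs to be built. Take any exact sequence $0\to M\to E^0\to\cdots\to E^n\to 0$ with all $E^i\in\E$. Cut it into short exact sequences. Since $\E$ is closed under cokernels of monomorphisms, the last cosyzygies lie in $\E$, and inductively from the right every cosyzygy $K^i$ lies in $\E$. Because $\E$ is self-orthogonal, $\Ext^1_\A(K^{i},E)=0$ for all $E\in\E$. Hence each short exact sequence $0\to K^{i-1}\to E^{i-1}\to K^{i}\to0$ stays exact after applying $\Hom_\A(-,E)$, and therefore so does the whole sequence.

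Under condition $(b)$ I would argue by induction on $n$; the case $n=0$ is trivial. For $n\geq1$, condition (1) gives an exact sequence $0\to M\to E'^0\to K'\to0$ with $E'^0\in\E$ and $\E$-$\id K'\leq n-1$. This shows every $\E$-preenvelope of $M$ is monic. Since $\E$ is preenveloping, choose a $\Hom_\A(-,\E)$-exact exact sequence $0\to M\to E^0\to K\to0$ with $E^0\in\E$. Form the push-out of $M\to E^0$ and $M\to E'^0$, with push-out object $X$. It yields two short exact sequences:
\begin{itemize}
\item $0\to E^0\to X\to K'\to0$;
\item $0\to E'^0\to X\to K\to 0$.
\end{itemize}

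Next I would apply the dual of \cite[Theorem 3.2]{Hu3}, valid because $\E$ is coresolving, to the first sequence. Since $E^0\in\E$ and $\E$-$\id K'\leq n-1$, this should give $\E$-$\id X\leq n-1$. For the second sequence, the dual of \cite[Lemma 2.4(1)]{Hu1} shows it is $\Hom_\A(-,\E)$-exact, since it is obtained by push-out from a $\Hom_\A(-,\E)$-exact sequence. As $E'^0\in\E$, the sequence splits and $X\cong E'^0\oplus K$.

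Then I would use the dual of \cite[Corollary 3.9]{Hu2}, or Lemma \ref{lem-2.1}(2), to get $\E$-$\id K\leq n-1$. Here $\E$ must be closed under direct summands and coresolving. By the induction hypothesis $K$ has a $\Hom_\A(-,\E)$-exact $\E$-coresolution of length $n-1$. Splicing it with $0\to M\to E^0\to K\to0$ gives the required sequence of length $n$, which is $\Hom_\A(-,\E)$-exact because both pieces are.

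The main obstacle is the dimension bookkeeping for $X$ in the first push-out sequence, namely that an extension of an object with $\E$-$\id\leq n-1$ by an object of $\E$ again has $\E$-$\id\leq n-1$. This rests on the cited dual results for coresolving subcategories. The splitting step and the closedness of $\E$-$\id^{\leq n-1}$ under summands are the other points that depend on those citations.
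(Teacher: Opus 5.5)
Your case (b) is correct. It is exactly the dual of the paper's proof of Lemma~\ref{lem-3.1}(b): push-out in place of pull-back, then the dual of \cite[Theorem 3.2]{Hu3}, splitting because push-outs preserve $\Hom_\A(-,\E)$-exactness, closure of $\E$-$\id^{\leq n-1}$ under summands, and induction. You also correctly conclude $\E$-$\id K\leq n-1$. The problem is in case (a).

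In case (a) you claim that every cosyzygy $K^i$ lies in $\E$. This is false, and your stated reason does not give it. Write the pieces as $0\to M\to E^0\to K^0\to 0$ and $0\to K^{i-1}\to E^i\to K^i\to 0$, with $K^{n-1}=E^n$. Working from the right, the step from $E^i,K^i\in\E$ to $K^{i-1}\in\E$ needs closure under \emph{kernels of epimorphisms}, which is not assumed. Closure under cokernels of monomorphisms only lets you move from left to right, but the starting object $M$ is not in $\E$.

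Here is a concrete counterexample. Take $\E=\mathcal{I}(\A)$, which is self-orthogonal and closed under cokernels of monomorphisms. Take $M$ with $\id M=2$, for example $M=R$ over $R=k[x,y]$. In a coresolution $0\to M\to I^0\to I^1\to I^2\to 0$, the cosyzygy $K^0$ is not injective, since otherwise $\id M\leq 1$.

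The conclusion you actually need, $\Ext^1_\A(K^i,E)=0$ for all $E\in\E$ and $0\leq i\leq n-1$, is still true, but it needs a different argument: show $K^i\in{}^{\perp}\E$ by descending dimension shifting.
\begin{itemize}
\item Start from $K^{n-1}=E^n\in\E\subseteq{}^{\perp}\E$.
\item For $j\geq 1$, the long exact sequence of $0\to K^{i-1}\to E^i\to K^i\to 0$ places $\Ext^j_\A(K^{i-1},E)$ between $\Ext^j_\A(E^i,E)=0$ and $\Ext^{j+1}_\A(K^i,E)=0$. So $K^{i-1}\in{}^{\perp}\E$.
\item Hence $\Ext^1_\A(K^i,E)=0$ for every $0\leq i\leq n-1$, and each short exact piece, including $0\to M\to E^0\to K^0\to 0$, is $\Hom_\A(-,\E)$-exact.
\end{itemize}
This uses only self-orthogonality. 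That matches Remark~\ref{rem-3.3}, which says closure under cokernels of monomorphisms is superfluous in (a). With this replacement your proof of (a) is complete.
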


\begin{rem}\label{rem-3.3}
{\rm Both the condition ``$\U$ is closed under kernels of epimorphisms" in Lemma \ref{lem-3.1}$(a)$ and 
the condition ``$\E$ is closed under cokernels of monomorphisms" in Lemma \ref{lem-3.2}$(a)$ are superfluous.
However, In view that they are necessary to prove the rest results in this subsection, 
for the sake simplicity of subsequent statements we include them there.}
\end{rem}

\begin{prop} \label{prop-3.4}
Let $\U$ and $\E$ be subcategories of $\A$. If one of the conditions $(a)$ and $(b)$ in Lemma \ref{lem-3.1}
is satisfied, then for any $n\geq 0$, the following statements are equivalent.
\begin{enumerate}
\item[{\rm (1)}] $\U$-$\pd\E\leq n$.
\item[{\rm (2)}] $\E$-$\id^{<\infty}\subseteq\U$-$\pd^{\leq n}$.
\end{enumerate}
\end{prop}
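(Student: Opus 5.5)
The implication $(2)\Longrightarrow(1)$ is immediate. Every object $E\in\E$ has $\E$-$\id E=0$, so $\E\subseteq\E$-$\id^{<\infty}\subseteq\U$-$\pd^{\leq n}$, which says $\U$-$\pd\E\leq n$.

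For $(1)\Longrightarrow(2)$, let $M\in\E$-$\id^{<\infty}$ with $\E$-$\id M=m$. We argue by induction on $m$. The case $m=0$ means $M\in\E$ and is exactly the hypothesis (1). For $m\geq 1$, choose an exact sequence
$$0\to M\to E^0\to M'\to 0$$
with $E^0\in\E$ and $\E$-$\id M'\leq m-1$. By induction $\U$-$\pd M'\leq n$, and $\U$-$\pd E^0\leq n$ by (1). The task is then to show that $\U$-$\pd$ of the kernel of an epimorphism between two objects of $\U$-$\pd^{\leq n}$ is again $\leq n$. In general kernels only satisfy $\leq n+1$, so the real content is to exploit how $M$ sits inside $E^0$.

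I would proceed as follows. By Lemma \ref{lem-3.1}, take a $\Hom_\A(\U,-)$-exact resolution
$$0\to U_n\to\cdots\to U_0\to M'\to 0$$
with $U_i\in\U$. Pull back $E^0\to M'$ along $U_0\to M'$ to obtain $X$, which fits into exact sequences
$$0\to M\to X\to U_0\to 0\quad\text{and}\quad 0\to K\to X\to E^0\to 0,$$
where $K=\Ker(U_0\to M')$ has $\U$-$\pd K\leq n-1$. Under $(a)$ or $(b)$, the class $\U$-$\pd^{\leq n}$ should be closed under extensions. For $(b)$ this is \cite[Theorem 3.2]{Hu3}; for $(a)$ one argues with a horseshoe-type construction using self-orthogonality. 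Hence $\U$-$\pd X\leq n$.

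It remains to handle the sequence $0\to M\to X\to U_0\to 0$ and deduce $\U$-$\pd M\leq n$. Under $(b)$, \cite[Theorem 3.2]{Hu3} gives $\U$-$\pd M\leq n$ whenever $U_0\in\U$ and $\U$-$\pd X\leq n$. The same is true under $(a)$: take a $\U$-resolution of $X$ of length $n$, pull back along $U_0$, and use closure under kernels of epimorphisms to get a resolution of $M$ of length $n$.

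The main obstacle I expect is this extension and kernel bookkeeping under condition $(a)$, where no resolving or precovering hypothesis is available. There I would rely on self-orthogonality, which makes every $\U$-resolution $\Hom_\A(\U,-)$-exact, together with Lemma \ref{lem-3.1}, to lift maps and splice resolutions. Closure of $\U$ under kernels of epimorphisms is then used to keep the syzygies inside $\U$, and this is why Remark \ref{rem-3.3} says that hypothesis is needed here.
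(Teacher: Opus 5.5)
Your proof is correct, but it is organized differently from the paper's.

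The paper does not induct on $m$. It fixes the whole coresolution $0\to M\to E^0\to\cdots\to E^m\to 0$ and takes $\Hom_\A(\U,-)$-exact $\U$-resolutions of length $n$ of all the $E^i$ simultaneously. This is where Lemma~\ref{lem-3.1}, and hence the precovering hypothesis in $(b)$, is needed, since it allows the maps $E^i\to E^{i+1}$ to be lifted. These resolutions are fed into \cite[Corollary 3.3(1)]{Hu1}, a total-complex type construction that yields $(3.1)$ and $(3.2)$ at once. Then $U\in\U$ by $(3.2)$, using only additivity and closure under kernels of epimorphisms, and $(3.1)$ is the required resolution of length $n$.

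You instead reduce, by induction on $m$, to the statement that $\U$-$\pd^{\leq n}$ is closed under kernels of epimorphisms. You prove this with the pullback, an extension-closure step and the kernel step. The kernel step is fine: the composite $V_0\to X\to U_0$ is epic, its kernel $L$ lies in $\U$, and $0\to\Omega\to L\to M\to 0$ is exact, where $\Omega$ is the kernel of $V_0\to X$.

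This route is more elementary and modular. You never actually use the $\Hom_\A(\U,-)$-exactness you take from Lemma~\ref{lem-3.1}. So in case $(b)$ your argument needs only that $\U$ is resolving (via the same \cite[Theorem 3.2]{Hu3} the paper invokes in Lemma~\ref{lem-3.1}), not that it is precovering.

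The price is that in case $(a)$, extension-closure of $\U$-$\pd^{\leq n}$ is an extra lemma not in the paper, and you only sketch it. It does hold. By dimension shifting from $U_n\in\U$, every object of finite $\U$-projective dimension lies in ${\U^{\perp}}$, so the horseshoe liftings exist. The last syzygy is then an extension of two objects of $\U$; it splits by self-orthogonality and lies in $\U$ by additivity, which the paper also assumes implicitly.

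Finally, your remark that kernels in general only satisfy $\leq n+1$ mixes up kernels and cokernels. For resolving $\U$ and $0\to A\to B\to C\to 0$, the standard bound is $\U$-$\pd A\leq\max\{\U\text{-}\pd B,\,\U\text{-}\pd C-1\}$. Your inductive step establishes exactly the instance of this bound that you need, in case $(a)$ as well.
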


\begin{proof}
$(2)\Longrightarrow (1)$ 
It is trivial.

$(1)\Longrightarrow (2)$ 
Let  $M\in\E$-$\id^{<\infty}$. Then there exists the following exact sequence
$$0\to M\to E^0\to E^1\to\cdots\to E^m\to 0$$
in $\A$ with $m\geq 0$ and all $E^i$ in $\E$ is exact. By (1) and Lemma \ref{lem-3.1}, 
there exists a $\Hom_\A(\U,-)$-exact exact sequence   
$$0\to U_n^i\to\cdots\to U_1^i\to U_0^i\to E^i\to 0$$
in $\A$ with all $U_j^i$ in $\U$ for any $0\leq i\leq m$ and $0\leq j\leq n$. 
Applying \cite[Corollary 3.3(1)]{Hu1} yields the following two exact sequences:
$$0\to\oplus_{i=0}^mU_{i+n}^i\to\cdots\to\oplus_{i=0}^mU_{i+2}^i\to\oplus_{i=0}^mU_{i+1}^i\to U\to M\to 0,\eqno{(3.1)}$$
$$0\to U\to\oplus_{i=0}^mU_{i}^i\to\oplus_{i=1}^mU_{i-1}^i\to\cdots\to U_{0}^{m-1}\oplus U_1^m\to U_0^m\to 0,\eqno{(3.2)}$$
where $U_i^j=0$ whenever $i\geq n+1$. Since $\U$ is additive and closed under kernels of epimorphisms, we have $U\in\U$
by (3.2). It follows from (3.1) that $\U$-$\pd M\leq n$ and $M\in\U$-$\pd^{\leq n}$.
\end{proof}

Dually, we have the following result.

\begin{prop} \label{prop-3.5}
Let $\U$ and $\E$ be subcategories of $\A$. If one of the conditions $(a)$ and $(b)$ in Lemma \ref{lem-3.2}
is satisfied, then for any $n\geq 0$, the following statements are equivalent.
\begin{enumerate}
\item[{\rm (1)}] $\E$-$\id\U\leq n$.
\item[{\rm (2)}] $\U$-$\pd^{<\infty}\subseteq\E$-$\id^{\leq n}$.
\end{enumerate}
\end{prop}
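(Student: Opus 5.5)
The plan is to dualize the proof of Proposition \ref{prop-3.4}. The implication $(2)\Longrightarrow(1)$ is trivial, since every object of $\U$ has $\U$-projective dimension $0$. Hence $\U\subseteq\U$-$\pd^{<\infty}\subseteq\E$-$\id^{\leq n}$, which gives $\E$-$\id\U\leq n$.

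For $(1)\Longrightarrow(2)$, take $M\in\U$-$\pd^{<\infty}$ with an exact sequence
$$0\to U_m\to\cdots\to U_1\to U_0\to M\to 0$$
where all $U_i$ lie in $\U$ and $m\geq 0$. By (1), each $U_i$ has $\E$-$\id U_i\leq n$. Lemma \ref{lem-3.2} applies under either condition $(a)$ or $(b)$, so for each $0\leq i\leq m$ we get a $\Hom_\A(-,\E)$-exact exact sequence
$$0\to U_i\to E_i^0\to E_i^1\to\cdots\to E_i^n\to 0$$
with all $E_i^j$ in $\E$. The $\Hom_\A(-,\E)$-exactness is what allows these coresolutions to be glued along the maps of the resolution of $M$. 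We then apply the dual of \cite[Corollary 3.3(1)]{Hu1}, i.e.\ \cite[Corollary 3.3(2)]{Hu1}, which is the mapping-cone/horseshoe type construction used in (3.1) and (3.2). It should produce two exact sequences, with $E_i^j=0$ for $j\geq n+1$:
$$0\to M\to E\to\oplus_{i=0}^mE_i^{i+1}\to\oplus_{i=0}^mE_i^{i+2}\to\cdots\to\oplus_{i=0}^mE_i^{i+n}\to 0,$$
$$0\to E_m^0\to E_{m-1}^0\oplus E_m^1\to\cdots\to\oplus_{i=0}^mE_i^{i}\to E\to 0.$$

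The second sequence is a finite exact sequence of objects of $\E$ ending in $E$. Since $\E$ is additive and closed under cokernels of monomorphisms, we split it into short exact sequences and conclude inductively, from the left, that $E\in\E$. This step uses exactly the closure hypothesis noted in Remark \ref{rem-3.3}. In case $(b)$, coresolving gives this closure directly. The first sequence then shows $\E$-$\id M\leq n$, i.e.\ $M\in\E$-$\id^{\leq n}$.

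The main obstacle is verifying that the cited gluing result from \cite{Hu1} applies in its dual form. Its input requires the coresolutions of the $U_i$ to be $\Hom_\A(-,\E)$-exact, so that the morphisms $U_{i+1}\to U_i$ extend to chain maps between the coresolutions, and this is guaranteed by Lemma \ref{lem-3.2}. If needed, I would instead argue by induction on $m$ using the short exact sequence $0\to K\to U_0\to M\to 0$ and a pushout, but the direct dualization is the intended route.
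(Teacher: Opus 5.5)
Your proposal is correct and follows essentially the same route as the paper, which states Proposition~\ref{prop-3.5} as the formal dual of Proposition~\ref{prop-3.4}: use Lemma~\ref{lem-3.2} to get $\Hom_\A(-,\E)$-exact $\E$-coresolutions of length $n$ of the $U_i$, glue them with the dual of \cite[Corollary 3.3(1)]{Hu1}, and use that $\E$ is additive and closed under cokernels of monomorphisms to get $E\in\E$. Your bookkeeping of the two dual sequences (with $E_i^j=0$ for $j\geq n+1$) and your trivial direction $(2)\Longrightarrow(1)$ are also right.
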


As a consequence of Propositions \ref{prop-3.4} and \ref{prop-3.5}, we get the following result.

\begin{prop} \label{prop-3.6}
Let $\U$ and $\E$ be subcategories of $\A$. 
If one of the conditions $(a)$ and $(b)$ in Lemma \ref{lem-3.1} and
one of the conditions $(a)$ and $(b)$ in Lemma \ref{lem-3.2} are satisfied.
\begin{enumerate}
\item[{\rm (1)}] 
If $\V$ is a subcategory of $\A$ containing $\U$,
then for any $n\geq 0$, the following statements are equivalent.
\begin{enumerate}
\item[{\rm (1.1)}] $\U$-$\pd\E\leq n$ and $\E$-$\id\V\leq n$.
\item[{\rm (1.2)}] $\U$-$\pd^{\leq n}=\E$-$\id^{\leq n}=\V$-$\pd^{\leq n}$.
\end{enumerate}
\item[{\rm (2)}] 
If $\F$ is a subcategory of $\A$ containing $\E$,
then for any $n\geq 0$, the following statements are equivalent.
\begin{enumerate}
\item[{\rm (2.1)}] $\E$-$\id\U\leq n$ and $\U$-$\pd\F\leq n$.
\item[{\rm (2.2)}] $\E$-$\id^{\leq n}=\U$-$\pd^{\leq n}=\F$-$\id^{\leq n}$.
\end{enumerate}
\end{enumerate}
\end{prop}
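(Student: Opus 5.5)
Both parts are proved by combining Propositions \ref{prop-3.4} and \ref{prop-3.5}; part (2) is dual to part (1), so I describe (1) and then indicate the changes for (2).

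For (1.1)$\Rightarrow$(1.2), first apply Proposition \ref{prop-3.4} to $\U$-$\pd\E\leq n$. This gives $\E$-$\id^{<\infty}\subseteq\U$-$\pd^{\leq n}$, hence $\E$-$\id^{\leq n}\subseteq\U$-$\pd^{\leq n}$. Next apply Proposition \ref{prop-3.5} to $\E$-$\id\V\leq n$. Since $\U\subseteq\V$, we have $\E$-$\id\U\leq n$, so Proposition \ref{prop-3.5} gives $\U$-$\pd^{<\infty}\subseteq\E$-$\id^{\leq n}$, hence $\U$-$\pd^{\leq n}\subseteq\E$-$\id^{\leq n}$. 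Together, $\U$-$\pd^{\leq n}=\E$-$\id^{\leq n}$.

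It remains to bring in $\V$. Because $\U\subseteq\V$, any resolution by objects of $\U$ is also one by objects of $\V$, so $\U$-$\pd^{\leq n}\subseteq\V$-$\pd^{\leq n}$. For the reverse inclusion, take $M\in\V$-$\pd^{\leq n}$, with an exact sequence $0\to V_n\to\cdots\to V_0\to M\to 0$ and all $V_i\in\V$. Each $V_i$ satisfies $\E$-$\id V_i\leq n<\infty$, so $V_i\in\E$-$\id^{<\infty}\subseteq\U$-$\pd^{\leq n}$ by the first step; in particular $\U$-$\pd V_i<\infty$. The aim is to show $\U$-$\pd M<\infty$, since then $M\in\U$-$\pd^{<\infty}\subseteq\E$-$\id^{\leq n}=\U$-$\pd^{\leq n}$ and we are done.

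This finiteness is the main obstacle: we must show that objects of finite $\U$-projective dimension are closed under cokernels of monomorphisms, i.e.\ that they have a two-out-of-three property. My first attempt would be to glue the given resolutions of the $V_i$ using the argument of Proposition \ref{prop-3.4}, namely the pattern of \cite[Corollary 3.3(1)]{Hu1} applied to the sequences $(3.1)$ and $(3.2)$, but run in the dual direction. The tools available are that $\U$ is closed under kernels of epimorphisms and that Lemma \ref{lem-3.1} supplies $\Hom_\A(\U,-)$-exact resolutions. I would split the long exact sequence into short exact sequences and apply a horseshoe-type argument step by step. If that route fails, I would try a different reduction: show that $M$ has finite $\E$-injective dimension and then use $\E$-$\id^{<\infty}\subseteq\U$-$\pd^{\leq n}$. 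The step I would need there is that a cokernel of a monomorphism between objects of finite $\E$-injective dimension again has finite $\E$-injective dimension. This should follow by a dual splicing argument using condition (a) or (b) of Lemma \ref{lem-3.2}.

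For (1.2)$\Rightarrow$(1.1): every $E\in\E$ lies in $\E$-$\id^{\leq 0}\subseteq\E$-$\id^{\leq n}=\U$-$\pd^{\leq n}$, so $\U$-$\pd\E\leq n$. Likewise every $V\in\V$ lies in $\V$-$\pd^{\leq n}=\E$-$\id^{\leq n}$, so $\E$-$\id\V\leq n$.

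Part (2) is the exact dual: interchange the roles of Propositions \ref{prop-3.4} and \ref{prop-3.5}, and use $\F\supseteq\E$ in place of $\V\supseteq\U$.
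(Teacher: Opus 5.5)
Your direction (1.2)$\Rightarrow$(1.1) is fine. So is your derivation of $\U$-$\pd^{\leq n}=\E$-$\id^{\leq n}$, and so is the inclusion $\U$-$\pd^{\leq n}\subseteq\V$-$\pd^{\leq n}$. But the proof of (1.1)$\Rightarrow$(1.2) is not finished. The reverse inclusion $\V$-$\pd^{\leq n}\subseteq\U$-$\pd^{\leq n}$, which you yourself flag as the main obstacle, is only announced: you offer two tentative strategies and no argument. Your first strategy can in fact be completed. Under either condition of Lemma \ref{lem-3.1}, finite $\U$-resolutions can be taken $\Hom_\A(\U,-)$-exact. So a monomorphism $A\to B$ lifts to a chain map of finite $\U$-resolutions, and its mapping cone is a finite $\U$-resolution of the cokernel. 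Iterating along $0\to V_n\to\cdots\to V_0\to M\to 0$ then gives $\U$-$\pd M<\infty$. None of this is in your proposal, however, and it is unnecessary.

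The missing observation is that Proposition \ref{prop-3.5} puts hypotheses only on $\E$ (condition (a) or (b) of Lemma \ref{lem-3.2}); the other subcategory in it is arbitrary. So do not first weaken $\E$-$\id\V\leq n$ to $\E$-$\id\U\leq n$. Apply Proposition \ref{prop-3.5} to $\V$ itself, which gives $\V$-$\pd^{<\infty}\subseteq\E$-$\id^{\leq n}$ directly. Together with Proposition \ref{prop-3.4} and $\U\subseteq\V$, this closes the chain
\[
\V\text{-}\pd^{\leq n}\subseteq\V\text{-}\pd^{<\infty}\subseteq\E\text{-}\id^{\leq n}\subseteq\E\text{-}\id^{<\infty}\subseteq\U\text{-}\pd^{\leq n}\subseteq\V\text{-}\pd^{\leq n}.
\]
This is exactly the paper's proof, and no two-out-of-three property is needed. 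Your sketch of (2) inherits the same gap, and the fix is dual. Proposition \ref{prop-3.4} only constrains $\U$, so apply it to $\F$ to get $\F$-$\id^{<\infty}\subseteq\U$-$\pd^{\leq n}$. Then
\[
\F\text{-}\id^{\leq n}\subseteq\U\text{-}\pd^{\leq n}\subseteq\U\text{-}\pd^{<\infty}\subseteq\E\text{-}\id^{\leq n}\subseteq\F\text{-}\id^{\leq n},
\]
where the last inclusion uses $\E\subseteq\F$.
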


\begin{proof}
(1) By Propositions \ref{prop-3.4} and \ref{prop-3.5}, we have that $\U$-$\pd\E\leq n$ and $\E$-$\id\V\leq n$
if and only if 
$$\V\text{-}\pd^{\leq n}\subseteq\V\text{-}\pd^{<\infty}\subseteq\E\text{-}\id^{\leq n}
\subseteq\E\text{-}\id^{<\infty}\subseteq\U\text{-}\pd^{\leq n}.$$
Since $\U\subseteq\V$ by assumption, we have $\U\text{-}\pd^{\leq n}\subseteq\V\text{-}\pd^{\leq n}$, 
and the assertion follows.

(2) It is dual to (1).
\end{proof}

\begin{rem}\label{rem-3.7}
{\rm In the case when all ``$\leq n$" in Propositions \ref{prop-3.4}--\ref{prop-3.6} are replaced by ``$<\infty$", 
all these results still hold true.}
\end{rem}

\subsection{One-sided Gorenstein categories}

In this subsection, $\mathscr{A}$ is an abelian category having enough projective and injective objects.
We first introduce one-sided Gorensrein categories as follows.

\begin{df} \label{def-3.8}
{\rm Let $\mathscr{C}$ and $\mathscr{D}$ be subcategories of $\mathscr{A}$, and let $n\geq 0$.
\begin{enumerate}
\item[{\rm (1)}] $\A$ is called a {\it right $n$-$(\C,\D)$-Gorenstein category} if 
$$\mathscr{C}{\text -}\pd \mathcal{I}(\mathscr{A})\leq n\ \text{and}\ \id \mathscr{D}\leq n.$$
In particular, if $\C=\D$, then a right $n$-$(\C,\D)$-Gorenstein category is called 
a {\it right $n$-$\C$-Gorenstein category} for short.
\item[{\rm (2)}] $\A$ is called a {\it left $n$-$(\C,\D)$-Gorenstein category} if 
$$\mathscr{C}{\text -}\id \mathcal{P}(\mathscr{A})\leq n\ \text{and}\ \pd \mathscr{D}\leq n.$$
In particular, if $\C=\D$, then a left $n$-$(\C,\D)$-Gorenstein category is called 
a {\it left $n$-$\C$-Gorenstein category} for short.
\end{enumerate}}
\end{df}

If $\C\subseteq\C'$ and $\D\supseteq\D'$, then a right $n$-$(\C,\D)$-Gorenstein category
and a left $n$-$(\C,\D)$-Gorenstein category are right $n$-$(\C',\D')$-Gorenstein and 
left $n$-$(\C',\D')$-Gorenstein, respectively.
We list some examples of one-sided Gorenstein categories as follows. See Section 4 for more examples.

\begin{exa} \label{exa-3.9}
{\rm 
Recall from \cite[Definition VII.2.1]{BR} that $\A$ is called {\it Gorenstein} if $\id\mathcal{P}(\A)<\infty$ and 
$\pd\mathcal{I}(\A)<\infty$. It follows from \cite[Proposition VII.1.3(vi)]{BR} that $\A$ is Gorenstein 
if and only if $\id\mathcal{P}(\A)=\pd\mathcal{I}(\A)<\infty$. If $\id\mathcal{P}(\A)=\pd\mathcal{I}(\A)\leq n<\infty$, 
then we call $\A$ {\it $n$-Gorenstein}. It is easy to see that $\A$ is $n$-Gorenstein if and only if 
it is right $n$-$\mathcal{P}(\A)$-Gorenstein, and if and only if it is left $n$-$\mathcal{I}(\A)$-Gorenstein.}
\end{exa}

From now on, assume that $\mathscr{A}$ satisfies the AB4 and AB4$^{*}$ conditions, that is,
$\mathscr{A}$ has arbitrary direct sums and products which are exact.

\begin{lem} \label{lem-3.10}
For any $M\in\mathscr{A}$, there exists an exact sequence
$$0\to M\oplus X\to P\oplus I\to M\oplus X\to 0$$
in $\mathscr{A}$ with $P$ projective and $I$ injective.
\end{lem}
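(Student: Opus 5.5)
We build the sequence by gluing a projective cover and an injective hull of suitable syzygies and cosyzygies of $M$, using the AB4 and AB4$^*$ conditions to form infinite direct sums and products of short exact sequences. Since $\A$ has enough projectives and injectives, we take a projective resolution $\cdots\to P_1\to P_0\to M\to 0$ and an injective coresolution $0\to M\to I^0\to I^1\to\cdots$. Write $K_i$ for the $i$-th syzygy, with $K_0=M$, so that there are exact sequences $0\to K_{i+1}\to P_i\to K_i\to 0$. Dually, write $L^i$ for the $i$-th cosyzygy, with $L^0=M$, so that there are exact sequences $0\to L^i\to I^i\to L^{i+1}\to 0$.

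We then set
$$X:=\Big(\bigoplus_{i\geq 1}K_i\Big)\oplus\Big(\prod_{i\geq 1}L^i\Big),\qquad P:=\bigoplus_{i\geq 0}P_i,\qquad I:=\prod_{i\geq 0}I^i.$$
By AB4, the direct sum of the exact sequences $0\to K_{i+1}\to P_i\to K_i\to 0$ over $i\geq 0$ is exact. This gives
$$0\to\bigoplus_{i\geq 1}K_i\to P\to\bigoplus_{i\geq 0}K_i\to 0,$$
and here $P$ is projective, since a direct sum of projectives is projective. By AB4$^*$, the product of the exact sequences $0\to L^i\to I^i\to L^{i+1}\to 0$ over $i\geq 0$ is exact. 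This gives
$$0\to\prod_{i\geq 0}L^i\to I\to\prod_{i\geq 1}L^i\to 0,$$
and here $I$ is injective, since a product of injectives is injective.

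Taking the direct sum of these two exact sequences gives
$$0\to\Big(\bigoplus_{i\geq 1}K_i\Big)\oplus\Big(\prod_{i\geq 0}L^i\Big)\to P\oplus I\to\Big(\bigoplus_{i\geq 0}K_i\Big)\oplus\Big(\prod_{i\geq 1}L^i\Big)\to 0.$$
Since $K_0=M=L^0$, we have $\prod_{i\geq 0}L^i\cong M\oplus\prod_{i\geq 1}L^i$ and $\bigoplus_{i\geq 0}K_i\cong M\oplus\bigoplus_{i\geq 1}K_i$. So both the left and the right end terms are isomorphic to $M\oplus X$, which gives the required sequence $0\to M\oplus X\to P\oplus I\to M\oplus X\to 0$.

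The only delicate point is the asymmetry. Syzygies shift index downward in the quotient and cosyzygies shift upward in the subobject, so each index shift produces one spare copy of $M=K_0=L^0$, on opposite sides. Pairing a direct sum on the projective side with a product on the injective side balances these two copies, and this is exactly why both AB4 and AB4$^*$ are needed.
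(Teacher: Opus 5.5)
Your proof is correct and is essentially the paper's argument: you take the direct sum (via AB4) of the syzygy sequences and the product (via AB4$^{*}$) of the cosyzygy sequences, add them, and absorb the spare copy of $M$ on each side into $M\oplus X$. The only differences are indexing conventions, and the opening mention of a ``projective cover and injective hull'' is a misnomer, since only ordinary resolutions are used.
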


\begin{proof}
Let $M\in\mathscr{A}$, and let
$$\cdots\to P_i\to\cdots\to P_1\to P_0\to M\to 0$$
and
$$0\to M\to I^0\to I^1\to\cdots\to I^i\to\cdots$$
be a projective resolution and an injective coresolution of $M$ in $\mathscr{A}$, respectively.
Decompose them respectively into short exact sequences:
$$0\to U_{i}\to P_{i}\to U_{i-1}\to 0, $$
and
$$0\to V^{i-1}\to I^{i}\to V^{i}\to 0,$$
where $U_i=\Im(P_{i+1}\to P_{i})$ and $V^i=\Im(I^{i}\to I^{i+1})$ with $i\geq 0$, and in particular, $U_{-1}=V^{-1}=M$.
Then we get the following two exact sequences:
$$0\to \oplus_{i\geq 0}U_i\to\oplus_{i\geq 0}P_i\to M\oplus(\oplus_{i\geq 0}U_i)\to 0$$
and
$$0\to M\oplus(\Pi_{i\geq 0}V^i)\to\Pi_{i\geq 0}I^i\to\Pi_{i\geq 0}V^i\to 0.$$
It induces the following exact sequence:
$$0\to M\oplus(U\oplus V)\to P\oplus I\to M\oplus(U\oplus V)\to 0,$$
where $U=\oplus_{i\geq 0}U_i$, $V=\Pi_{i\geq 0}V^i$, $P:=\oplus_{i\geq 0}P_i\in\mathcal{P}(\A)$ 
and $I:=\Pi_{i\geq 0}I^i\in\mathcal{I}(\A)$.
\end{proof}

We write
$$\pd^{\leq n}:=\{A\in\A\mid\pd A\leq n\}\ \text{and}\ \id^{\leq n}:=\{A\in\A\mid\id A\leq n\}.$$
The following is the main result in this section.

\begin{thm} \label{thm-3.11}
Let $\mathscr{C}$ and $\mathscr{D}$ be subcategories of $\mathscr{A}$ with $\C$  
closed under direct summands. Assume that
$$\mathscr{C}\subseteq\mathscr{D}\cap{^{\bot}\mathscr{D}}\ \ and\ \
\mathcal{P}(\mathscr{A})\subseteq r\mathcal{G}(\mathscr{C},\mathscr{D})$$
and $n\ge 0$. Consider the following conditions.
\begin{enumerate}
\item[{\rm (1)}] $\A$ is right $n$-$(\C,\D)$-Gorenstein 
$($that is, $\mathscr{C}$-$\pd \mathcal{I}(\mathscr{A})\leq n$ and $\id \mathscr{D}\leq n)$.
\item[{\rm (2)}] $r\mathcal{G}(\mathscr{C},\mathscr{D})$-$\pd M\leq n$ for any $M\in\mathscr{A}$.
\item[{\rm (3)}] $r\mathcal{G}(\mathscr{C},\mathscr{D})$-$\pd \mathcal{I}(\mathscr{A})\leq n$ and
$\id \mathscr{D}\leq n$.
\item[{\rm (4)}] $\mathscr{D}$-$\pd^{\leq n}=\id^{\leq n}=\mathscr{C}$-$\pd^{\leq n}$.
\item[{\rm (5)}] $\mathscr{D}$-$\pd^{\leq n}\subseteq\id^{\leq n}\subseteq 
r\mathcal{G}(\mathscr{C},\mathscr{D})$-$\pd^{\leq n}$.
\end{enumerate}
It holds that $(4)\Longrightarrow (1)\Longleftrightarrow (2)\Longleftrightarrow (3)\Longleftarrow (5)$.
\begin{enumerate}
\item[{\rm (i)}] 
If $\mathscr{C}$ is closed under kernels of epimorphisms, then all the conditions $(1)$--$(4)$ are equivalent.
\item[{\rm (ii)}] 
If $r\mathcal{G}(\mathscr{C},\mathscr{D})$ is precovering and resolving, 
then all the conditions $(1)$--$(3)$  and $(5)$ are equivalent.
\end{enumerate}
\end{thm}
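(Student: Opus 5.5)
Throughout, write $\mathscr{G}:=r\mathcal{G}(\C,\D)$. I will first record the inclusions this hypothesis gives. Since $\C\subseteq{^\perp\D}$ and the trivial sequence $0\to C\to C\to 0$ is $\Hom_\A(-,\D)$-exact, we get $\C\subseteq\mathscr{G}$, and $\mathscr{G}\subseteq{^\perp\D}$ by definition. Since $\C\subseteq\D\cap{^\perp\D}$, $\C$ is self-orthogonal. Moreover, an exact $\C$-coresolution of $X$ all of whose cosyzygies lie in ${^\perp\D}$ is automatically $\Hom_\A(-,\D)$-exact. Hence $\mathscr{G}$ consists exactly of the objects of ${^\perp\D}$ admitting a $\C$-coresolution whose cosyzygies lie in ${^\perp\D}$, and every such cosyzygy is again in $\mathscr{G}$. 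I will use this description repeatedly.

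The easy implications come next. $(2)\Rightarrow(3)$ is trivial. For $(4)\Rightarrow(1)$: $\mathcal{I}(\A)\subseteq\id^{\leq n}=\C$-$\pd^{\leq n}$, and $\D\subseteq\D$-$\pd^{\leq n}=\id^{\leq n}$. For $(5)\Rightarrow(3)$: likewise $\mathcal{I}(\A)\subseteq\id^{\leq n}\subseteq\mathscr{G}$-$\pd^{\leq n}$ and $\D\subseteq\D$-$\pd^{\leq n}\subseteq\id^{\leq n}$.

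For (i), assume $\C$ is closed under kernels of epimorphisms and (1) holds. Then $\C$ satisfies condition $(a)$ of Lemma \ref{lem-3.1}, and $\mathcal{I}(\A)$ satisfies condition $(a)$ of Lemma \ref{lem-3.2}. Proposition \ref{prop-3.6}(1) with $\U=\C$, $\E=\mathcal{I}(\A)$, $\V=\D\supseteq\C$ then turns (1) into (4). For (ii), assume (3) holds and $\mathscr{G}$ is precovering and resolving. Proposition \ref{prop-3.4} with $\U=\mathscr{G}$ (condition $(b)$) and $\E=\mathcal{I}(\A)$ gives $\id^{<\infty}\subseteq\mathscr{G}$-$\pd^{\leq n}$. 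Proposition \ref{prop-3.5} with $\U=\D$ and $\E=\mathcal{I}(\A)$ (condition $(a)$) gives $\D$-$\pd^{<\infty}\subseteq\id^{\leq n}$. Together these yield (5).

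It remains to prove $(1)\Rightarrow(2)$ and $(3)\Rightarrow(1)$.

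\emph{Plan for $(1)\Rightarrow(2)$.} Let $M\in\A$ and take a projective resolution with $n$-th syzygy $K$, so $0\to K\to P_{n-1}\to\cdots\to P_0\to M\to 0$. Since $\mathcal{P}(\A)\subseteq\mathscr{G}$, it suffices to show $K\in\mathscr{G}$.
\begin{itemize}
\item Dimension shifting with $\id\D\leq n$ gives $\Ext^{i}_\A(K,\D)\cong\Ext^{i+n}_\A(M,\D)=0$ for all $i\geq 1$, so $K\in{^\perp\D}$.
\item By the description of $\mathscr{G}$ above, it remains to build a monomorphism $K\to C^0$ with $C^0\in\C$ whose cokernel is again an $n$-th syzygy of some object, modulo summands lying in ${^\perp\D}$. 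Iterating this step then produces the required coresolution.
\item To build that step, embed $M$ into an injective $E$ via $0\to M\to E\to M'\to 0$. By (1), $E$ has a $\C$-resolution of length $n$, and all its terms lie in ${^\perp\D}$.
\item Compare the two resolutions by a horseshoe/pullback argument. This should give $0\to K\to C\oplus Q\to K'\to 0$, where $C\in\C$, $Q$ is projective and $K'$ is an $n$-th syzygy of $M'$.
\item The projective summand $Q$ is then absorbed using its own $\C$-coresolution inside $\mathscr{G}$.
\end{itemize}
This bookkeeping is where I expect the main technical difficulty to lie. If needed, I will apply Lemma \ref{lem-3.10} to replace $M$ by $N=M\oplus X$ with $0\to N\to P\oplus I\to N\to 0$, so that syzygies and cosyzygies can be matched.

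\emph{Plan for $(3)\Rightarrow(1)$.} Only $\C$-$\pd\mathcal{I}(\A)\leq n$ must be shown. Since $\C$ is a self-orthogonal cogenerator of $\mathscr{G}$ contained in $\mathscr{G}$, an Auslander--Buchweitz type construction applies. For an injective $I$ with $\mathscr{G}$-$\pd I\leq n$, this should give $0\to I\to Y\to H\to 0$ with $\C$-$\pd Y\leq n$ and $H\in\mathscr{G}$. The sequence splits because $I$ is injective, so $I$ is a direct summand of $Y$.

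The obstacle is passing from "summand of an object of $\C$-$\pd^{\leq n}$" to $\C$-$\pd I\leq n$. $\C$ is only assumed closed under direct summands, not resolving, so Lemma \ref{lem-2.1} does not apply directly. My first attempt will use Lemma \ref{lem-3.10}-style Eilenberg swindles with countable sums and products (AB4/AB4$^*$) to realize $I$ itself as the end of a $\C$-resolution of length $n$, with last kernel $K\in\mathscr{G}$. I would then use $\Ext^{\geq1}_\A(\mathscr{G},\C)=0$ together with $\id\D\leq n$ to force $K\in\C$ via the summand-closure of $\C$.
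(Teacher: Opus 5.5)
\paragraph{Main gap: $(3)\Rightarrow(1)$.} Write $\mathscr{G}=r\mathcal{G}(\C,\D)$ as you do. The genuine gap is in $(3)\Rightarrow(1)$, a step the paper does not prove itself but takes from \cite[Proposition 4.6(1)]{GW}.

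Your hull $0\to I\to Y\to H\to 0$ only shows that $I$ is a direct summand of an object of $\C$-$\pd^{\leq n}$. Neither proposed repair closes this:
\begin{itemize}
\item An Eilenberg swindle needs $\C$ to be closed under countable coproducts or products, which is not assumed.
\item $\Ext^{\geq 1}_\A(\mathscr{G},\C)=0$ together with $\id\D\leq n$ does not force the last kernel $K\in\mathscr{G}$ of a $\C$-resolution of $I$ into $\C$. To split $0\to K\to C^0\to K^1\to 0$ you need $\Ext^1_\A(K^1,K)=0$. Dimension shifting along $0\to K\to C_{n-1}\to\cdots\to C_0\to I\to 0$ only kills $\Ext^{\geq n+1}_\A(K^1,K)$.
\end{itemize}

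The missing idea is to use the \emph{other} Auslander--Buchweitz approximation. The same induction (pushouts plus extension-closure of $\mathscr{G}$, \cite[Theorem 3.4]{GW}) gives $0\to Y_I\to X_I\to I\to 0$ with $X_I\in\mathscr{G}$ and $\C$-$\pd Y_I\leq n-1$. Since $\mathscr{G}\subseteq{}^{\perp}\D\subseteq{}^{\perp}\C$, dimension shifting gives $\Ext^{\geq 1}_\A(\mathscr{G},Y_I)=0$. As $I$ is injective, it follows that $X_I\in\mathscr{G}\cap\mathscr{G}^{\perp}$. Hence the cogenerator sequence $0\to X_I\to C\to X'\to 0$, with $C\in\C$ and $X'\in\mathscr{G}$, splits. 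So $X_I\in\C$ by summand-closure, and $\C$-$\pd I\leq n$. This uses only $\mathscr{G}$-$\pd I\leq n$, not $\id\D\leq n$.

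\paragraph{The step $(1)\Rightarrow(2)$ leaves its key point open.} Note first that $(1)\Rightarrow(3)$ is immediate because $\C\subseteq\mathscr{G}$. So it suffices to prove $(3)\Rightarrow(2)$, which is what the paper does:
\begin{itemize}
\item By Lemma \ref{lem-3.10} and the horseshoe lemma one gets $0\to K_n\to G_n\to K_n\to 0$. Here $K_n$ is an $n$-th syzygy of $M\oplus X$ and $G_n\in\mathscr{G}$, being an $n$-th syzygy of $P\oplus I$.
\item $K_n\in{}^{\perp}\D$ because $\id\D\leq n$.
\item Splicing copies gives a $\Hom_\A(-,\D)$-exact complex of $G_n$'s, so $K_n\in\mathscr{G}$ by the stability result \cite[Proposition 3.12]{GW}.
\item Lemma \ref{lem-2.1}(1) then removes $X$.
\end{itemize}

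Your mapping-cone iteration can be made to run. Chain maps from a resolution by objects $C\oplus Q$ ($C\in\C$, $Q$ projective) into a $\C$-resolution of the injective hull lift, because $\Ext^{\geq 1}_\A(\C,\C\text{-}\pd^{<\infty})=0$. However, the final ``absorption'' of the projective summands is not a routine trick. A naive horseshoe step would need $\Ext^1_\A(\mathscr{G},\mathcal{P}(\A))=0$, which is not available. The correct argument uses pushouts with general $\mathscr{G}$-terms and extension-closure of $\mathscr{G}$, and that is exactly the stability result. So you need it anyway, and the paper's Lemma \ref{lem-3.10} trick avoids all the iteration bookkeeping.

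\paragraph{Smaller points.}
\begin{itemize}
\item $(2)\Rightarrow(3)$ is not trivial: obtaining $\id\D\leq n$ requires $\Ext^{n+1}_\A(M,D)\cong\Ext^1_\A(G_n,D)=0$.
\item Applying Lemma \ref{lem-3.1}(b) in (ii) requires $\mathscr{G}$ to be closed under direct summands. This comes from \cite[Theorem 3.6]{GW}, not from the hypotheses of (ii).
\end{itemize}
The remaining implications, and your treatment of (i) and (ii) via Propositions \ref{prop-3.4}--\ref{prop-3.6}, match the paper.
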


\begin{proof}
By \cite[Proposition 4.6(1)]{GW}, we have $(1)\Longleftrightarrow (3)$.

$(2)\Longrightarrow (3)$
Let $M\in\mathscr{A}$. By (2), there exists an exact sequence
$$0\to G_n\to\cdots\to G_1\to G_0\to M\to 0$$
in $\mathscr{A}$ with all $G_i$ in $r\mathcal{G}(\mathscr{C},\mathscr{D})$. For any $D\in\mathscr{D}$,
applying the functor $\Hom_\mathscr{A}(-,D)$ to the above exact sequence yields
$$\Ext_\mathscr{A}^{n+1}(M,D)\cong\Ext_\mathscr{A}^1(G_n,D)=0,$$
and thus $\id D\leq n$.

$(3)\Longrightarrow (2)$
Let $M\in\mathscr{A}$. By Lemma \ref{lem-3.10}, there exists an exact sequence
$$0\to M\oplus X\to P\oplus I\to M\oplus X\to 0$$
in $\mathscr{A}$ with $P$ projective and $I$ injective.
By the horseshoe lemma, we get the following commutative diagram with exact columns and rows:
$$\xymatrix{& 0 \ar[d] & 0 \ar@{-->}[d] & 0 \ar[d] & \\
0 \ar@{-->}[r] & K_n \ar[d] \ar@{-->}[r] & G_n \ar@{-->}[d] \ar@{-->}[r] & K_n \ar[d] \ar@{-->}[r] & 0\\
0 \ar@{-->}[r] & Q_{n-1} \ar[d] \ar@{-->}[r] & Q_{n-1}\oplus Q_{n-1}
\ar@{-->}[d] \ar@{-->}[r] & Q_{n-1} \ar[d] \ar@{-->}[r] & 0\\
&\vdots \ar[d] & \vdots\ar@{-->}[d] & \vdots \ar[d] & \\
0 \ar@{-->}[r] & Q_{1} \ar[d] \ar@{-->}[r] & Q_{1}\oplus Q_{1}
\ar@{-->}[d] \ar@{-->}[r] & Q_{1} \ar[d] \ar@{-->}[r] & 0\\
0 \ar@{-->}[r] & Q_{0} \ar[d] \ar@{-->}[r] & Q_{0}\oplus Q_{0}
\ar@{-->}[d] \ar@{-->}[r] & Q_{0} \ar[d] \ar@{-->}[r] & 0\\
0 \ar[r] & M\oplus X \ar[d] \ar[r] & P\oplus I \ar@{-->}[d] \ar[r]
& M\oplus X \ar[d] \ar[r] & 0\\
& 0  & 0  & 0 & }$$
in $\mathscr{A}$ with all $Q_i$ projective. Since $P\in r\mathcal{G}(\mathscr{C},\mathscr{D})$ and
$r\mathcal{G}(\mathscr{C},\mathscr{D})$-$\pd I\leq n$ by assumption,
we have $r\mathcal{G}(\mathscr{C},\mathscr{D})$-$\pd (P\oplus I)\leq n$.
Since $\mathcal{P}(\mathscr{A})\subseteq r\mathcal{G}(\mathscr{C},\mathscr{D})$ by assumption,
it follows from \cite[Theorem 3.4]{GW} that $r\mathcal{G}(\mathscr{C},\mathscr{D})$ is resolving.
Then $G_n\in r\mathcal{G}(\mathscr{C},\mathscr{D})$ by \cite[Lemma 3.1(1)]{Hu3}.

Since $\id \mathscr{D}\leq n$ by (3),
according to the leftmost or rightmost column in the above diagram,
it is easy to get $K_n\in{^{\bot}\mathscr{D}}$ by dimension shifting. Then we get the following
$\Hom_\mathscr{A}(-,\mathscr{D})$-exact exact sequence:
$$\cdots\to G_n\to\cdots \to G_n\to G_n\to G_n\to\cdots\to G_n\to\cdots,$$
in which the image of each homomorphism is $K_n$. By \cite[Proposition 3.12]{GW}, we have
$K_n\in r\mathcal{G}(\mathscr{C},\mathscr{D})$.
According to the leftmost or rightmost column in the above diagram again, we have
$r\mathcal{G}(\mathscr{C},\mathscr{D})$-$\pd(M\oplus X)\leq n$. Since $r\mathcal{G}(\mathscr{C},\mathscr{D})$
is closed under direct summands by \cite[Theorem 3.6]{GW}, it follows from
Lemma \ref{lem-2.1}(1) that $r\mathcal{G}(\mathscr{C},\mathscr{D})$-$\pd M\leq n$.

It is easy to get $(4)\Longrightarrow (1)$ and $(5)\Longrightarrow (3)$.

Note that $\mathcal{I}(\A)$ is self-orthogonal and closed under cokernels of monomorphisms.

Assume that $\mathscr{C}$ is closed under kernels of epimorphisms. Since $\C$ is self-orthogonal by assumption,
the implication $(1)\Longrightarrow (4)$ follows by putting $\U=\C$, $\V=\D$ and $\E=\mathcal{I}(\A)$ in Proposition \ref{prop-3.6}(1).

Assume that $r\mathcal{G}(\mathscr{C},\mathscr{D})$ is precovering and resolving. By \cite[Theorem 3.6]{GW}, 
we have that $r\mathcal{G}(\mathscr{C},\mathscr{D})$ is closed under direct summands. If (3) holds true,
then $r\mathcal{G}(\mathscr{C},\mathscr{D})$-$\pd \mathcal{I}(\mathscr{A})\leq n$, and hence 
$$\id^{\leq n}\subseteq r\mathcal{G}(\mathscr{C},\mathscr{D})\text{-}\pd^{\leq n}$$
by putting $\U=r\mathcal{G}(\mathscr{C},\mathscr{D})$ and $\E=\mathcal{I}(\A)$ in Proposition \ref{prop-3.4}. 
On the other hand, we have 
$$\D\text{-}\pd^{\leq n}\subseteq\id^{\leq n}$$
by putting $\U=\D$ and $\E=\mathcal{I}(\A)$ in Proposition \ref{prop-3.5}, and thus the assertion (5) follows.
This proves $(3)\Longrightarrow (5)$. 
\end{proof}

The following result is a special case of Theorem \ref{thm-3.11}.

\begin{cor} \label{cor-3.12}
Let $\mathscr{C}$ be a self-orthogonal subcategory of $\mathscr{A}$ 
closed under direct summands. If
$\mathcal{P}(\mathscr{A})\subseteq r\mathcal{G}(\mathscr{C})$, 
then for any $n\ge 0$, the following statements are equivalent.
\begin{enumerate}
\item[{\rm (1)}] $\A$ is right $n$-$\C$-Gorenstein  
$($that is, $\mathscr{C}$-$\pd \mathcal{I}(\mathscr{A})\leq n$ and $\id \mathscr{C}\leq n)$.
\item[{\rm (2)}] $r\mathcal{G}(\mathscr{C})$-$\pd M\leq n$ for any $M\in\mathscr{A}$.
\item[{\rm (3)}] $r\mathcal{G}(\mathscr{C})$-$\pd \mathcal{I}(\mathscr{A})\leq n$ and
$\id \mathscr{C}\leq n$.
\end{enumerate}
Furthermore, if $\mathscr{C}$ is closed kernels of epimorphisms, then all the above and below conditions are equivalent.
\begin{enumerate}
\item[{\rm (4)}] $\mathscr{C}$-$\pd^{\leq n}=\id^{\leq n}$.
\end{enumerate}
\end{cor}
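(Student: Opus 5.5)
The plan is to specialize Theorem \ref{thm-3.11} to $\D=\C$. First I check its hypotheses. Since $\C$ is self-orthogonal, $\C\subseteq{^\perp\C}$, so $\C\subseteq\C\cap{^\perp\C}=\D\cap{^\perp\D}$. By Definition \ref{def-2.5}, $r\mathcal{G}(\C,\C)={^\perp\C}\cap\widetilde{\cores_{\C}\C}=r\mathcal{G}(\C)$. Hence the assumption $\mathcal{P}(\A)\subseteq r\mathcal{G}(\C)$ is exactly $\mathcal{P}(\A)\subseteq r\mathcal{G}(\C,\D)$. The remaining hypothesis, that $\C$ is closed under direct summands, is assumed in the corollary.

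With $\D=\C$, conditions (1)--(3) of the corollary become conditions (1)--(3) of Theorem \ref{thm-3.11}. That theorem gives $(1)\Leftrightarrow(2)\Leftrightarrow(3)$ with no extra hypothesis, which proves the first part.

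For the second part, assume $\C$ is closed under kernels of epimorphisms. Theorem \ref{thm-3.11}(i) then makes (1)--(4) of the theorem equivalent. Its condition (4) reads $\C$-$\pd^{\leq n}=\id^{\leq n}=\C$-$\pd^{\leq n}$, which is literally condition (4) of the corollary. This completes the argument.

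There is no real obstacle here. The only point needing care is the identification $r\mathcal{G}(\C,\C)=r\mathcal{G}(\C)$, which follows directly from the definitions, since $\widetilde{\cores\C}$ means $\widetilde{\cores_{\C}\C}$.
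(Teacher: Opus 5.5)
Your proposal is correct and matches the paper's approach: the paper presents this corollary as the special case $\D=\C$ of Theorem \ref{thm-3.11}, and your checks are exactly the needed ones. Self-orthogonality gives $\C\subseteq\C\cap{^\perp\C}$, the identity $r\mathcal{G}(\C,\C)=r\mathcal{G}(\C)$ holds by definition, and condition (4) of the theorem collapses to $\C$-$\pd^{\leq n}=\id^{\leq n}$.
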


We also have the following corollary.

\begin{cor} \label{cor-3.13}
Let $\mathscr{D}$ be a subcategory of $\mathscr{A}$ containing $\mathcal{P}(\A)$. 
Then for any $n\ge 0$, the following statements are equivalent.
\begin{enumerate}
\item[{\rm (1)}] $\A$ is right $n$-$(\mathcal{P}(\A),\D)$-Gorenstein  
$($that is, $\pd \mathcal{I}(\mathscr{A})\leq n$ and $\id \mathscr{D}\leq n)$.
\item[{\rm (2)}] $r\mathcal{G}((\mathcal{P}(\A),\D))$-$\pd M\leq n$ for any $M\in\mathscr{A}$.
\item[{\rm (3)}] $r\mathcal{G}((\mathcal{P}(\A),\D))$-$\pd \mathcal{I}(\mathscr{A})\leq n$ and
$\id \mathscr{D}\leq n$.
\item[{\rm (4)}] $\mathscr{D}$-$\pd^{\leq n}=\id^{\leq n}=\pd^{\leq n}$.
\end{enumerate}
\end{cor}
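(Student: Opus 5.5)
The plan is to obtain Corollary \ref{cor-3.13} as a direct specialization of Theorem \ref{thm-3.11}(i), taking $\C=\mathcal{P}(\A)$. First I check the hypotheses of Theorem \ref{thm-3.11}.

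\begin{enumerate}
\item[(a)] $\mathcal{P}(\A)$ is closed under direct summands, and it is closed under kernels of epimorphisms, since an epimorphism between projectives splits.
\item[(b)] $\mathcal{P}(\A)\subseteq\D$ holds by assumption.
\item[(c)] $\mathcal{P}(\A)\subseteq{^\perp\D}$ holds trivially, because $\Ext^{\geq1}_\A(P,-)=0$ for any projective $P$.
\end{enumerate}

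So $\C=\mathcal{P}(\A)\subseteq\D\cap{^\perp\D}$, and the remaining hypothesis to verify is $\mathcal{P}(\A)\subseteq r\mathcal{G}(\mathcal{P}(\A),\D)={^\perp\D}\cap\widetilde{\cores_\D\mathcal{P}(\A)}$.

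For $P$ projective we already know $P\in{^\perp\D}$. For the coresolution, I would take the trivial sequence $0\to P\xrightarrow{1}P\to 0\to 0\to\cdots$. It is exact, all its terms lie in $\mathcal{P}(\A)$, and it splits, so it is $\Hom_\A(-,\D)$-exact. Hence $P\in\widetilde{\cores_\D\mathcal{P}(\A)}$, and therefore $\mathcal{P}(\A)\subseteq r\mathcal{G}(\mathcal{P}(\A),\D)$.

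Since $\C=\mathcal{P}(\A)$ is closed under kernels of epimorphisms, Theorem \ref{thm-3.11}(i) gives the equivalence of its conditions (1)--(4). With $\C=\mathcal{P}(\A)$ these read:

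\begin{enumerate}
\item[(1)] The condition ``$\C$-$\pd\mathcal{I}(\A)\leq n$'' becomes $\pd\mathcal{I}(\A)\leq n$, which is exactly condition (1) of the corollary.
\item[(2), (3)] These are conditions (2) and (3) of the corollary verbatim.
\item[(4)] The condition ``$\C$-$\pd^{\leq n}$'' becomes $\mathcal{P}(\A)$-$\pd^{\leq n}=\pd^{\leq n}$ by the paper's convention $\pd A:=\mathcal{P}(\A)$-$\pd A$, which yields condition (4) of the corollary.
\end{enumerate}

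This completes the argument. None of the steps is a real obstacle; the only point needing care is the membership $\mathcal{P}(\A)\subseteq r\mathcal{G}(\mathcal{P}(\A),\D)$, and there the trivial split coresolution suffices.
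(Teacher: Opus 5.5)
Your proposal is correct and is the paper's argument: the paper also proves the corollary by putting $\C=\mathcal{P}(\A)$ in Theorem~\ref{thm-3.11}, remarking only that $\mathcal{P}(\A)\subseteq r\mathcal{G}(\mathcal{P}(\A),\D)$ is trivial. Your explicit checks just fill in details the paper leaves implicit, namely that $\mathcal{P}(\A)$ is closed under kernels of epimorphisms (needed for part (i) to include condition (4)) and the split coresolution $0\to P\to P\to 0$.
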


\begin{proof}
It is trivial that $\mathcal{P}(\A)\subseteq r\mathcal{G}((\mathcal{P}(\A),\D))$. 
Putting $\C=\mathcal{P}(\A)$ in Theorem \ref{thm-3.11}, the assertion follows.
\end{proof}

The following result is dual to Theorem \ref{thm-3.11}, we omit its proof. 

\begin{thm} \label{thm-3.14}
Let $\mathscr{C}$ and $\mathscr{D}$ be subcategories of $\mathscr{A}$ such that
$\mathscr{C}$ is closed under direct summands. Assume that
$$\mathscr{C}\subseteq\mathscr{D}\cap{\mathscr{D}^{\bot}}\ \ and\ \
\mathcal{I}(\mathscr{A})\subseteq l\mathcal{G}(\mathscr{C},\mathscr{D})$$
and $n\ge 0$. Consider the following conditions.
\begin{enumerate}
\item[{\rm (1)}] $\A$ is left $n$-$(\C,\D)$-Gorenstein 
$($that is, $\mathscr{C}$-$\id \mathcal{P}(\mathscr{A})\leq n$ and $\pd \mathscr{D}\leq n)$.
\item[{\rm (2)}] $l\mathcal{G}(\mathscr{C},\mathscr{D})$-$\id M\leq n$ for any $M\in\mathscr{A}$.
\item[{\rm (3)}] $l\mathcal{G}(\mathscr{C},\mathscr{D})$-$\id \mathcal{P}(\mathscr{A})\leq n$
and $\pd \mathscr{D}\leq n$.
\item[{\rm (4)}] $\mathscr{D}$-$\id^{\leq n}=\pd^{\leq n}=\mathscr{C}$-$\id^{\leq n}$.
\item[{\rm (5)}] $\mathscr{D}$-$\id^{\leq n}\subseteq\pd^{\leq n}\subseteq 
l\mathcal{G}(\mathscr{C},\mathscr{D})$-$\id^{\leq n}$.
\end{enumerate}
It holds that $(4)\Longrightarrow (1)\Longleftrightarrow (2)\Longleftrightarrow (3)\Longleftarrow (5)$.
\begin{enumerate}
\item[{\rm (i)}] 
If $\mathscr{C}$ is closed under cokernels of monomorphisms, then all the conditions $(1)$--$(4)$ are equivalent.
\item[{\rm (ii)}] 
If $l\mathcal{G}(\mathscr{C},\mathscr{D})$ is preenveloping and coresolving, 
then all the conditions $(1)$--$(3)$  and $(5)$ are equivalent.
\end{enumerate}
\end{thm}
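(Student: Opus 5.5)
We dualize the proof of Theorem \ref{thm-3.11} step by step. The equivalence $(1)\Leftrightarrow(3)$ is the dual of \cite[Proposition 4.6(1)]{GW}, namely its part (2) for left Gorenstein subcategories. It rests on $\mathscr{C}\subseteq\mathscr{D}\cap\mathscr{D}^\perp$ and on $l\mathcal{G}(\C,\D)$ being coresolving when $\mathcal{I}(\A)\subseteq l\mathcal{G}(\C,\D)$.

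For $(2)\Rightarrow(3)$, take $M$ and $D\in\D$. Choose an exact sequence $0\to M\to G^0\to\cdots\to G^n\to0$ with $G^i\in l\mathcal{G}(\C,\D)\subseteq\D^\perp$. Dimension shifting gives $\Ext^{n+1}_\A(D,M)\cong\Ext^1_\A(D,G^n)=0$, so $\pd D\le n$; the bound on $\mathcal{P}(\A)$ is trivial.

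For $(3)\Rightarrow(2)$ we use Lemma \ref{lem-3.10} again: $0\to M\oplus X\to P\oplus I\to M\oplus X\to0$. Applying the dual horseshoe lemma with injective coresolutions gives columns $0\to M\oplus X\to E^0\to\cdots\to E^{n-1}\to K^n\to0$ and a middle column ending in $G^n$, with $0\to K^n\to G^n\to K^n\to0$ exact. Since $I\in l\mathcal{G}(\C,\D)$ and $l\mathcal{G}(\C,\D)$-$\id P\le n$, the middle object has $l\mathcal{G}$-injective dimension $\le n$. Coresolvingness (dual of \cite[Theorem 3.4]{GW}) and the dual of \cite[Lemma 3.1(1)]{Hu3} then give $G^n\in l\mathcal{G}(\C,\D)$. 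From $\pd\D\le n$, dimension shifting gives $K^n\in\D^\perp$. Splicing copies of $0\to K^n\to G^n\to K^n\to0$ yields a $\Hom_\A(\D,-)$-exact complex $\cdots\to G^n\to G^n\to\cdots$ with all images $K^n$. The dual of \cite[Proposition 3.12]{GW} then gives $K^n\in l\mathcal{G}(\C,\D)$, so $l\mathcal{G}$-$\id(M\oplus X)\le n$. Since $l\mathcal{G}(\C,\D)$ is closed under direct summands (dual of \cite[Theorem 3.6]{GW}), Lemma \ref{lem-2.1}(2) gives $l\mathcal{G}$-$\id M\le n$. We expect this step to be the main obstacle, because it needs the dual statements of the cited results from \cite{GW}; these are available there, or follow by the same arguments.

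The implications $(4)\Rightarrow(1)$ and $(5)\Rightarrow(3)$ are immediate. For (i): $\C$ is self-orthogonal and closed under cokernels of monomorphisms, and $\mathcal{P}(\A)$ is self-orthogonal and closed under kernels of epimorphisms. Hence Proposition \ref{prop-3.6}(2) with $\E=\C$, $\U=\mathcal{P}(\A)$, $\F=\D$ gives $(1)\Rightarrow(4)$. For (ii): under (3), Proposition \ref{prop-3.5} with $\E=l\mathcal{G}(\C,\D)$ (preenveloping, coresolving, closed under summands) and $\U=\mathcal{P}(\A)$ gives $\pd^{\le n}\subseteq l\mathcal{G}$-$\id^{\le n}$. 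Proposition \ref{prop-3.4} with $\U=\mathcal{P}(\A)$ and $\E=\D$ gives $\D$-$\id^{\le n}\subseteq\pd^{\le n}$, which proves (5).
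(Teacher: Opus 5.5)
Your proposal is correct and follows essentially the same approach as the paper, which omits the proof of this statement as the dual of Theorem \ref{thm-3.11}. You dualize that proof faithfully: the duals of the cited results from \cite{GW} and \cite{Hu3}, Lemma \ref{lem-3.10} with the dual horseshoe lemma for $(3)\Longrightarrow(2)$, and Propositions \ref{prop-3.4}--\ref{prop-3.6} with $\mathscr{U}=\mathcal{P}(\mathscr{A})$ for (i) and (ii).
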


The following result is a special case of Theorem \ref{thm-3.14}.

\begin{cor} \label{cor-3.15}
Let $\mathscr{C}$ be a self-orthogonal subcategory of $\mathscr{A}$  
closed under direct summands. If
$\mathcal{I}(\mathscr{A})\subseteq l\mathcal{G}(\mathscr{C})$, 
then for any $n\ge 0$, the following statements are equivalent.
\begin{enumerate}
\item[{\rm (1)}] $\A$ is left $n$-$\C$-Gorenstein 
$($that is, $\mathscr{C}$-$\id \mathcal{P}(\mathscr{A})\leq n$ and $\pd \mathscr{C}\leq n)$.
\item[{\rm (2)}] $l\mathcal{G}(\mathscr{C})$-$\id M\leq n$ for any $M\in\mathscr{A}$.
\item[{\rm (3)}] $l\mathcal{G}(\mathscr{C})$-$\id \mathcal{P}(\mathscr{A})\leq n$
and $\pd \mathscr{C}\leq n$.
\end{enumerate}
Furthermore, if $\mathscr{C}$ is closed cokernels of monomorphisms, then all the above and below conditions are equivalent.
\begin{enumerate}
\item[{\rm (4)}] $\mathscr{C}$-$\id^{\leq n}=\pd^{\leq n}$.
\end{enumerate}
\end{cor}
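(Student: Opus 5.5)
This is a special case of Theorem \ref{thm-3.14}, so I will obtain it by putting $\mathscr{D}=\mathscr{C}$ there, in the same way Corollary \ref{cor-3.12} follows from Theorem \ref{thm-3.11}.

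With $\D=\C$, Definition \ref{def-2.5} gives $l\mathcal{G}(\C,\C)={\C^\perp}\cap\widetilde{\res_{\C}\C}=l\mathcal{G}(\C)$. Thus the hypothesis $\mathcal{I}(\A)\subseteq l\mathcal{G}(\C)$ is exactly the hypothesis $\mathcal{I}(\A)\subseteq l\mathcal{G}(\C,\D)$ of Theorem \ref{thm-3.14}. The other hypothesis there is $\C\subseteq\D\cap\D^{\perp}=\C\cap\C^{\perp}$, which amounts to $\C\subseteq\C^\perp$. This is precisely the self-orthogonality of $\C$, since $\C\subseteq{^\perp\C}$ is equivalent to $\C\subseteq\C^\perp$. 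Closure of $\C$ under direct summands is assumed in both statements.

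Under this substitution, conditions (1), (2) and (3) of Theorem \ref{thm-3.14} become verbatim conditions (1), (2) and (3) of the corollary. That theorem yields $(1)\Leftrightarrow(2)\Leftrightarrow(3)$ unconditionally, which proves the first part.

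For the second part, assume that $\C$ is closed under cokernels of monomorphisms. Then Theorem \ref{thm-3.14}(i) shows that (1)--(4) are equivalent, where (4) reads $\C\text{-}\id^{\leq n}=\pd^{\leq n}=\C\text{-}\id^{\leq n}$. Its outer two terms coincide, so it says exactly $\C\text{-}\id^{\leq n}=\pd^{\leq n}$, which is condition (4) of the corollary.

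There is no real obstacle. The only points needing care are the identification $l\mathcal{G}(\C,\C)=l\mathcal{G}(\C)$ and the equivalence of the orthogonality hypothesis with self-orthogonality, and both follow immediately from the definitions.
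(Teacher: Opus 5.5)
Your proposal is correct and is exactly the paper's approach: the paper gives no separate proof and simply presents the corollary as the special case $\mathscr{D}=\mathscr{C}$ of Theorem \ref{thm-3.14}. Your checks that $l\mathcal{G}(\mathscr{C},\mathscr{C})=l\mathcal{G}(\mathscr{C})$ and that $\mathscr{C}\subseteq\mathscr{C}\cap\mathscr{C}^{\perp}$ amounts to self-orthogonality are the only points needed.
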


We also have the following corollary, which is dual to Corollary \ref{cor-3.13}.

\begin{cor} \label{cor-3.16}
Let $\mathscr{D}$ be a subcategory of $\mathscr{A}$ containing $\mathcal{I}(\A)$. 
Then for any $n\ge 0$, the following statements are equivalent.
\begin{enumerate}
\item[{\rm (1)}] $\A$ is left $n$-$(\mathcal{I}(\A),\D)$-Gorenstein  
$($that is, $\id \mathcal{P}(\mathscr{A})\leq n$ and $\pd \mathscr{D}\leq n)$.
\item[{\rm (2)}] $l\mathcal{G}((\mathcal{I}(\A),\D))$-$\id M\leq n$ for any $M\in\mathscr{A}$.
\item[{\rm (3)}] $l\mathcal{G}((\mathcal{I}(\A),\D))$-$\id \mathcal{P}(\mathscr{A})\leq n$ and
$\pd \mathscr{D}\leq n$.
\item[{\rm (4)}] $\mathscr{D}$-$\id^{\leq n}=\pd^{\leq n}=\id^{\leq n}$.
\end{enumerate}
\end{cor}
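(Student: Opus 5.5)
The plan is to obtain Corollary \ref{cor-3.16} by specializing Theorem \ref{thm-3.14} to $\C=\mathcal{I}(\A)$. It is the exact dual of the argument for Corollary \ref{cor-3.13}.

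First I check the hypotheses of Theorem \ref{thm-3.14}.
\begin{enumerate}
\item[(a)] $\mathcal{I}(\A)$ is closed under direct summands.
\item[(b)] $\mathcal{I}(\A)\subseteq\D$ holds by assumption.
\item[(c)] $\mathcal{I}(\A)\subseteq\D^{\perp}$ holds trivially, since $\Ext^{\geq 1}_\A(D,I)=0$ for every injective $I$ and every object $D$. Hence $\C\subseteq\D\cap\D^{\perp}$.
\item[(d)] For $\mathcal{I}(\A)\subseteq l\mathcal{G}(\mathcal{I}(\A),\D)$, take $I$ injective. Then $I\in\D^{\perp}$ as in (c). Moreover, the sequence $\cdots\to 0\to I\xrightarrow{1} I\to 0$ is an $\mathcal{I}(\A)$-resolution of $I$, and it is split exact, so it is $\Hom_\A(\D,-)$-exact. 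Thus $I\in\widetilde{\res_{\D}\mathcal{I}(\A)}$.
\end{enumerate}

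Theorem \ref{thm-3.14} then gives the equivalence of conditions (1), (2) and (3) at once. Here $\C$-$\id\mathcal{P}(\A)$ becomes $\id\mathcal{P}(\A)$, because $\mathcal{I}(\A)$-$\id$ is just $\id$.

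For (4), I use part (i) of Theorem \ref{thm-3.14}. This requires $\C=\mathcal{I}(\A)$ to be closed under cokernels of monomorphisms. This holds because any monomorphism $I\to J$ between injectives splits, so its cokernel is a direct summand of $J$ and is therefore injective. Part (i) then adds condition (4) in the form $\D$-$\id^{\leq n}=\pd^{\leq n}=\mathcal{I}(\A)$-$\id^{\leq n}$, and the last term is $\id^{\leq n}$.

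I expect no real obstacle. The only point needing a moment's care is the membership $\mathcal{I}(\A)\subseteq l\mathcal{G}(\mathcal{I}(\A),\D)$, and that reduces to a trivial split resolution.
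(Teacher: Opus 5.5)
Your proposal is correct and follows the paper's route. The paper treats Corollary \ref{cor-3.16} as the dual of Corollary \ref{cor-3.13}: it notes that $\mathcal{I}(\A)\subseteq l\mathcal{G}(\mathcal{I}(\A),\D)$ trivially and puts $\C=\mathcal{I}(\A)$ in Theorem \ref{thm-3.14}. Your explicit checks of the other hypotheses ($\mathcal{I}(\A)\subseteq\D\cap\D^{\perp}$, closure under direct summands and under cokernels of monomorphisms) spell out what the paper leaves implicit.
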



We write
$$\Gpd M:=r{\mathcal{G}}(\mathcal{P}(\A)){\text -}\pd M\ \ 
\text{and}\ \ \Gid N:=l{\mathcal{G}}(\mathcal{I}(\A)){\text -}\id N.$$
As a consequence of Corollaries \ref{cor-3.12} and \ref{cor-3.15} (or Corollaries \ref{cor-3.13} and \ref{cor-3.16}), 
we obtain some equivalent characterization of $n$-Gorenstein categories as follows, which extends 
\cite[Proposition VII.2.4(i)]{BR}.

\begin{thm} \label{thm-3.17}
For any $n\ge 0$, the following statements are equivalent.
\begin{enumerate}
\item[{\rm (1)}] $\A$ is $n$-Gorenstein $($that is, $\pd \mathcal{I}(\A)=\id\mathcal{P}(\A)\leq n)$.
\item[{\rm (2)}] $\Gpd M\leq n$ for any $M\in\A$.
\item[{\rm (3)}] $\Gid N\leq n$ for any $N\in\A$.
\item[{\rm (4)}] $\Gpd\mathcal{I}(\A)\leq n$ and $\id\mathcal{P}(\A)\leq n$.
\item[{\rm (5)}] $\Gid\mathcal{P}(\A)\leq n$ and $\pd\mathcal{I}(\A)\leq n$.
\item[{\rm (6)}] $\pd^{\leq n}=\id^{\leq n}$.
\end{enumerate}
\end{thm}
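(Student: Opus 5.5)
The plan is to specialize Corollaries \ref{cor-3.12} and \ref{cor-3.15} to $\C=\mathcal{P}(\A)$ and $\C=\mathcal{I}(\A)$, respectively, and then link the two chains through condition (1) and condition (6).

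\emph{The projective side.} Put $\C=\mathcal{P}(\A)$ in Corollary \ref{cor-3.12}. The hypotheses hold:
\begin{itemize}
\item $\mathcal{P}(\A)$ is self-orthogonal, closed under direct summands, and closed under kernels of epimorphisms, since an epimorphism between projectives splits;
\item $\mathcal{P}(\A)\subseteq r\mathcal{G}(\mathcal{P}(\A))$, because each projective $P$ lies in ${^\perp\mathcal{P}(\A)}$, and the sequence $0\to P\to P\to 0\to\cdots$ is $\Hom_\A(-,\mathcal{P}(\A))$-exact, so $P\in\widetilde{\cores\mathcal{P}(\A)}$.
\end{itemize}
By Example \ref{exa-3.9}, being right $n$-$\mathcal{P}(\A)$-Gorenstein means $\pd\mathcal{I}(\A)\leq n$ and $\id\mathcal{P}(\A)\leq n$. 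This is exactly condition (1) of Theorem \ref{thm-3.17}, once we note that (1) was stated as $\pd\mathcal{I}(\A)=\id\mathcal{P}(\A)\leq n$, and by \cite[Proposition VII.1.3(vi)]{BR} both dimensions being finite forces them to be equal. Corollary \ref{cor-3.12} then gives:
\begin{itemize}
\item its (1)$\Leftrightarrow$(2) is (1)$\Leftrightarrow$(2) here;
\item its (1)$\Leftrightarrow$(3) is (1)$\Leftrightarrow$(4) here;
\item its (1)$\Leftrightarrow$(4) is (1)$\Leftrightarrow$(6) here, since $\mathcal{P}(\A)$-$\pd^{\leq n}=\pd^{\leq n}$.
\end{itemize}

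\emph{The injective side.} Dually, put $\C=\mathcal{I}(\A)$ in Corollary \ref{cor-3.15}. Here:
\begin{itemize}
\item $\mathcal{I}(\A)$ is self-orthogonal, closed under direct summands, and closed under cokernels of monomorphisms, since such monomorphisms split;
\item $\mathcal{I}(\A)\subseteq l\mathcal{G}(\mathcal{I}(\A))$, by the trivial resolution $\cdots\to0\to I\to I\to 0$.
\end{itemize}
Being left $n$-$\mathcal{I}(\A)$-Gorenstein means $\id\mathcal{P}(\A)\leq n$ and $\pd\mathcal{I}(\A)\leq n$, which is again condition (1). Corollary \ref{cor-3.15} then gives (1)$\Leftrightarrow$(3)$\Leftrightarrow$(5), and once more (1)$\Leftrightarrow$(6).

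\emph{Expected obstacle.} The only delicate point is the equality in (1): in the ``$\Leftarrow$'' directions we only get $\pd\mathcal{I}(\A)\leq n$ and $\id\mathcal{P}(\A)\leq n$, not equality. This is handled by the cited \cite[Proposition VII.1.3(vi)]{BR}, or equivalently by reading (1) as ``both at most $n$'' as in Example \ref{exa-3.9}. Everything else is a direct specialization, so I expect no further difficulty.
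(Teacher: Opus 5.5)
Your proposal is correct and follows the paper's proof: it specializes Corollary \ref{cor-3.12} to $\C=\mathcal{P}(\A)$ to get $(1)\Leftrightarrow(2)\Leftrightarrow(4)\Leftrightarrow(6)$, and Corollary \ref{cor-3.15} to $\C=\mathcal{I}(\A)$ to get $(1)\Leftrightarrow(3)\Leftrightarrow(5)\Leftrightarrow(6)$. You also check the hypotheses (self-orthogonality, closure properties, $\mathcal{P}(\A)\subseteq r\mathcal{G}(\mathcal{P}(\A))$, $\mathcal{I}(\A)\subseteq l\mathcal{G}(\mathcal{I}(\A))$) and explain why ``both dimensions at most $n$'' gives the stated equality, two points the paper leaves implicit or handles in Example \ref{exa-3.9}.
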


\begin{proof}
Putting $\mathscr{C}=\mathcal{P}(\A)$ in Corollary \ref{cor-3.12}, 
we get $(1)\Longleftrightarrow (2)\Longleftrightarrow (4)\Longleftrightarrow (6)$. 
Putting $\mathscr{C}=\mathcal{I}(\A)$ in Corollary \ref{cor-3.15}, 
we get $(1)\Longleftrightarrow (3)\Longleftrightarrow (5)\Longleftrightarrow (6)$.
\end{proof}

Finally, we list other cases that satisfy the conditions in Corollaries \ref{cor-3.12} and \ref{cor-3.15}.

\begin{rem}\label{rem-3.18}
{\rm Let $R$ be a ring and let $(\U,\V)$ be a hereditary cotorsion pair in $\Mod R$ with $\C:=\U\cap\V$. 
\begin{enumerate}
\item[{\rm (1)}]
If $\mathcal{P}(R)\subseteq\C$, then $\mathcal{P}(R)\subseteq r\mathcal{G}(\mathscr{C})$
and the conditions in Corollary \ref{cor-3.12} are satisfied.
For example, by \cite[Corollary 3.4(1)]{CS}, we have that $(\mathcal{G}(\mathcal{P}(R)),{\mathcal{G}(\mathcal{P}(R))^{\bot}})$
is a hereditary cotorsion pair and $\mathcal{P}(R)\subseteq\C:=\mathcal{G}(\mathcal{P}(R))\cap{\mathcal{G}(\mathcal{P}(R))^{\bot}}$.
\item[{\rm (2)}] If $\mathcal{I}(R)\subseteq\C$, then $\mathcal{I}(R)\subseteq l\mathcal{G}(\mathscr{C})$
and the conditions in Corollary \ref{cor-3.15} are satisfied.
For example, by \cite[Theorem 5.6]{SS}, we have that $({^{\bot}\mathcal{G}(\mathcal{I}(R))},\mathcal{G}(\mathcal{I}(R)))$
is a hereditary cotorsion pair and $\mathcal{I}(R)\subseteq\C:={^{\bot}\mathcal{G}(\mathcal{I}(R))}\cap\mathcal{G}(\mathcal{I}(R))$.
\end{enumerate}}
\end{rem}

\section{Categories of interest}

In this section, $R$ is an arbitrary ring and $_RC$ is Wakamatsu tilting module with $S=\End(_RC)$.
For any subcategory $\X$ of $\Mod R$ and $n\geq 0$, we use $\X\text{-}\pd^{\leq n}(R)$ and $\X\text{-}\id^{\leq n}(R)$
to denote the subcategories of $\Mod R$ consisting of modules with $\X$-projective and $\X$-injective dimensions
at most $n$, respectively. We write 
$$(-)^+:=\Hom_{\mathbb{Z}}(-,\mathbb{Q}/\mathbb{Z}),$$
where $\mathbb{Z}$ is the additive group of integers and $\mathbb{Q}$ is the additive group of rational numbers.


\begin{prop} \label{prop-4.1}
Let $\X$ be a coresolving subcategory of $\Mod R$ closed under direct summands, and 
let $\Y$ be a resolving subcategory of $\Mod R^{op}$ closed under direct summands.
Assume that the following conditions are satisfied:
\begin{enumerate}
\item[$(a)$]  For a module $X\in\Mod R$, it holds that  $X\in\X$ if and only if $X^+\in\Y$;
\item[$(b)$] For a module $Y\in\Mod R^{op}$, it holds that $Y\in\Y$ if and only if $Y^+\in\X$.
\end{enumerate} 
Then we have
\begin{enumerate}
\item[{\rm (1)}] $\X$-$\id M=\Y$-$\pd M^+$ for any $M\in\Mod R$.
\item[{\rm (2)}] $\Y$-$\pd N=\X$-$\id N^+$ for any $N\in\Mod R^{op}$.
\end{enumerate}
\end{prop}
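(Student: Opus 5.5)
We prove (1); (2) is symmetric, with the roles of $\X$ and $\Y$ exchanged and conditions (a), (b) swapped. The plan is to prove the two inequalities $\Y$-$\pd M^+\leq\X$-$\id M$ and $\X$-$\id M\leq\Y$-$\pd M^+$ separately. The tools are three: $(-)^+$ is exact and faithful (it reflects exactness); a resolving (respectively, coresolving) subcategory closed under direct summands admits the usual dimension-shifting ("syzygy-independence") characterisation of relative dimension; and conditions (a) and (b).

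First we show $\Y$-$\pd M^+\leq\X$-$\id M$. We may assume $\X$-$\id M=n<\infty$, so there is an exact sequence $0\to M\to X^0\to\cdots\to X^n\to 0$ with all $X^i\in\X$. Applying the exact functor $(-)^+$ gives an exact sequence
$$0\to (X^n)^+\to\cdots\to (X^0)^+\to M^+\to 0$$
in $\Mod R^{op}$, and each $(X^i)^+\in\Y$ by (a). Hence $\Y$-$\pd M^+\leq n$.

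For the reverse inequality, assume $\Y$-$\pd M^+=n<\infty$. Since $\Mod R$ has enough injectives and $\X$ is coresolving, we take an injective coresolution $0\to M\to I^0\to\cdots\to I^{n-1}\to K\to 0$ with all $I^i\in\mathcal{I}(R)\subseteq\X$. The goal is to show $K\in\X$. Dualising gives an exact sequence
$$0\to K^+\to (I^{n-1})^+\to\cdots\to (I^0)^+\to M^+\to 0.$$
Each $(I^i)^+$ is flat. The main point to verify is that flat right modules lie in $\Y$: applying (b) to a flat $F$, we need $F^+\in\X$, and $F^+$ is injective, hence in $\X$. So $(I^i)^+\in\Y$. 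Now $\Y$ is resolving and closed under direct summands, and $\Y$-$\pd M^+\leq n$, so the standard result (\cite[Theorem 3.9]{Hu2}, the same ingredient used in Lemma \ref{lem-2.1}) gives that the $n$-th syzygy in any resolution of $M^+$ by objects of $\Y$ lies in $\Y$. Therefore $K^+\in\Y$, so $K\in\X$ by (a), and $\X$-$\id M\leq n$.

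The main obstacle is this syzygy-independence step. It requires that $\Y$-$\pd$ be computable from an arbitrary $\Y$-resolution, not only from a projective one. That is exactly where the hypotheses "resolving and closed under direct summands" enter, via \cite[Theorem 3.9]{Hu2}, or equivalently via a Schanuel-type argument comparing with a given $\Y$-resolution of length $n$. The dual statement for coresolving $\X$ (\cite[Theorem 4.9]{Hu2}) is needed in the symmetric proof of (2), where injective left modules must be shown to lie in $\X$ by the analogous argument. Combining the two inequalities yields (1), and (2) follows symmetrically.
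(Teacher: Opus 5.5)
Your outline matches the paper's proof step for step. For $\Y$-$\pd M^+\leq\X$-$\id M$ you dualise an $\X$-coresolution of length $n$ and use (a). For the converse you dualise an injective coresolution $0\to M\to I^0\to\cdots\to I^{n-1}\to K\to 0$, use that $\Y$ is resolving and closed under direct summands to get $K^+\in\Y$, and conclude $K\in\X$ by (a).

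The flaw is in how you get $(I^i)^+\in\Y$. You assert that each $(I^i)^+$ is flat. However, the character module of an injective left module is flat only under a coherence hypothesis: it holds when $R$ is left coherent, but not over an arbitrary ring. To see this, suppose $E^+$ were flat for every injective $E\in\Mod R$, and take the injective module $E=\prod_I (R_R)^+$.
\begin{enumerate}
\item[(i)] Since $\bigoplus_I (R_R)^+$ is pure in $E$, the module $(\bigoplus_I (R_R)^+)^+\cong\prod_I (R_R)^{++}$ is a direct summand of $E^+$, hence flat.
\item[(ii)] $\prod_I R_R$ is a pure submodule of $\prod_I (R_R)^{++}$, hence flat.
\end{enumerate}
As $I$ is arbitrary, Chase's theorem forces $R$ to be left coherent. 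The proposition is stated for arbitrary rings, and it is applied later (Corollary~\ref{cor-4.2}, Lemma~\ref{lem-4.10}) without coherence assumptions. So this step fails as written. Your auxiliary observation that flat right modules lie in $\Y$ is correct: their character modules are injective, hence in $\X$, and (b) applies. But it does not reach $(I^i)^+$.

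The repair is immediate and is exactly what the paper does. Since $\X$ is coresolving, $I^i\in\mathcal{I}(R)\subseteq\X$, and condition (a) gives $(I^i)^+\in\Y$ directly, with no flatness involved. Part (2) is different: there you dualise a projective resolution $P_\bullet$ of $N$, and $P_i^+$ is injective over any ring (and lies in $\X$ by (b) anyway, since $P_i\in\Y$). So your route is harmless in (2) but not in (1). With this correction your argument is the paper's proof.
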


\begin{proof}
(1) Let $M\in\Mod R$. We first prove $\Y$-$\pd M^+\leq\X$-$\id M$. If $\X$-$\id M=\infty$, 
then the assertion follows trivially. Now suppose that $\X$-$\id M=n<\infty$ and
$$0\to M\to X^0\to X^1\to\cdots\to X^n\to 0$$
is an exact sequence in $\Mod R$ with all $X^i$ in $\X$. It yields the following exact sequence
$$0\to {X^n}^+\to\cdots\to {X^1}^+\to {X^0}^+\to M^+\to 0$$
in $\Mod R^{op}$. By assumption, we have that all ${X^i}^+$ are in $\Y$ and 
$\Y$-$\pd M^+\leq n=\X$-$\id M$.

In the following, we prove $\X$-$\id M\leq\Y$-$\pd M^+$. If $\Y$-$\pd M^+=\infty$, 
then the assertion follows trivially. Now suppose that $\Y$-$\pd M^+=n<\infty$ and
$$0\to M\to I^0\to I^1\to\cdots\to I^{n-1}\to X^n\to 0$$
is an exact sequence in $\Mod R$ with all $I^i$ injective. It yields the following exact sequence
$$0\to {X^n}^+\to {I^{n-1}}^+\to\cdots\to {I^1}^+\to {I^0}^+\to M^+\to 0$$
in $\Mod R^{op}$. Since all $I^i$ are in $\X$,
we have that all ${I^i}^+$ are in $\Y$.
Since $\Y$ is resolving and closed under direct summands, we have ${X^n}^+\in\Y$
by \cite[Lemma 3.1(1)]{Hu3}. Then $X^n\in\X$ by assumption, and thus 
$\X$-$\id M\leq n=\Y$-$\pd M^+$.

(2) It is dual to (1).
\end{proof}

Recall from \cite{EJ,H} an exact sequence 
$$0\to K\to M\to L\to 0$$
in $\Mod R$ is called {\it pure exact} if 
$$0\to A\otimes_RK\to A\otimes_RM\to A\otimes_RL\to 0$$
is exact for any $A\in\Mod R^{op}$. In this case, $K$ and $L$ are called a {\it pure submodule} 
and a {\it pure quotient} of $M$, respectively. As a consequence of Proposition \ref{prop-4.1}, we obtain the following result.

\begin{cor} \label{cor-4.2}
It holds that
\begin{enumerate}
\item[{\rm (1)}] Under the assumptions in Proposition \ref{prop-4.1}, 
for any $n\ge 0$, both $\X$-$\id^{\leq n}(R)$ and $\Y$-$\pd^{\leq n}(R^{op})$
are closed under pure submodules and pure quotients.
\item[{\rm (2)}] For any $n\ge 0$, both $\mathcal{B}_C(R)$-$\id^{\leq n}$ and $\mathcal{A}_C(S)$-$\pd^{\leq n}$
are closed under pure submodules and pure quotients.
\end{enumerate}
\end{cor}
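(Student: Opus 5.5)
For part (1), I will use Proposition~\ref{prop-4.1}(1) to replace $\X$-injective dimension by $\Y$-projective dimension of the character module. Start with a pure exact sequence $0\to K\to M\to L\to 0$ in $\Mod R$ with $\X$-$\id M\leq n$. The key standard fact is that purity of this sequence is equivalent to the dual sequence
$$0\to L^+\to M^+\to K^+\to 0$$
being split exact in $\Mod R^{op}$. So both $K^+$ and $L^+$ are direct summands of $M^+$.

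By Proposition~\ref{prop-4.1}(1) we have $\Y$-$\pd M^+=\X$-$\id M\leq n$. Since $\Y$ is resolving and closed under direct summands, Lemma~\ref{lem-2.1}(1) shows that $\Y$-$\pd^{\leq n}$ is closed under direct summands. Hence $\Y$-$\pd K^+\leq n$ and $\Y$-$\pd L^+\leq n$. Applying Proposition~\ref{prop-4.1}(1) again gives $\X$-$\id K\leq n$ and $\X$-$\id L\leq n$.

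The case of $\Y$-$\pd^{\leq n}(R^{op})$ is symmetric. Given a pure exact sequence in $\Mod R^{op}$, its character dual is again split exact, because purity for right modules is likewise detected by splitting of the dual sequence. Proposition~\ref{prop-4.1}(2) converts $\Y$-projective dimension into $\X$-injective dimension of the character module. Lemma~\ref{lem-2.1}(2) then applies, since $\X$ is coresolving and closed under direct summands.

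For part (2), I will apply part (1) with $R$ replaced by $S$ on the Auslander side. Take $\X=\mathcal{B}_C(R)\subseteq\Mod R$ and $\Y=\mathcal{A}_C(R^{op})\subseteq\Mod R^{op}$ for the Bass class. Symmetrically, take $\Y=\mathcal{A}_C(S)$ paired with $\X=\mathcal{B}_C(S^{op})$, with the roles of left and right interchanged, for the Auslander class. The hypotheses to check are the following.
\begin{itemize}
\item Bass classes are coresolving and Auslander classes are resolving.
\item Both classes are closed under direct summands.
\item The character-module dualities $M\in\mathcal{B}_C(R)\iff M^+\in\mathcal{A}_C(R^{op})$ and $N\in\mathcal{A}_C(S)\iff N^+\in\mathcal{B}_C(S^{op})$ hold, together with their right-module counterparts.
\end{itemize}
These are known facts for semidualizing bimodules: the closure properties are in \cite{HW}, and the duality-pair statements are in \cite{Hu4}.

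The main obstacle is ensuring that these duality and closure facts hold over arbitrary rings for Wakamatsu tilting $C$, not just under Noetherian hypotheses. I would cite the duality-pair results of \cite{Hu4} (Foxby classes form duality pairs), together with \cite{HW} for closure under extensions, kernels of epimorphisms, cokernels of monomorphisms and summands. Once these are in place, part (2) follows directly from part (1).
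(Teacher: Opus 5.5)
Your proposal is correct and follows the paper's proof essentially verbatim. In (1) you use the splitting of the character-dual sequence of a pure exact sequence, then Proposition~\ref{prop-4.1}, and then Lemma~\ref{lem-2.1} for the closure of $\Y$-$\pd^{\leq n}$ (resp.\ $\X$-$\id^{\leq n}$) under direct summands. In (2) you use the closure properties from \cite{HW} and the character-module dualities from \cite{Hu4}, which hold over arbitrary rings. The only slip is cosmetic: on the Auslander side you are applying part (1) to the ring $S^{op}$, not $S$, with $\X=\mathcal{B}_C(S^{op})$ and $\Y=\mathcal{A}_C(S)$; this is exactly what your phrase ``roles of left and right interchanged'' amounts to.
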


\begin{proof}
(1) Let $$0\to K\to M \to L\to 0$$
be a pure exact sequence in $\Mod R$ with $\X$-$\id M\leq n$.
Then by \cite[Proposition 5.3.8]{EJ}, the induced exact sequence
$$0\to L^+\to M^+\to K^+\to 0$$
splits and both $K^+$ and $L^+$ are direct summands of $M^+$.
By Proposition \ref{prop-4.1}(1), we have $\Y$-$\pd M^+\leq n$.
Since $\Y$ is resolving and closed under direct summands,
we have that $\Y$-$\pd^{\leq n}$ is closed under direct summands by Lemma \ref{lem-2.1}(1). 
It follows that $\Y$-$\pd K^+\leq n$ and $\Y$-$\pd L^+\leq n$.
Thus $\X$-$\id K\leq n$ and $\X$-$\id L\leq n$
by Proposition \ref{prop-4.1}(1) again.

Similarly, we have that $\Y$-$\pd^{\leq n}(R^{op})$ is closed under pure submodules and pure quotients.

(2) By \cite[Theorem 6.2 and Proposition 4.2(a)]{HW}, we have that 
both $\mathcal{B}_C(R)$ and $\mathcal{B}_C(S^{op})$ are coresolving and closed under direct summands, and 
both $\mathcal{A}_C(S)$ and $\mathcal{A}_C(R^{op})$ are resolving and closed under direct summands.
On the other hand, by \cite[Proposition 3.2]{Hu4}, we have 
\begin{enumerate}
\item[$(a)$] 
a module $B\in\Mod R$ (respectively, $\Mod S^{op}$), it holds that $B\in\mathcal{B}_C(R)$ 
(respectively, $\mathcal{B}_C(S^{op})$) 
if and only if $B^+\in\mathcal{A}_{C}(R^{op})$ (respectively, $\mathcal{A}_C(S)$);
\item[$(b)$]
a module $A\in\Mod R^{op}$ (respectively, $\Mod S$), it holds that $A\in\mathcal{A}_{C}(R^{op})$ 
(respectively, $\mathcal{A}_C(S)$)
if and only if $A^+\in\mathcal{B}_C(R)$ (respectively, $\mathcal{B}_C(S^{op})$).
\end{enumerate}
Now the assertion follows from (1).
\end{proof} 

For a left $R$-module $M$, we use $E(M)$
to denote the injective envelope of $M$, and write
$$\FPid_RM:=\mathcal{FI}(R){\text -}\id M.$$
The following result shows that the assumption ``$R$ is a left coherent ring"
in \cite[Proposition 3.3 and Theorem 3.4]{Hu5} is superfluous. 

\begin{prop}\label{prop-4.3} 
\begin{enumerate}
It holds that
\item[$(1)$] If $M\in\mathcal{FI}(R)$, then $M^+$ is a direct summand of $E(M)^+$ and 
${M_*}^+$ is a direct summand of ${E(M)_*}^+$.
\item[$(2)$] $\mathcal{FI}(R)\subseteq\mathcal{B}_C(R)$ and $\mathcal{FI}_C(S)$-$\id^{<\infty}\subseteq\mathcal{A}_C(S)\subseteq{^{\bot}\mathcal{I}_C(S)}$.
\item[$(3)$] $\FPid_RC\otimes_SN\leq\mathcal{FI}_C(S)$-$\id N$ for any $N\in\Mod S$; the equality holds true if $N\in\mathcal{A}_C(S)$.
\end{enumerate}
\end{prop}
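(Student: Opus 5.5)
For (1) I will use the standard characterization of FP-injective modules by character modules: $M\in\mathcal{FI}(R)$ if and only if the inclusion $M\to E(M)$ is a pure monomorphism. This holds over an arbitrary ring, since an FP-injective module is pure in every module containing it. So $0\to M\to E(M)\to E(M)/M\to 0$ is pure exact. By \cite[Proposition 5.3.8]{EJ}, the dual sequence $0\to (E(M)/M)^+\to E(M)^+\to M^+\to 0$ splits, and $M^+$ is a direct summand of $E(M)^+$.

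For the second claim of (1) I will apply $(-)_*=\Hom_R(C,-)$ to the pure exact sequence. Because $_RC$ has a degreewise finite projective resolution, $C$ is finitely presented. Hence $\Hom_R(C,-)$ preserves pure exact sequences; indeed, for finitely presented $C$ one has $\Hom_R(C,E)^+\cong E^+\otimes_R C$ naturally. Concretely, I will use the natural isomorphism $(X_*)^+\cong X^+\otimes_RC$, valid for finitely presented $C$. Tensoring the split sequence of character modules with $C$ stays split, so ${M_*}^+\cong M^+\otimes_RC$ is a direct summand of $E(M)^+\otimes_RC\cong {E(M)_*}^+$. The only point needing care is exactness of $0\to M_*\to E(M)_*\to (E(M)/M)_*\to 0$. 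This follows because $\Ext^1_R(C,M)=0$, since $C$ is finitely presented and $M$ is FP-injective. In fact $\Ext^{\geq1}_R(C,M)=0$, because $C$ has a resolution by finitely generated projectives and its syzygies are again finitely presented.

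For (2), take $M\in\mathcal{FI}(R)$. Injective modules lie in $\mathcal{B}_C(R)$ \cite{HW}, so $E(M)\in\mathcal{B}_C(R)$ and $E(M)^+\in\mathcal{A}_C(R^{op})$ by \cite[Proposition 3.2]{Hu4}. By (1), $M^+$ is a direct summand of $E(M)^+$. Since $\mathcal{A}_C(R^{op})$ is closed under direct summands, $M^+\in\mathcal{A}_C(R^{op})$, and again by \cite[Proposition 3.2]{Hu4}, $M\in\mathcal{B}_C(R)$. For the second inclusion, every $M_*$ with $M\in\mathcal{FI}(R)\subseteq\mathcal{B}_C(R)$ lies in $\mathcal{A}_C(S)$ by Foxby equivalence \cite{HW}. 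So $\mathcal{FI}_C(S)\subseteq\mathcal{A}_C(S)$, and since $\mathcal{A}_C(S)$ is closed under cokernels of monomorphisms (as a thick-type class, \cite[Theorem 6.2]{HW}), any finite $\mathcal{FI}_C(S)$-coresolution gives membership in $\mathcal{A}_C(S)$. Finally, $\mathcal{A}_C(S)\subseteq{^{\bot}\mathcal{I}_C(S)}$ follows from the adjunction $\Ext^i_S(N,I_*)\cong\Ext^i_R(C\otimes_SN,I)=0$ for $N\in\mathcal{A}_C(S)$, using (A1).

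For (3), take an $\mathcal{FI}_C(S)$-coresolution $0\to N\to M^0_*\to\cdots\to M^n_*\to0$. By (2), $N$ and all cokernels lie in $\mathcal{A}_C(S)$, so $\Tor_1$ vanishes and $C\otimes_S-$ keeps the sequence exact. It yields $0\to C\otimes_SN\to M^0\to\cdots\to M^n\to0$ using $C\otimes_S M_*\cong M$ (B3), which gives the inequality. For the equality when $N\in\mathcal{A}_C(S)$, apply $(-)_*$ to an FP-injective coresolution of $C\otimes_SN$. Exactness is preserved because the cosyzygies lie in $\mathcal{B}_C(R)$: FP-injectives do by (2), and $\mathcal{B}_C(R)$ is closed under cokernels of monomorphisms, with $C\otimes_SN\in\mathcal{B}_C(R)$. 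Then $(C\otimes_SN)_*\cong N$ recovers an $\mathcal{FI}_C(S)$-coresolution of $N$. I expect the main obstacle to be in (1): justifying purity of $M\subseteq E(M)$ and the isomorphism $(X_*)^+\cong X^+\otimes_RC$ without any coherence hypothesis.
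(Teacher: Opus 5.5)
Your proposal is correct, and for (1) and (2) it follows the paper's argument. The ingredients are the same: purity of $M\subseteq E(M)$; splitting of the character sequence via \cite[Proposition 5.3.8]{EJ}; the natural isomorphism ${X_*}^+\cong X^+\otimes_RC$ for finitely presented $C$; and then $E(M)\in\mathcal{B}_C(R)$ together with the character-module criterion \cite[Proposition 3.2]{Hu4} and closure of $\mathcal{A}_C(R^{op})$ under direct summands. The paper packages this last step as Corollary \ref{cor-4.2}(2) with $n=0$, but it is the same argument.

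The paper only cites sources at two points. For $\mathcal{A}_C(S)\subseteq{^{\bot}\mathcal{I}_C(S)}$ it cites \cite[Corollary 3.5(2)]{TH3}. For (3) it defers to the proof of \cite[Theorem 3.4]{Hu5}, noting that coherence was used there only to obtain $\mathcal{FI}(R)\subseteq\mathcal{B}_C(R)$. You give self-contained arguments at both points, and they are sound. The adjunction argument is fine once (A1) is used. In (3) you tensor a finite $\mathcal{FI}_C(S)$-coresolution with $C$ and use (B3), and in the other direction you apply $(-)_*$ to an FP-injective coresolution of $C\otimes_SN$ inside $\mathcal{B}_C(R)$ and use (A3). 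This makes explicit that (2) is the only input (3) needs.

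Two small corrections.
\begin{enumerate}
\item To get $\mathcal{FI}_C(S)$-$\id^{<\infty}\subseteq\mathcal{A}_C(S)$, you must peel a finite coresolution $0\to N\to X^0\to\cdots\to X^n\to 0$ from the right. The property you need is therefore closure of $\mathcal{A}_C(S)$ under kernels of epimorphisms, not under cokernels of monomorphisms. The two-out-of-three property in \cite[Theorem 6.2]{HW} supplies it. The same closure is what puts all the cosyzygies in $\mathcal{A}_C(S)$ in your proof of (3).
\item In (1), exactness of $0\to M_*\to E(M)_*\to (E(M)/M)_*\to 0$ is not needed. Naturality of ${X_*}^+\cong X^+\otimes_RC$, applied to the inclusion $M\to E(M)$, already carries the split epimorphism $E(M)^+\otimes_RC\to M^+\otimes_RC$ to ${E(M)_*}^+\to{M_*}^+$.
\end{enumerate}
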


\begin{proof}
Let $M\in\mathcal{FI}(R)$. Then $M$ is a pure submodule of $E(M)$.

(1) By using an argument similar to that in the proof of Corollary \ref{cor-4.2}(1), 
we get that $M^+$ is a direct summand of $E(M)^+$, 
and hence $M^+\otimes_RC$ is a direct summand of $E(M)^+\otimes_RC$. Since
${M_*}^+\cong M^+\otimes_RC$ and ${E(M)_*}^+\cong E(M)^+\otimes_RC$ by 
\cite[Lemma 1.16(c)]{GT}, it follows that ${M_*}^+$ is a direct summand of ${E(M)_*}^+$.

(2)  Since $E(M)\in\mathcal{B}_C(R)$ by \cite[Lemma 4.1]{HW}, we have $M\in\mathcal{B}_C(R)$
by Corollary \ref{cor-4.2}(2), and thus $\mathcal{FI}(R)\subseteq\mathcal{B}_C(R)$. Then by \cite[Proposition 4.1]{HW},
we have $\mathcal{FI}_C(S)\subseteq\mathcal{A}_C(S)$. It follows from \cite[Theorem 6.2]{HW} 
and \cite[Corollary 3.5(2)]{TH3} that 
$\mathcal{FI}_C(S)$-$\id^{<\infty}\subseteq\mathcal{A}_C(S)\subseteq{^{\bot}\mathcal{I}_C(S)}$.

(3) When $R$ is a left coherent ring, the assertion (3) was proved in \cite[Theorem 3.4]{Hu5} and its proof
depends on that the containment $\mathcal{FI}(R)\subseteq\mathcal{B}_C(R)$ holds true. In view that 
$\mathcal{FI}(R)\subseteq\mathcal{B}_C(R)$ over arbitrary rings by (2),  
the argument in the proof of \cite[Theorem 3.4]{Hu5} is valid in our setting.  
\end{proof}

The following observation is useful.

\begin{prop}\label{prop-4.4} 
Let $\mathscr{H}$ be a subclass of $\widetilde{\res\mathcal{P}_C(R)}$, 
and let $\mathscr{T}$ be a subclass of $\widetilde{\cores\mathcal{I}_C(S)}$. It holds that
\begin{enumerate}
\item[$(1)$]
$\mathcal{P}(R)\subseteq r\mathcal{G}(\mathcal{P}_C(R),\mathscr{H})$ and $\mathcal{I}(S)\subseteq l\mathcal{G}(\mathcal{I}_C(S),\mathscr{T})$.
\item[$(2)$]
If $\mathcal{P}_C(R)\subseteq\mathscr{H}$, then $r\mathcal{G}(\mathcal{P}_C(R),\mathscr{H})$ is resolving. 
If $\mathcal{I}_C(S)\subseteq\mathscr{T}$, then $l\mathcal{G}(\mathcal{I}_C(S),\mathscr{T})$ is coresolving. 
\end{enumerate}
\end{prop}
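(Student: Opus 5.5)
We prove $(1)$ for $\mathcal{P}(R)$ and $(2)$ for $r\mathcal{G}(\mathcal{P}_C(R),\mathscr{H})$; the statements for $\mathcal{I}(S)$ and $l\mathcal{G}(\mathcal{I}_C(S),\mathscr{T})$ are dual.

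\emph{Step 1: ${_RR}$ and every projective lie in ${^\perp\mathscr{H}}$.} Let $P\in\mathcal{P}(R)$. Clearly $\Ext_R^{\geq1}(P,H)=0$ for every $H$, so $P\in{^\perp\mathscr{H}}$.

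\emph{Step 2: a $\Hom_R(-,\mathscr{H})$-exact $\mathcal{P}_C(R)$-coresolution of ${_RR}$.} By Definition~\ref{def-2.6}(3) there is a $\Hom_R(-,\add_RC)$-exact exact sequence
$$0\to{_RR}\to C^0\to C^1\to\cdots$$
with all $C^i$ in $\add_RC\subseteq\mathcal{P}_C(R)$. Since $C\otimes_SS\cong C$ and $\mathcal{P}_C(R)$ is closed under summands of finite sums, each $C^i$ lies in $\mathcal{P}_C(R)$. This sequence is $\Hom_R(-,C\otimes_S P)$-exact for every $P\in\mathcal{P}(S)$. Indeed, $\Hom_R(-,\add_RC)$-exactness passes to $\Hom_R(-,\Add_RC)$-exactness, because all terms have degreewise finite presentations. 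For ${_RR}$ this is clear. For each $C^i$, which is a summand of $C^m$, it follows from condition (1) of Definition~\ref{def-2.6}, which makes $\Hom_R(C^i,-)$ commute with direct sums.

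\emph{Step 3: pass from $\Hom_R(-,\mathcal{P}_C(R))$-exactness to $\Hom_R(-,\mathscr{H})$-exactness.} Call the sequence of Step~2 (2.1); it is a $\Hom_R(-,\mathcal{P}_C(R))$-exact $\mathcal{P}_C(R)$-coresolution of $R$. Each $H\in\mathscr{H}$ has a $\Hom_R(\mathcal{P}_C(R),-)$-exact resolution (2.2) by $\mathcal{P}_C(R)$. By Lemma~\ref{lem-2.3} with $\X=\mathcal{P}_C(R)$, $M=R$ and $N=H$, the sequence (2.1) is $\Hom_R(-,H)$-exact if and only if (2.2) is $\Hom_R(R,-)$-exact. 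The latter holds because (2.2) is exact and $R$ is projective. Hence ${_RR}\in\widetilde{\cores_{\mathscr{H}}\mathcal{P}_C(R)}$.

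\emph{Step 4: from $R$ to arbitrary projectives.} Take direct sums of the sequence in Step~2: $\mathcal{P}_C(R)$ is closed under direct sums, and $\Hom_R(\oplus,H)=\prod\Hom_R(-,H)$ preserves exactness. Direct summands are handled similarly: $P\oplus Q=R^{(I)}$, and we add the complementary sequence for $Q$ shifted appropriately. Concretely, Lemma~\ref{lem-2.3} applied directly to any projective $P$ in place of $R$ works once $P$ has some $\Hom_R(-,\mathcal{P}_C(R))$-exact coresolution. For that it is easier to use the Eilenberg trick: $P\oplus R^{(I)}\cong R^{(I)}$. This gives $P\in r\mathcal{G}(\mathcal{P}_C(R),\mathscr{H})$ provided this class is closed under direct summands.

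\emph{Step 5: part $(2)$.} If $\mathcal{P}_C(R)\subseteq\mathscr{H}$, the hypothesis of \cite[Theorem 3.4]{GW} needs $\mathcal{P}(R)\subseteq r\mathcal{G}(\mathcal{P}_C(R),\mathscr{H})$, which is part $(1)$. That theorem then gives resolving, as in the proof of Theorem~\ref{thm-3.11}. Closure under summands follows from \cite[Theorem 3.6]{GW}, which also completes the summand step in part $(1)$ if it is needed.

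The main obstacle is Step~2, namely upgrading the exactness of the Wakamatsu coresolution from $\add_RC$ to all of $\mathcal{P}_C(R)$ via the finiteness of $C^i$.
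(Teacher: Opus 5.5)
Your transfer mechanism is exactly the paper's: Lemma~\ref{lem-2.3} with $\mathscr{X}=\mathcal{P}_C(R)$ and $N=H$, using that an exact resolution is automatically $\Hom_R(P,-)$-exact. Steps~1--3 and the passage to free modules by direct sums are sound. In Step~2 you should cite that summands of $C^m$ lie in $\mathcal{P}_C(R)$; this is the nontrivial half of $\mathcal{P}_C(R)=\Add_RC$ \cite[Proposition 2.4(a)]{LHX}.

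The real obstacle is Step~4, not Step~2, and there the argument has a gap. For a non-free projective $P$ you only conclude $P\in r\mathcal{G}(\mathcal{P}_C(R),\mathscr{H})$ ``provided this class is closed under direct summands,'' and you take that closure from \cite[Theorem 3.6]{GW}. In part~(1), however, $\mathscr{H}$ is an arbitrary subclass of $\widetilde{\res\mathcal{P}_C(R)}$. It need not contain $\mathcal{P}_C(R)$, and $\mathcal{P}_C(R)\subseteq{^\perp\mathscr{H}}$ is not assumed. So you are outside every setting in which the paper invokes Gao--Wu's closure results. In Theorem~\ref{thm-3.11}, for instance, $\mathscr{C}\subseteq\mathscr{D}\cap{^\perp\mathscr{D}}$ and $\mathcal{P}(\mathscr{A})\subseteq r\mathcal{G}(\mathscr{C},\mathscr{D})$ are standing hypotheses. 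Using those results to establish that very inclusion in part~(2) therefore risks circularity. Your alternative, ``adding the complementary sequence for $Q$ shifted appropriately,'' is not an argument. The isomorphism $P\oplus R^{(I)}\cong R^{(I)}$ by itself produces no coresolution of $P$.

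The repair is the one your own ``Concretely\ldots'' sentence points to: do the summand step in the $\mathscr{H}$-independent class $r\mathcal{G}(\mathcal{P}_C(R))$. All you need is that every projective $P$ has a $\Hom_R(-,\mathcal{P}_C(R))$-exact $\mathcal{P}_C(R)$-coresolution. Then Lemma~\ref{lem-2.3} applies to $P$ directly, and no closure property of $r\mathcal{G}(\mathcal{P}_C(R),\mathscr{H})$ is ever used. The paper gets this in one line by citing that $r\mathcal{G}(\mathcal{P}_C(R))$ is resolving \cite[Remark 4.4(3)]{Hu3}.

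On your route it also works directly. Your Step~2 shows that free modules $F$ lie in $r\mathcal{G}(\mathcal{P}_C(R))$. The Eilenberg isomorphism $P\oplus F\cong F$ gives a split exact sequence $0\to P\to F\to F\to 0$. The class $r\mathcal{G}(\mathcal{P}_C(R))$ is closed under kernels of epimorphisms because $\mathcal{P}_C(R)$ is self-orthogonal (by a horseshoe argument, or \cite[Theorem 3.4]{GW} with $\mathscr{C}=\mathscr{D}=\mathcal{P}_C(R)$). Hence $P\in r\mathcal{G}(\mathcal{P}_C(R))$.

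Finally, the injective half is not a formal dual of your Step~2. The Wakamatsu data give a coresolution of ${_RR}$ by $\add_RC$, not a resolution of an injective left $S$-module by $\mathcal{I}_C(S)$. You need either a separate argument or the paper's citation that $l\mathcal{G}(\mathcal{I}_C(S))$ is coresolving. A separate argument could go as follows:
\begin{enumerate}
\item[(i)] Apply $(-)^+$ to the coresolution of $S_S$ by $\add C_S$, noting $(C_S)^+\cong\Hom_R(C,R^+)\in\mathcal{I}_C(S)$.
\item[(ii)] Get $\Hom_S(\mathcal{I}_C(S),-)$-exactness by tensor evaluation, since $C_S$ is finitely presented.
\item[(iii)] Pass to products and summands as above.
\end{enumerate}
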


\begin{proof}
(1) Let $P\in\mathcal{P}(R)$ and $I\in\mathcal{I}(S)$. Since $r\mathcal{G}(\mathcal{P}_C(R))$ is resolving 
and $l\mathcal{G}(\mathcal{I}_C(S))$ is coresolving (cf. \cite[Remark 4.4(3)]{Hu3}), 
there exists a $\Hom_R(-,\mathcal{P}_C(R))$-exact exact sequence
$$0\to P\to Q^0\to Q^1\to\cdots\to\cdots Q^i\to\cdots\eqno{(4.1)}$$
in $\Mod R$ with all $Q^i$ in $\mathcal{P}_C(R)$, and there exists a $\Hom_S(\mathcal{I}_C(S),-)$-exact exact sequence
$$\cdots\to E_i\to\cdots\to E_1\to E_0\to I\to 0\eqno{(4.2)}$$
in $\Mod S$ with all $E_i$ in $\mathcal{I}_C(S)$. 
On the other hand, since $\mathscr{H}\subseteq\widetilde{\res\mathcal{P}_C(R)}$ 
and $\mathscr{T}\subseteq\widetilde{\cores\mathcal{I}_C(S)}$ by assumption,
we have that for any $H\in\mathscr{H}$ and $T\in\mathscr{T}$, 
there exists a $\Hom_R(\mathcal{P}_C(R),-)$-exact exact sequence
$$\cdots\to Q_i\to\cdots\to Q_1\to Q_0\to H\to 0\eqno{(4.3)}$$
in $\Mod R$ with all $Q_i$ in $\mathcal{P}_C(R)$, and there exists a $\Hom_S(-,\mathcal{I}_C(S))$-exact exact sequence
$$0\to T\to E^0\to E^1\to\cdots\to\cdots E^i\to\cdots\eqno{(4.4)}$$
in $\Mod S$ with all $E^i$ in $\mathcal{I}_C(S)$.
It is trivial that (4.3) is $\Hom_R(P,-)$-exact and (4.4) is $\Hom_S(-,I)$-exact. 
Then (4.1) is $\Hom_R(-,H)$-exact and (4.2) is $\Hom_S(T,-)$-exact by Lemma \ref{lem-2.3}.
So $P\in\widetilde{\cores_{\mathscr{H}}\mathcal{P}_C(R)}$ and $I\in\widetilde{\res_{\mathscr{T}}\mathcal{I}_C(S)}$, and hence 
$$P\in{^{\bot}\mathscr{H}}\cap\widetilde{\cores_{\mathscr{H}}\mathcal{P}_C(R)}=r\mathcal{G}(\mathcal{P}_C(R),\mathscr{H})$$
and $$I\in{\mathscr{T}^{\bot}}\cap\widetilde{\res_{\mathscr{T}}\mathcal{I}_C(S)}=l\mathcal{G}(\mathcal{I}_C(S),\mathscr{T}).$$

(2) If $\mathcal{P}_C(R)\subseteq\mathscr{H}$ and $\mathcal{I}_C(S)\subseteq\mathscr{T}$, then 
$r\mathcal{G}(\mathcal{P}_C(R),\mathscr{H})$ is closed under extensions and kernels of epimorphisms
and $l\mathcal{G}(\mathcal{I}_C(S),\mathscr{T})$ is closed under extensions and cokernels of monomorphisms
by \cite[Theorem 3.4]{GW} and its dual result, respectively. Now both assertions follow from (1).
\end{proof}

We write
$$\mathcal{I}_C(R^{op})^+:=\{E^+\mid E\in\mathcal{I}_C(R^{op})\}.$$
For simplicity, we write
$$\mathcal{GF}_C(R):=r\mathcal{G}(\mathcal{F}_C(R),\mathcal{I}_C(R^{op})^+)\ \ \text{and}\ \ 
\mathcal{PGF}_C(R):=r\mathcal{G}(\mathcal{P}_C(R),\mathcal{I}_C(R^{op})^+),$$
$$\mathcal{SGF}_C(R):=r\mathcal{G}(\mathcal{P}_C(R),\mathcal{F}_C(R))\ \text{(the class of $C$-strong Gorenstein flat left $R$-modules)},$$
$$\mathcal{GFI}_C(S):=l\mathcal{G}(\mathcal{I}_C(S),\mathcal{FI}_C(S))\ \text{(the class of $C$-Gorenstein FP-injective left $S$-modules)}.$$

\begin{cor}\label{cor-4.5} 
It holds that 
\begin{enumerate}
\item[$(1)$]
$\mathcal{P}(R)\subseteq\mathcal{PGF}_C(R)\subseteq\mathcal{GF}_C(R)$.
\item[$(2)$]
Both $\mathcal{SGF}_C(R)$ and $r\mathcal{G}(\mathcal{P}_C(R),\mathcal{B}_C(R))$ are resolving.
\item[$(3)$]
Both $\mathcal{GFI}_C(S)$ and $l\mathcal{G}(\mathcal{I}_C(S),\mathcal{A}_C(S))$ are coresolving.
\end{enumerate}
\end{cor}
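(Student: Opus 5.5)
The whole corollary should follow from Proposition \ref{prop-4.4} once we check that each of the second arguments $\mathcal{I}_C(R^{op})^+$, $\mathcal{F}_C(R)$, $\mathcal{B}_C(R)$ lies in $\widetilde{\res\mathcal{P}_C(R)}$ (and $\mathcal{FI}_C(S)$, $\mathcal{A}_C(S)$ lie in $\widetilde{\cores\mathcal{I}_C(S)}$), and that it contains $\mathcal{P}_C(R)$ (resp. $\mathcal{I}_C(S)$) where resolving or coresolving is claimed.

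\textbf{Step 1: the Bass and Auslander classes.} I would use the known facts from \cite{HW}, and the standard description of $\mathcal{B}_C(R)$ and $\mathcal{A}_C(S)$ as one-sided Gorenstein subcategories announced in the paper (cf. \cite{TH3}, Section 4.2):
\[
\mathcal{B}_C(R)\subseteq\widetilde{\res\mathcal{P}_C(R)},\qquad \mathcal{A}_C(S)\subseteq\widetilde{\cores\mathcal{I}_C(S)}.
\]
Concretely, for $B\in\mathcal{B}_C(R)$, take a projective resolution of $B_*$ over $S$ and apply $C\otimes_S-$. By (B2) and (B3) this gives an exact $\mathcal{P}_C(R)$-resolution of $B$. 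It is $\Hom_R(\mathcal{P}_C(R),-)$-exact, because $\Hom_R(C\otimes_SP,-)\cong\Hom_S(P,(-)_*)$ and $(-)_*$ keeps the resolution exact by (B1) and the closure of $\mathcal{B}_C(R)$ under kernels of epimorphisms. The dual argument handles $\mathcal{A}_C(S)$.

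Since $\mathcal{P}_C(R)\subseteq\mathcal{B}_C(R)$ and $\mathcal{I}_C(S)\subseteq\mathcal{A}_C(S)$ \cite[Lemma 4.1]{HW}, Proposition \ref{prop-4.4}(2) gives that $r\mathcal{G}(\mathcal{P}_C(R),\mathcal{B}_C(R))$ is resolving and $l\mathcal{G}(\mathcal{I}_C(S),\mathcal{A}_C(S))$ is coresolving.

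\textbf{Step 2: the flat and FP-injective classes.} We have $\mathcal{F}_C(R)\subseteq\mathcal{B}_C(R)$, because $\mathcal{F}(S)\subseteq\mathcal{A}_C(S)$ and Foxby equivalence sends $\mathcal{A}_C(S)$ into $\mathcal{B}_C(R)$ via $C\otimes_S-$. We also have $\mathcal{FI}_C(S)\subseteq\mathcal{A}_C(S)$ by Proposition \ref{prop-4.3}(2). Hence Step 1 gives $\mathcal{F}_C(R)\subseteq\widetilde{\res\mathcal{P}_C(R)}$ and $\mathcal{FI}_C(S)\subseteq\widetilde{\cores\mathcal{I}_C(S)}$. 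Since $\mathcal{P}_C(R)\subseteq\mathcal{F}_C(R)$ and $\mathcal{I}_C(S)\subseteq\mathcal{FI}_C(S)$ (injectives are FP-injective), Proposition \ref{prop-4.4}(2) shows that $\mathcal{SGF}_C(R)$ is resolving and $\mathcal{GFI}_C(S)$ is coresolving. This completes (2) and (3).

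\textbf{Step 3: part (1).} Take $E\in\mathcal{I}_C(R^{op})$, so $E=I'_*$ with $I'\in\mathcal{I}(S^{op})$. Then
\[
E^+=\Hom_S(C,I')^+\cong C\otimes_S I'^+
\]
by \cite[Lemma 1.16]{GT}. Here $I'^+$ is a flat left $S$-module, so $E^+\in\mathcal{F}_C(R)$. Thus $\mathcal{I}_C(R^{op})^+\subseteq\mathcal{F}_C(R)\subseteq\widetilde{\res\mathcal{P}_C(R)}$ by Step 2, and Proposition \ref{prop-4.4}(1) gives $\mathcal{P}(R)\subseteq\mathcal{PGF}_C(R)$. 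Finally $\mathcal{PGF}_C(R)\subseteq\mathcal{GF}_C(R)$ follows from the monotonicity remark after Definition \ref{def-2.5}, since $\mathcal{P}_C(R)\subseteq\mathcal{F}_C(R)$ and the second argument is unchanged.

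\textbf{Main obstacle.} The delicate point is Step 1, namely the $\Hom(\mathcal{P}_C(R),-)$-exactness of the resolution, equivalently the membership $\mathcal{B}_C(R)\subseteq\widetilde{\res\mathcal{P}_C(R)}$. I would cite the known characterization, e.g. $\mathcal{B}_C(R)=l\mathcal{G}(\mathcal{P}_C(R))\cap\ldots$ from \cite{Hu3,TH3}, rather than reprove it.
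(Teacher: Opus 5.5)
Your proposal is correct and follows essentially the same route as the paper: you check that the relevant second arguments lie in $\mathcal{B}_C(R)\subseteq\widetilde{\res\mathcal{P}_C(R)}$ (respectively $\mathcal{A}_C(S)\subseteq\widetilde{\cores\mathcal{I}_C(S)}$) and contain $\mathcal{P}_C(R)$ (respectively $\mathcal{I}_C(S)$), and then apply Proposition~\ref{prop-4.4} together with the monotonicity remark. The one small deviation is for $\mathcal{I}_C(R^{op})^+$. The paper obtains $\mathcal{I}_C(R^{op})^+\subseteq\mathcal{B}_C(R)$ from $\mathcal{I}_C(R^{op})\subseteq\mathcal{A}_C(R^{op})$ and the character-module duality of \cite[Proposition 3.2(1)]{Hu4}, whereas your isomorphism $\Hom_S(C,I')^+\cong C\otimes_S I'^+$ (valid because $C_S$ is finitely presented) gives the sharper inclusion into $\mathcal{F}_C(R)$.
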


\begin{proof}
We have the following facts:
\begin{enumerate}
\item[{\rm (i)}] Since $\mathcal{I}_C(R^{op})\subseteq\mathcal{A}_C(R^{op})$ by \cite[Corollary 6.1]{HW}, 
it follows from \cite[Proposition 3.2(1)]{Hu4} and \cite[Theorem 6.1]{HW} that 
$\mathcal{I}_C(R^{op})^+\subseteq\mathcal{B}_C(R)\subseteq\widetilde{\res\mathcal{P}_C(R)}$.
\item[{\rm (ii)}] $\mathcal{P}_C(R)\subseteq\mathcal{F}_C(R)\subseteq\mathcal{B}_C(R)\subseteq\widetilde{\res\mathcal{P}_C(R)}$
by \cite[Corollary 6.1 and Theorem 6.1]{HW}.
\item[{\rm (iii)}] $\mathcal{I}_C(S)\subseteq\mathcal{FI}_C(S)\subseteq\mathcal{A}_C(S)\subseteq\widetilde{\cores\mathcal{I}_C(S)}$ by 
Proposition \ref{prop-4.3}(2) and \cite[Theorem 2]{HW}.
\end{enumerate}
Now all assertions follow from Proposition \ref{prop-4.4}.
\end{proof}

Note that $\mathcal{GF}_C(R):=r\mathcal{G}(\mathcal{F}_C(R),\mathcal{I}_C(R^{op})^+)$ and 
$\mathcal{PGF}_C(R):=r\mathcal{G}(\mathcal{P}_C(R),\mathcal{I}_C(R^{op})^+)$
are exactly the classes of {\it $C$-Gorenstein flat} 
and {\it $C$-projectively coresolved Gorenstein flat} left $R$-modules, respectively (\cite{HS}). 
When $_RC_S={_RR_R}$, they are exactly the classes of Gorenstein flat 
and projectively coresolved Gorenstein flat left $R$-modules, respectively (\cite{EJ,SS}).

\subsection{Usual $C$-Gorenstein subcategories}

Following the usual customary notation, we write
$$\GCpd_RM:=r{\mathcal{G}}(\mathcal{P}_C(R)){\text -}\pd M\ \  
\text{and}\ \ \GCid_SN:=l{\mathcal{G}}(\mathcal{I}_C(S)){\text -}\id N,$$
$$\SGFCpd_RM:=\mathcal{SGF}_C(R){\text -}\pd M\ \  
\text{and}\ \ \GFPCid_SN:=\mathcal{GFI}_C(S){\text -}\id N.$$

A Wakamatsu tilting module $_RC$ with $S=\End(_RC)$ (equivalently, a semidualizing bimodule ${_RC_S}$)
is called {\it faithful} if the following conditions are satisfied: (i) if $M\in \Mod R$ satisfies $M_*=0$, then $M=0$;
and (ii) if $N\in \Mod S^{op}$ satisfies $N_*=0$, then $N=0$ (\cite{HW}).
In the following result, the implications $(1.1)\Longleftrightarrow (1.2)\Longleftrightarrow (2.1)\Longleftrightarrow (2.2)$ 
have been obtained in \cite[Theorem 5.4(1)]{HS}.

\begin{thm} \label{thm-4.6}
It holds that
\begin{enumerate}
\item[{\rm (i)}]
For any $n\ge 0$, the following statements are equivalent.
\begin{enumerate}
\item[{\rm (1.1)}] $\Mod R$ is right $n$-$\mathcal{P}_C(R)$-Gorenstein 
$($that is, $\mathcal{P}_C(R)$-$\pd \mathcal{I}(R)\leq n$ and $\id\mathcal{P}_C(R)\leq n)$.
\item[{\rm (1.2)}] $\GCpd_RM\leq n$ for any $M\in\Mod R$.
\item[{\rm (1.3)}] $\GCpd\mathcal{I}(R)\leq n$ and $\id\mathcal{P}_C(R)\leq n$.
\item[{\rm (2.1)}] $\Mod S$ is left $n$-$\mathcal{I}_C(S)$-Gorenstein 
$($that is, $\mathcal{I}_C(S)$-$\id\mathcal{P}(S)\leq n$ and $\pd\mathcal{I}_C(S)\leq n)$.
\item[{\rm (2.2)}] $\GCid_SN\leq n$ for any $N\in\Mod S$.
\item[{\rm (2.3)}] $\GCid \mathcal{P}(S)\leq n$ and $\pd \mathcal{I}_C(S)\leq n$.
\item[{\rm (3.1)}] $\Mod R$ is right $n$-$(\mathcal{P}_C(R),\mathcal{F}_C(R))$-Gorenstein 
$($that is, $\mathcal{P}_C(R)$-$\pd \mathcal{I}(R)\leq n$ and $\id\mathcal{F}_C(R)\leq n)$.
\item[{\rm (3.2)}] $\SGFCpd_RM\leq n$ for any $M\in\Mod R$.
\item[{\rm (3.3)}] $\SGFCpd\mathcal{I}(R)\leq n$ and $\id\mathcal{F}_C(R)\leq n$.
\item[{\rm (4.1)}] $\Mod S$ is left $n$-$(\mathcal{I}_C(S),\mathcal{FI}_C(S))$-Gorenstein 
$($that is, $\mathcal{I}_C(S)$-$\id\mathcal{P}(S)\leq n$ and $\pd\mathcal{FI}_C(S)$ $\leq n)$.
\item[{\rm (4.2)}] $\GFPCid_SN\leq n$ for any $N\in\Mod S$.
\item[{\rm (4.3)}] $\GFPCid\mathcal{P}(S)\leq n$ and $\pd \mathcal{FI}_C(S)\leq n$.
\end{enumerate}
\item[{\rm (ii)}]
If the Wakamatsu tilting module $C$ is faithful, then all conditions in {\rm (i)} and the below conditions are equivalent.
\begin{enumerate}
\item[{\rm (1.4)}] $\id^{\leq n}(R)=\mathcal{P}_C(R)$-$\pd^{\leq n}$.
\item[{\rm (2.4)}] $\pd^{\leq n}(S)=\mathcal{I}_C(S)$-$\id^{\leq n}$.
\item[{\rm (3.4)}] $\mathcal{F}_C(R)$-$\pd^{\leq n}=\id^{\leq n}(R)=\mathcal{P}_C(R)$-$\pd^{\leq n}$.
\item[{\rm (4.4)}] $\mathcal{FI}_C(S)$-$\id^{\leq n}=\pd^{\leq n}(S)=\mathcal{I}_C(S)$-$\id^{\leq n}$.
\end{enumerate}
\item[{\rm (iii)}]
If $R$ is a left Noetherian ring and $S$ is a right coherent ring, 
then all conditions in {\rm (i)} and the below condition are equivalent.
\begin{enumerate}
\item[{\rm (5)}] $\FPid_RC\leq n$ and $\FPid_{S^{op}}C\leq n$.
\end{enumerate}
\item[{\rm (iv)}]
If $R$ is a left Noetherian ring and $S$ is a right Noetherian ring, 
then all conditions in {\rm (i)} and the below condition are equivalent.
\begin{enumerate}
\item[{\rm (6)}] $\id_RC\leq n$ and $\id_{S^{op}}C\leq n$.
\end{enumerate}
\end{enumerate}
\end{thm}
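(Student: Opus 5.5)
The plan is to verify the hypotheses of Theorem \ref{thm-3.11} and Theorem \ref{thm-3.14} for the relevant pairs, and then to link the $R$-side and the $S$-side by numerical identities.

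\emph{Step 1: the $R$-side triples.} Take $\C=\mathcal{P}_C(R)$ and $\D=\mathcal{P}_C(R)$ (respectively $\D=\mathcal{F}_C(R)$) in Theorem \ref{thm-3.11}. Then $\mathcal{P}_C(R)$ is closed under direct summands, since $\mathcal{P}_C(R)=\Add_RC$ for Wakamatsu tilting $C$. We also need $\mathcal{P}_C(R)\subseteq\D\cap{^\perp\D}$. This holds because $\mathcal{P}_C(R)\subseteq\mathcal{F}_C(R)$, and because $\Ext^{\geq1}_R(C\otimes_SP,C\otimes_SF)=0$, which follows from $\mathcal{F}_C(R)\subseteq\mathcal{B}_C(R)$ and \cite{HW}. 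Finally, $\mathcal{P}(R)\subseteq r\mathcal{G}(\C,\D)$ is Proposition \ref{prop-4.4}(1) combined with fact (ii) in the proof of Corollary \ref{cor-4.5}. So Theorem \ref{thm-3.11} gives (1.1)$\Leftrightarrow$(1.2)$\Leftrightarrow$(1.3) and (3.1)$\Leftrightarrow$(3.2)$\Leftrightarrow$(3.3).

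\emph{Step 2: the $S$-side triples.} Dually, take $\C=\mathcal{I}_C(S)$ and $\D=\mathcal{I}_C(S)$ or $\mathcal{FI}_C(S)$ in Theorem \ref{thm-3.14}. We use $\mathcal{I}_C(S)=\Prod C_*$-type closure under summands, self-orthogonality via $\mathcal{FI}_C(S)\subseteq\mathcal{A}_C(S)\subseteq{^\perp\mathcal{I}_C(S)}$ (Proposition \ref{prop-4.3}(2)), and $\mathcal{I}(S)\subseteq l\mathcal{G}$ from Proposition \ref{prop-4.4}. This gives the equivalences within groups (2.x) and (4.x).

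\emph{Step 3: linking the four groups.} This is the main obstacle.
\begin{itemize}
\item Lemma \ref{lem-2.7} gives $\id\mathcal{P}_C(R)=\id\mathcal{F}_C(R)=\mathcal{I}_C(S)\text{-}\id\mathcal{P}(S)$. This identifies the second halves of (1.1) and (3.1) with the first half of (2.1) and (4.1).
\item It remains to show $\mathcal{P}_C(R)\text{-}\pd\mathcal{I}(R)=\pd\mathcal{I}_C(S)=\pd\mathcal{FI}_C(S)$.
\item For the first equality, use Foxby equivalence. Since $\mathcal{I}(R)\subseteq\mathcal{B}_C(R)$, applying $(-)_*$ to a $\mathcal{P}_C(R)$-resolution of $I$, which stays exact on $\mathcal{B}_C(R)$, yields a projective resolution of $I_*$ of the same length. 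Conversely, $C\otimes_S-$ carries a projective resolution of $I_*\in\mathcal{A}_C(S)$ to a $\mathcal{P}_C(R)$-resolution of $I$, again of the same length.
\item For $\mathcal{FI}_C(S)$, the same argument gives $\pd M_*=\mathcal{P}_C(R)\text{-}\pd M$ for $M\in\mathcal{FI}(R)\subseteq\mathcal{B}_C(R)$ (Proposition \ref{prop-4.3}(2)).
\item By Proposition \ref{prop-4.3}(1) and a purity argument, $\mathcal{P}_C(R)\text{-}\pd$ of an FP-injective $M$ is bounded by that of $E(M)$. One route is Proposition \ref{prop-3.4}: with $\U=\mathcal{P}_C(R)$, $\mathcal{P}_C(R)\text{-}\pd\mathcal{I}(R)\leq n$ gives $\mathcal{I}(R)\text{-}\id^{<\infty}\subseteq\mathcal{P}_C(R)\text{-}\pd^{\leq n}$. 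This does not cover FP-injectives directly. Instead, note that $\mathcal{P}_C(R)\text{-}\pd^{\le n}$ is closed under pure submodules. This should follow from Corollary \ref{cor-4.2}-type arguments applied to $\mathcal{B}_C(R)$, or from the fact that $\pd$ over $S$ of $M_*$ relates to $\mathcal{FI}$ via $(-)^+$ duality.
\item This purity step is where care is needed. If it fails, I would fall back on (3.1)$\Leftrightarrow$(4.1) directly, using $\id\mathcal{F}_C(R)=\pd\mathcal{FI}_C(S)$-type dualities obtained through $(-)^+$.
\end{itemize}

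\emph{Steps 4--6: parts (ii), (iii), (iv).}
\begin{itemize}
\item For (ii), apply part (4) of Theorems \ref{thm-3.11} and \ref{thm-3.14}. The needed closure of $\C$ under kernels of epimorphisms, respectively cokernels of monomorphisms, holds for $\mathcal{P}_C(R)$ and $\mathcal{I}_C(S)$ when $C$ is faithful, by \cite{HW}.
\item For (iii), over left Noetherian $R$ we have $\mathcal{FI}(R)=\mathcal{I}(R)$. Proposition \ref{prop-4.3}(3) with $N=S\in\mathcal{A}_C(S)$ gives $\FPid_RC=\mathcal{FI}_C(S)\text{-}\id S$, which matches $\mathcal{I}_C(S)\text{-}\id\mathcal{P}(S)$ after summing over free modules. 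Symmetrically, right coherence of $S$ identifies $\FPid_{S^{op}}C$ with $\pd\mathcal{I}_C(S)$ via character-module duality (Proposition \ref{prop-4.1}).
\item For (iv), over Noetherian rings FP-injective equals injective, so (iv) follows from (iii).
\end{itemize}
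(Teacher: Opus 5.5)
\paragraph{Main gap: the bound on $\pd\mathcal{FI}_C(S)$.} Your Steps 1--2 and part (ii) agree with the paper. Your link (1.1)$\Leftrightarrow$(2.1) is a self-contained substitute for the paper's citation of \cite[Theorem 5.4(1)]{HS}: Foxby equivalence gives $\mathcal{P}_C(R)\text{-}\pd I=\pd_SI_*$ for $I\in\mathcal{I}(R)\subseteq\mathcal{B}_C(R)$, and Lemma \ref{lem-2.7} handles the injective-dimension halves.

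The genuine gap is the step you flag yourself: the bound $\pd\mathcal{FI}_C(S)\leq n$ needed for (2.1)$\Rightarrow$(4.1).
\begin{itemize}
\item Closure of $\mathcal{P}_C(R)\text{-}\pd^{\leq n}$ under pure submodules is false in general. Take $C=R$ and $n=0$, and let $F$ be a flat module with $\pd F\geq 2$ (e.g.\ any module of projective dimension $\geq 2$ over a von Neumann regular ring of global dimension $\geq 2$). In $0\to K\to P\to F\to 0$ with $P$ projective, $K$ is a pure submodule of $P$ that is not projective.
\item Purity and character modules only see \emph{flat} dimension. What Proposition \ref{prop-4.3}(1) actually yields for $M\in\mathcal{FI}(R)$ is
\[\mathcal{F}_C(R)\text{-}\pd M=\fd_SM_*=\id_{S^{op}}{M_*}^+\leq\id_{S^{op}}{E(M)_*}^+=\fd_SE(M)_*\leq\pd_SE(M)_*\leq n.\]
\item Your fallback identity $\id\mathcal{F}_C(R)=\pd\mathcal{FI}_C(S)$ cannot come from $(-)^+$ either. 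For $C=R$ and $R$ two-sided Noetherian we have $\id\mathcal{F}(R)=\id_RR$ and $\mathcal{FI}(R)=\mathcal{I}(R)$. The identity would then say $\id_RR=\pd\mathcal{I}(R)\geq\fd\mathcal{I}(R)=\id_{R^{op}}R$. By symmetry this forces $\id_RR=\id_{R^{op}}R$ for every such ring, which includes the open Gorenstein symmetry conjecture for artin algebras.
\end{itemize}

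The missing idea is to upgrade the flat bound by a detour through injective dimension. This uses the half of the hypothesis your purity argument never touches.
\begin{itemize}
\item By (3.1), which is equivalent to (1.1) via Lemma \ref{lem-2.7}, we have $\id\mathcal{F}_C(R)\leq n$. With Proposition \ref{prop-3.5}, this turns $\mathcal{F}_C(R)\text{-}\pd M<\infty$ into $\id_RM\leq n$.
\item Since $M\in\mathcal{B}_C(R)$ and $M_*\in\mathcal{A}_C(S)$, we get $\mathcal{I}_C(S)\text{-}\id M_*=\id_R(C\otimes_SM_*)=\id_RM\leq n$.
\item Then $\pd\mathcal{I}_C(S)\leq n$ together with Proposition \ref{prop-3.4} (for $\U=\mathcal{P}(S)$) gives $\pd_SM_*\leq n$.
\end{itemize}

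\paragraph{The same issue in part (iii).} Over right coherent $S$, character-module duality identifies $\FPid_{S^{op}}C$ with $\fd\mathcal{I}_C(S)=\mathcal{F}_C(R)\text{-}\pd\mathcal{I}(R)$, not with $\pd\mathcal{I}_C(S)$. This suffices for (1.1)$\Rightarrow$(5). For (5)$\Rightarrow$(1.1), however, you still have to pass from $\mathcal{F}_C(R)\text{-}\pd\mathcal{I}(R)\leq n$ to $\mathcal{P}_C(R)\text{-}\pd\mathcal{I}(R)\leq n$.

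The detour above is circular here: the modules involved are already injective, and concluding would require $\pd\mathcal{I}_C(S)\leq n$, which is the very thing to be shown. The paper uses a separate result, \cite[Lemma 5.19(1)]{HS}, for exactly this flat-to-projective step.

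The other half of (5)$\Rightarrow$(1.1) is fine. Over left Noetherian $R$ we have $\FPid_RC=\id_RC$, and since $\mathcal{P}_C(R)=\Add_RC$, Bass's theorem on direct sums gives $\id\mathcal{P}_C(R)\leq n$. Your ``summing over free modules'' amounts to this.
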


\begin{proof}
Note that the following assertions hold true:
\begin{enumerate}
\item[{\rm (a)}] Both $\mathcal{P}_C(R)$ and $\mathcal{I}_C(S)$ are self-orthogonal by \cite[Lemma 2.7]{HS}.
\item[{\rm (b)}] $r{\mathcal{G}}(\mathcal{P}_C(R))$ is resolving and $l{\mathcal{G}}(\mathcal{I}_C(S))$ 
is coresolving (cf. \cite[Remark 4.4(3)]{Hu3}).
\item[{\rm (c)}] If the Wakamatsu tilting module $_RC$ with $S=\End(_RC)$ is faithful, then $\mathcal{P}_C(R)$ 
is closed under kernels of epimorphisms and $\mathcal{I}_C(S)$ is closed under cokernels of monomorphisms 
by \cite[Corollary 6.4]{HW}.
\end{enumerate}
Putting $\C=\D=\mathcal{P}_C(R)$ in Theorem \ref{thm-3.11}, 
we get $(1.1)\Longleftrightarrow (1.2)\Longleftrightarrow (1.3)\ (\Longleftrightarrow (1.4)$ if $C$ is faithful). 
Putting $\C=\D=\mathcal{I}_C(S)$ in Theorem \ref{thm-3.14}, we get 
$(2.1)\Longleftrightarrow (2.2)\Longleftrightarrow (2.3)\ (\Longleftrightarrow (2.4)$ if $C$ is faithful).
By \cite[Theorem 5.4(1)]{HS}, we have $(1.2)\Longleftrightarrow (2.2)$.

By Corollary \ref{cor-4.5}, we have 
$\mathcal{P}(R)\subseteq\mathcal{SGF}_C(R)$ and $\mathcal{I}(S)\subseteq\mathcal{GFI}_C(S)$. Note that 
$$\mathcal{P}_C(R)\subseteq\mathcal{F}_C(R)\cap{^{\bot}\mathcal{F}_C(R)}\ \text{and}\ 
\mathcal{I}_C(S)\subseteq\mathcal{FI}_C(S)\cap{\mathcal{FI}_C(S)^{\bot}}$$
by \cite[Lemma 2.5(1)]{TH4} and Proposition \ref{prop-4.3}(2), respectively.
Putting $\C=\mathcal{P}_C(R)$ and $\D=\mathcal{F}_C(R)$ in Theorem \ref{thm-3.11}, 
we get $(3.1)\Longleftrightarrow (3.2)\Longleftrightarrow (3.3)\ (\Longleftrightarrow (3.4)$ if $C$ is faithful).   
Putting $\C=\mathcal{I}_C(S)$ and $\D=\mathcal{FI}_C(S)$ in Theorem \ref{thm-3.14}, we get 
$(4.1)\Longleftrightarrow (4.2)\Longleftrightarrow (4.3)\ (\Longleftrightarrow (4.4)$ if $C$ is faithful)

By Lemma \ref{lem-2.7}, we get $(1.1)\Longleftrightarrow (3.1)$. It is trivial that $(4.1)\Longrightarrow (2.1)$.

$(2.1)+(3.1)\Longrightarrow (4.1)$ By (2.1), it suffices to prove $\pd_SN\leq n$ for any $N\in\mathcal{FI}_C(S)$.

Let $N\in\mathcal{FI}_C(S)$. Then $N=M_*$ for some $M\in\mathcal{FI}(R)$. By Proposition \ref{prop-4.3}(2), we have
$M\in\mathcal{B}_C(R)$ and $N\in\mathcal{A}_C(S)$. It holds that
\begin{align*}
& \mathcal{F}_C(R)\text{-}\pd M=\fd_SM_*\ \ \ \text{(by \cite[Theorem 3.5(1)]{TH2})}\\
= & \id_{S^{op}}{M_*}^+\leq \id_{S^{op}}{E(M)_*}^+\ \ \ \text{(by \cite[Theorem 2.1]{F} and Proposition \ref{prop-4.3}(1))}\\
= & \fd_SE(M)_*\leq\pd_SE(M)_*\leq n.\ \ \ \text{(by \cite[Theorem 2.1]{F} and (2.1))}
\end{align*}
Since $\id\mathcal{F}_C(R)\leq n$ by (3.1), we have $\id_RM\leq n$. Since $M\in\mathcal{B}_C(R)$, we have $C\otimes_SM_*\cong M$.
Notice that $N\in\mathcal{A}_C(S)$, it follows from \cite[Theorem 3.5(3)]{TH2} that 
$$\mathcal{I}_C(S)\text{-}\id N=\id_RC\otimes_SN=\id_RC\otimes_SM_*=\id_RM\leq n.$$
Since $\pd\mathcal{I}_C(S)\leq n$ by (2.1), we have $\pd_SN\leq n$. This proves (i) and (ii).

The assertion (iv) is a special case of (iii). In the following, we prove (iii).

Let $R$ be a left Noetherian ring and $S$ a right coherent ring. By \cite[Lemma 4.1]{HW}, we have 
$\mathcal{I}(R)\subseteq\mathcal{B}_C(R)$. Then 
$$\FPid_{S^{op}}C=\fd\mathcal{I}_C(S)=\mathcal{F}_C(R){\text -}\pd\mathcal{I}(R) \eqno{(4.5)}$$
by \cite[Lemma 2.1(2)]{ZD} and \cite[Theorem 3.5(1)]{TH2}.
 
$(1.1)\Longrightarrow (5)$ By the assertion (1.1), we have $\FPid_RC=\id_RC\leq n$. 
Then by the equality (4.5) and the assertion (1.1) again, we have 
$$\FPid_{S^{op}}C=\mathcal{F}_C(R){\text -}\pd\mathcal{I}(R)\leq \mathcal{P}_C(R){\text -}\pd\mathcal{I}(R)\leq n.$$

$(5)\Longrightarrow (1.1)$ By the assertion (5), we have $\id_RC=\FPid_RC\leq n$. 
Since $\mathcal{P}_C(R)=\Add_RC$ by \cite[Proposition 2.4(a)]{LHX}, we have
$\id\mathcal{P}_C(R)\leq n$ by \cite[Theorem 1.1]{B}. On the other hand, since
$\mathcal{F}_C(R){\text -}\pd\mathcal{I}(R)=\FPid_{S^{op}}C\leq n$ by the equality (4.5) and the assertion (5),
it follows from \cite[Lemma 5.19(1)]{HS} that $\mathcal{P}_C(R){\text -}\pd\mathcal{I}(R)\leq n$.
\end{proof}

In general, the conditions in Theorem \ref{thm-4.6}(i) are not left-right symmetric, see Remark \ref{rem-4.17}(1) below.

When ${_RC_S=_RR_R}$, we write
$$\Gpd_RM:=\GCpd_RM\ \ \text{and}\ \ \Gid_RM:=\GCid_RM,$$
$$\SGFpd_RM:=\SGFCpd_RM\ \ \text{and}\ \ \GFPid_RM:=\GFPCid_RM.$$
Recall that a left and right Noetherian ring $R$ is called {\it $n$-Gorenstein} if $\id_RR=\id_{R^{op}}R\leq n$.
The following result provides several equivalent characterizations for the category $\Mod R$ and the ring $R$ to be $n$-Gorenstein.

\begin{cor} \label{cor-4.7}
It holds that
\begin{enumerate}
\item[{\rm (i)}]
For any $n\ge 0$, the following statements are equivalent.
\begin{enumerate}
\item[{\rm (1.1)}] $\Mod R$ is $n$-Gorenstein $($that is, $\pd \mathcal{I}(R)\leq n$ and $\id\mathcal{P}(R)\leq n)$.
\item[{\rm (1.2)}] $\Gpd_RM\leq n$ for any $M\in\Mod R$.
\item[{\rm (1.3)}] $\Gpd\mathcal{I}(R)\leq n$ and $\id\mathcal{P}(R)\leq n$.
\item[{\rm (1.4)}] $\Gid_RN\leq n$ for any $N\in\Mod R$.
\item[{\rm (1.5)}] $\Gid\mathcal{P}(R)\leq n$ and $\pd \mathcal{I}(R)\leq n$.
\item[{\rm (1.6)}] $\id^{\leq n}(R)=\pd^{\leq n}(R)$.
\item[{\rm (1.7)}] $\id^{\leq n}(R)\subseteq\pd^{\leq n}(R)\subseteq\Gid^{\leq n}(R)$.
\item[{\rm (2.1)}] $\Mod R$ is right $n$-$(\mathcal{P}(R),\mathcal{F}(R))$-Gorenstein 
$($that is, $\pd \mathcal{I}(R)\leq n$ and $\id \mathcal{F}(R)\leq n)$.
\item[{\rm (2.2)}] $\SGFpd_RM\leq n$ for any $M\in\Mod R$.
\item[{\rm (2.3)}] $\SGFpd\mathcal{I}(R)\leq n$ and
$\id \mathcal{F}(R)\leq n$.
\item[{\rm (2.4)}] $\fd^{\leq n}(R)=\id^{\leq n}(R)=\pd^{\leq n}(R)$.
\item[{\rm (3.1)}] $\Mod R$ is left $n$-$(\mathcal{I}(R),\mathcal{FI}(R))$-Gorenstein 
$($that is, $\id \mathcal{P}(R)\leq n$ and $\pd \mathcal{FI}(R)\leq n)$.
\item[{\rm (3.2)}] $\GFPid_RM\leq n$ for any $M\in\Mod R$.
\item[{\rm (3.3)}] $\GFPid \mathcal{P}(R)\leq n$
and $\pd \mathcal{FI}(R)\leq n$.
\item[{\rm (3.4)}] $\FPid^{\leq n}(R)=\pd^{\leq n}(R)=\id^{\leq n}(R)$.
\item[{\rm (3.5)}] $\FPid^{\leq n}(R)\subseteq\pd^{\leq n}(R)\subseteq\GFPid^{\leq n}(R)$.
\end{enumerate}
\item[{\rm (ii)}]
If $R$ is a left Noetherian and right coherent ring, 
then all conditions in {\rm (i)} and the below condition are equivalent.
\begin{enumerate}
\item[{\rm (4)}] $\FPid_RR\leq n$ and $\FPid_{R^{op}}R\leq n$.
\end{enumerate}
\item[{\rm (iii)}]
If $R$ is a left and right Noetherian ring, 
then all conditions in {\rm (i)} and the below condition are equivalent.
\begin{enumerate}
\item[{\rm (5)}] $R$ is $n$-Gorenstein.
\end{enumerate}
\end{enumerate}
\end{cor}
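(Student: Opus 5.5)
The plan is to specialize Theorem \ref{thm-4.6} to the case ${_RC_S}={_RR_R}$, where $S=R$. In this case $\mathcal{P}_C(R)=\mathcal{P}(R)$, $\mathcal{F}_C(R)=\mathcal{F}(R)$, $\mathcal{I}_C(S)=\mathcal{I}(R)$ and $\mathcal{FI}_C(S)=\mathcal{FI}(R)$. Moreover, the relative Gorenstein dimensions become $\Gpd$, $\Gid$, $\SGFpd$ and $\GFPid$. The module $_RR$ is trivially faithful, since $M_*\cong M$, so part (ii) of Theorem \ref{thm-4.6} also applies.

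\textbf{Part (i).} First I match the conditions with those of Theorem \ref{thm-4.6}.

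\begin{enumerate}
\item[(a)] Conditions (1.1), (1.2), (1.3) here are (1.1), (1.2), (1.3) there. Condition (1.1) is also the $n$-Gorenstein condition of Example \ref{exa-3.9}.
\item[(b)] Conditions (1.4) and (1.5) are (2.2) and (2.3) there, and (2.1) there reads $\id\mathcal{P}(R)\leq n$ and $\pd\mathcal{I}(R)\leq n$, i.e. (1.1) again.
\item[(c)] Condition (1.6) is (1.4) there.
\item[(d)] Conditions (2.1)--(2.4) are (3.1)--(3.4) there, using faithfulness for (2.4).
\item[(e)] Conditions (3.1)--(3.4) are (4.1)--(4.4) there.
\end{enumerate}

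The remaining two conditions, (1.7) and (3.5), have the shape of condition (5) in Theorem \ref{thm-3.14}. I apply part (ii) of that theorem in two ways.

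\begin{enumerate}
\item[(a)] Take $\C=\D=\mathcal{I}(R)$. Then $l\mathcal{G}(\mathcal{I}(R))$ is the Gorenstein injective class, which is coresolving by (b) in the proof of Theorem \ref{thm-4.6}.
\item[(b)] Take $\C=\mathcal{I}(R)$ and $\D=\mathcal{FI}(R)$. Then $l\mathcal{G}(\C,\D)=\mathcal{GFI}_R(R)$, which is coresolving by Corollary \ref{cor-4.5}(3).
\end{enumerate}

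In both cases the hypotheses of Theorem \ref{thm-3.14} hold.

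\begin{enumerate}
\item[(a)] The inclusion $\mathcal{I}(R)\subseteq\D\cap\D^{\bot}$ holds because injectives are FP-injective and lie in every right orthogonal.
\item[(b)] The inclusion $\mathcal{I}(R)\subseteq l\mathcal{G}(\C,\D)$ holds by Proposition \ref{prop-4.4}(1).
\end{enumerate}

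One more hypothesis is needed: part (ii) of Theorem \ref{thm-3.14} asks that $l\mathcal{G}$ be preenveloping. Over an arbitrary ring this is the point I expect to be the main obstacle.

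I will try two routes. The first is to quote known results: the Gorenstein injective class is enveloping/preenveloping by the cotorsion pair $({^{\bot}\mathcal{G}(\mathcal{I}(R))},\mathcal{G}(\mathcal{I}(R)))$ cited in Remark \ref{rem-3.18}(2). The Gorenstein FP-injective class would be handled by a similar cotorsion-pair argument.

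The second route avoids preenvelopes. Implication (5)$\Rightarrow$(3) in Theorem \ref{thm-3.14} holds without extra hypotheses, so (1.7)$\Rightarrow$(1.5) and (3.5)$\Rightarrow$(3.3) are free. For the converse I argue directly from (1.6) and (3.4).

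\begin{enumerate}
\item[(a)] From (1.6) we have $\pd^{\leq n}(R)=\id^{\leq n}(R)$. From (1.4), $\Gid M\leq n$ for every module $M$. Hence $\pd^{\leq n}(R)\subseteq\Gid^{\leq n}(R)$, and (1.7) follows.
\item[(b)] Similarly, (3.4) gives $\FPid^{\leq n}(R)=\pd^{\leq n}(R)$, and (3.2) gives $\GFPid M\leq n$ for every $M$. Together these give (3.5).
\end{enumerate}

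So the second route works with no preenveloping needed, and I will use it.

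\textbf{Parts (ii) and (iii).} These are Theorem \ref{thm-4.6}(iii) and (iv) with $S=R$.

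\begin{enumerate}
\item[(a)] For (ii), condition (5) of Theorem \ref{thm-4.6} becomes $\FPid_RR\leq n$ and $\FPid_{R^{op}}R\leq n$, which is (4).
\item[(b)] For (iii), condition (6) of Theorem \ref{thm-4.6} becomes $\id_RR\leq n$ and $\id_{R^{op}}R\leq n$, which is exactly the definition of $R$ being $n$-Gorenstein.
\end{enumerate}
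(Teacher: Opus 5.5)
Your proposal is correct, and its main structure matches the paper's proof. You specialize Theorem \ref{thm-4.6} to ${_RC_S}={_RR_R}$ (which is faithful). This gives the equivalence of (1.1)--(1.6), (2.1)--(2.4) and (3.1)--(3.4), and parts (iii) and (iv) of Theorem \ref{thm-4.6} give (ii) and (iii).

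Where you genuinely differ is in how you handle (1.7) and (3.5).

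\begin{enumerate}
\item[(a)] The paper routes them through Theorem \ref{thm-3.14}(ii). To do so it must import nontrivial external results: that the Gorenstein injective and Gorenstein FP-injective classes are preenveloping over an arbitrary ring.
\item[(b)] You avoid this entirely. The implications (1.7)$\Rightarrow$(1.5) and (3.5)$\Rightarrow$(3.3) are immediate. Conversely, (1.6) together with (1.4) gives (1.7), and (3.4) together with (3.2) gives (3.5). This works because (1.4) and (3.2) make $\Gid^{\leq n}(R)$ and $\GFPid^{\leq n}(R)$ all of $\Mod R$, so the second inclusion is automatic.
\end{enumerate}

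Your route is more elementary and self-contained. It also shows that the preenveloping hypothesis in Theorem \ref{thm-3.14}(ii) is not needed for (3)$\Rightarrow$(5), and dually the precovering hypothesis in Theorem \ref{thm-3.11}(ii). The second inclusion in (5) is trivial once (2) holds. The first inclusion follows from Proposition \ref{prop-3.4} with $\U=\mathcal{P}(\A)$ and $\E=\D$. The paper's route gains only uniformity with its general theorem.

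One small correction concerns (iii). Theorem \ref{thm-4.6}(iv) with $S=R$ gives $\id_RR\leq n$ and $\id_{R^{op}}R\leq n$. The paper, however, defines an $n$-Gorenstein ring by the equality $\id_RR=\id_{R^{op}}R\leq n$. So this is not literally the definition. To obtain the equality you need Zaks' lemma: over a two-sided Noetherian ring, if the left and right self-injective dimensions are both finite, they coincide.
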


\begin{proof}
By \cite[Theorem 5.6]{SS} and \cite[Theorem 2.6]{H} (respectively, \cite[Theorem 3]{GI} and \cite[Theorem 2.8]{YLL}), 
we have that the class of Gorenstein injective (respectively, Gorenstein FP-injective) left $R$-modules
is preenveloping and coresolving. On the other hand, if $R$ is a left and right Noetherian ring, then
$(1.2)\Longleftrightarrow (4)$ by \cite[Theorem 1.2]{Hu3}. Now, putting $_RC_S={_RR_R}$ in Theorem \ref{thm-4.6}, 
and then, by combining with Theorem \ref{thm-3.14}, we obtain the desired conclusion.
\end{proof}

As an immediate consequence of Theorem \ref{thm-4.6} and Corollary \ref{cor-4.7}, we get the following result, in which 
the first equality in the assertion (1) was proved in \cite[Corollary 5.13(1)]{HS}.

\begin{cor} \label{cor-4.8}
It holds that 
\begin{enumerate}
\item[{\rm (1)}]
\begin{align*}
&\sup\{\GCpd_RM\mid M\in\Mod R\}=\sup\{\GCid_SN\mid N\in\Mod S\} \\
=&\sup\{\SGFCpd_RM\mid M\in\Mod R\}=\sup\{\GFPCid_SN\mid N\in\Mod S\}.
\end{align*}
\item[{\rm (2)}] {\rm (\cite[Theorem 1.1]{BM}, \cite[Corollary 8.8]{HMP} and \cite[Theorem 1.1]{Wa})}
\begin{align*}
&\sup\{\Gpd_RM\mid M\in\Mod R\}=\sup\{\Gid_RM\mid M\in\Mod R\} \\
=&\sup\{\SGFpd_RM\mid M\in\Mod R\}=\sup\{\GFPid_RM\mid M\in\Mod R\}.
\end{align*}
\end{enumerate}
\end{cor}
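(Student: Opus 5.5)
The statement is Corollary~\ref{cor-4.8}. I will derive both parts by reading off suprema from the equivalences already established, one value of $n$ at a time. The key observation is elementary. Suppose that for every $n\geq 0$ the conditions ``$\sup\{\alpha(M)\}\leq n$'' and ``$\sup\{\beta(N)\}\leq n$'' are equivalent. Then the two suprema coincide in $\mathbb{N}\cup\{\infty\}$. Indeed, if one of them equals $n<\infty$, the other is $\leq n$, and by symmetry they are equal. If one is infinite, the other cannot be finite, since otherwise the first would be bounded as well.

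For (1), I apply Theorem~\ref{thm-4.6}(i) for each fixed $n\geq 0$.
\begin{itemize}
\item Condition (1.2) says $\sup\{\GCpd_RM\mid M\in\Mod R\}\leq n$.
\item Condition (2.2) says $\sup\{\GCid_SN\mid N\in\Mod S\}\leq n$.
\item Condition (3.2) says $\sup\{\SGFCpd_RM\mid M\in\Mod R\}\leq n$.
\item Condition (4.2) says $\sup\{\GFPCid_SN\mid N\in\Mod S\}\leq n$.
\end{itemize}
Theorem~\ref{thm-4.6}(i) asserts that these four conditions are equivalent for every $n$. By the observation above, all four suprema are therefore equal.

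For (2), I use Corollary~\ref{cor-4.7}(i) in the same way. Conditions (1.2), (1.4), (2.2) and (3.2) there say, respectively, that the suprema of $\Gpd_R$, $\Gid_R$, $\SGFpd_R$ and $\GFPid_R$ over all of $\Mod R$ are at most $n$. These conditions are equivalent for every $n$. Alternatively, one can specialize part (1) to ${_RC_S}={_RR_R}$, so that $S=R$ and the $C$-versions of the dimensions become the classical ones.

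There is no real obstacle in this argument. The only point requiring care is the case of infinite suprema, and the equivalence-for-all-$n$ argument above handles it.
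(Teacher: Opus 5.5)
Your proposal is correct and follows essentially the same route as the paper, which obtains the corollary as an immediate consequence of the for-every-$n$ equivalences (1.2)$\Longleftrightarrow$(2.2)$\Longleftrightarrow$(3.2)$\Longleftrightarrow$(4.2) in Theorem~\ref{thm-4.6}(i) and (1.2)$\Longleftrightarrow$(1.4)$\Longleftrightarrow$(2.2)$\Longleftrightarrow$(3.2) in Corollary~\ref{cor-4.7}(i). Your explicit treatment of infinite suprema just spells out the step the paper leaves implicit.
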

 
By Theorems \ref{thm-3.11} and \ref{thm-3.14}, we also get the following result, 
which can be regarded as a supplement to Corollary \ref{cor-4.7}.

\begin{prop} \label{prop-4.9}
Let $n\geq 0$.
\begin{enumerate}
\item[{\rm (1)}] 
Let $\D$ be a subcategory of $\Mod R$ containing $\mathcal{P}(R)$. Then the following statements are equivalent.
\begin{enumerate}
\item[{\rm (1.1)}] $\Mod R$ is right $n$-$(\mathcal{P}(R),\D)$-Gorenstein 
$($that is, $\pd \mathcal{I}(R)\leq n$ and $\id \mathscr{D}\leq n)$.
\item[{\rm (1.2)}] $r\mathcal{G}(\mathcal{P}(R),\mathscr{D})$-$\pd M\leq n$ for any $M\in\Mod R$.
\item[{\rm (1.3)}] $r\mathcal{G}(\mathcal{P}(R),\mathscr{D})$-$\pd \mathcal{I}(R)\leq n$ and
$\id \mathscr{D}\leq n$.
\item[{\rm (1.4)}] $\mathscr{D}$-$\pd^{\leq n}(R)=\id^{\leq n}(R)=\pd^{\leq n}(R)$.
\end{enumerate}
\item[{\rm (2)}] 
Let $\D$ be a subcategory of $\Mod R$ containing $\mathcal{I}(R)$. Then the following statements are equivalent.
\begin{enumerate}
\item[{\rm (2.1)}] $\Mod R$ is left $n$-$(\mathcal{I}(R),\D)$-Gorenstein 
$($that is, $\id \mathcal{P}(R)\leq n$ and $\pd \mathscr{D}\leq n)$.
\item[{\rm (2.2)}] $l\mathcal{G}(\mathcal{I}(R),\mathscr{D})$-$\id M\leq n$ for any $M\in\Mod R$.
\item[{\rm (2.3)}] $l\mathcal{G}(\mathcal{I}(R),\mathscr{D})$-$\id \mathcal{P}(R)\leq n$
and $\pd \mathscr{D}\leq n$.
\item[{\rm (2.4)}] $\mathscr{D}$-$\id^{\leq n}=\pd^{\leq n}(R)=\id^{\leq n}(R)$.
\end{enumerate}
\end{enumerate}
\end{prop}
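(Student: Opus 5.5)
Part (1) is the special case $\C=\mathcal{P}(R)$ of Theorem \ref{thm-3.11}, and part (2) is the special case $\C=\mathcal{I}(R)$ of Theorem \ref{thm-3.14}, in both cases with $\A=\Mod R$. In fact (1) is Corollary \ref{cor-3.13} and (2) is Corollary \ref{cor-3.16} specialized to $\Mod R$. So the work is to check the standing hypotheses. First, $\Mod R$ is abelian, has enough projectives and injectives, and satisfies AB4 and AB4$^{*}$, since arbitrary direct sums and products of modules are exact. Hence all of Section 3.2 applies.

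For (1), put $\C=\mathcal{P}(R)$ in Theorem \ref{thm-3.11}.
\begin{enumerate}
\item[(a)] $\mathcal{P}(R)$ is closed under direct summands.
\item[(b)] $\mathcal{P}(R)\subseteq\D$ holds by hypothesis, and $\mathcal{P}(R)\subseteq{^{\bot}\D}$ holds since $\Ext^{\geq1}_R(P,-)=0$ for projective $P$.
\item[(c)] $\mathcal{P}(R)\subseteq r\mathcal{G}(\mathcal{P}(R),\D)$. Given $P$ projective, the complex $0\to P\to P\to 0$ is a $\Hom_R(-,\D)$-exact coresolution by objects of $\C$, and $P\in{^{\bot}\D}$. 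This is the observation already made in the proof of Corollary \ref{cor-3.13}.
\item[(d)] $\mathcal{P}(R)$ is closed under kernels of epimorphisms, since such epimorphisms split.
\end{enumerate}
By (a)--(d), Theorem \ref{thm-3.11}(i) gives the equivalence of its conditions (1)--(4). These are exactly (1.1)--(1.4), because $\mathcal{P}(R)$-$\pd^{\leq n}=\pd^{\leq n}(R)$.

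For (2), dually put $\C=\mathcal{I}(R)$ in Theorem \ref{thm-3.14}.
\begin{enumerate}
\item[(a)] $\mathcal{I}(R)$ is closed under direct summands.
\item[(b)] $\mathcal{I}(R)\subseteq\D$ by hypothesis, and $\mathcal{I}(R)\subseteq\D^{\bot}$ trivially.
\item[(c)] $\mathcal{I}(R)\subseteq l\mathcal{G}(\mathcal{I}(R),\D)$, using the trivial resolution $0\to I\to I\to 0$.
\item[(d)] $\mathcal{I}(R)$ is closed under cokernels of monomorphisms, since such monomorphisms split.
\end{enumerate}
Theorem \ref{thm-3.14}(i) then yields (2.1)$\Leftrightarrow$(2.2)$\Leftrightarrow$(2.3)$\Leftrightarrow$(2.4).

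There is no genuine obstacle here; the proof is a bookkeeping verification. The only point needing a moment's care is (c) in each part, namely checking membership in the one-sided Gorenstein subcategory. That follows directly from Definition \ref{def-2.5} with the length-zero (co)resolution.
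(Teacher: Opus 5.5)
Your proposal is correct and follows the paper's proof: the paper also gets (1) by putting $\mathscr{C}=\mathcal{P}(R)$ in Theorem \ref{thm-3.11} and (2) by putting $\mathscr{C}=\mathcal{I}(R)$ in Theorem \ref{thm-3.14}, noting only that $\mathcal{P}(R)\subseteq r\mathcal{G}(\mathcal{P}(R),\mathscr{D})$ and $\mathcal{I}(R)\subseteq l\mathcal{G}(\mathcal{I}(R),\mathscr{D})$ are trivial. Your explicit checks spell out what the paper leaves implicit: closure under direct summands, and closure under kernels of epimorphisms (respectively, cokernels of monomorphisms), which is what lets part (i) of each theorem deliver (1.4) and (2.4).
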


\begin{proof}
It is trivial that $\mathcal{P}(R)\subseteq r\mathcal{G}(\mathcal{P}(R),\mathscr{D})$ and 
$\mathcal{I}(R)\subseteq l\mathcal{G}(\mathcal{I}(R),\mathscr{D})$. 
Putting $\C=\mathcal{P}(R)$ in Theorem \ref{thm-3.11}, we get the assertion (1). 
Putting $\C=\mathcal{I}(R)$ in Theorem \ref{thm-3.14}, we get the assertion (2).
\end{proof}


Recall from \cite{BGH} that a module $M\in\Mod R$ is called {\it level}
if $M\in\mathcal{L}(R)$, where $\mathcal{L}(R)=\{M\in\Mod R\mid\Tor^R_1(X,M)=0$ 
(equivalently, $\Tor^R_{\geq 1}(X,M)=0$) for any right $R$-module $X$ 
admitting a degreewise finite $R^{op}$-projective resolution$\}$;
and a module $M\in\Mod R$ is called
{\it Gorenstein AC-projective} if $M\in\mathcal{GP}_{ac}(R)$, where
$$\mathcal{GP}_{ac}(R):=r\mathcal{G}(\mathcal{P}(R),\mathcal{L}(R)).$$
Also recall from \cite{BGH} that a module $M\in\Mod R$ is called {\it absolutely clean}
if $M\in\mathcal{AC}(R)$, where $\mathcal{AC}(R)=\{M\in\Mod R\mid\Ext_R^1(X,M)=0$ 
(equivalently, $\Ext_R^{\geq 1}(X,M)=0$) for any left $R$-module $X$ 
admitting a degreewise finite $R$-projective resolution$\}$;
and a module $M\in\Mod R$ is called
{\it Gorenstein AC-injective} if $M\in\mathcal{GI}_{ac}(R)$, where
$$\mathcal{GI}_{ac}(R):=l\mathcal{G}(\mathcal{I}(R),\mathcal{AC}(R)).$$

%
%

\begin{lem}\label{lem-4.10}
For any $M\in\Mod R$, it holds that
\begin{enumerate}
\item[{\rm (1)}] $\mathcal{AC}(R)\text{-}\id M=\mathcal{L}(R^{op})\text{-}\pd M^+$.
\item[{\rm (2)}] $\mathcal{L}(R)\text{-}\pd M=\mathcal{AC}(R^{op})\text{-}\id M^+$.
\end{enumerate}
\end{lem}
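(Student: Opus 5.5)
This is a direct application of Proposition \ref{prop-4.1}. Take $\X=\mathcal{AC}(R)$ in $\Mod R$ and $\Y=\mathcal{L}(R^{op})$ in $\Mod R^{op}$; assertion (1) is then Proposition \ref{prop-4.1}(1). Assertion (2) is Proposition \ref{prop-4.1}(2) applied to the mirror pair $\X=\mathcal{AC}(R^{op})$ in $\Mod R^{op}$ and $\Y=\mathcal{L}(R)$ in $\Mod R$. Everything is left-right symmetric, so the hypotheses only need to be checked once.

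\textbf{Structural hypotheses.} I will check the following.
\begin{enumerate}
\item[(i)] $\mathcal{AC}(R)$ is coresolving and closed under direct summands.
\item[(ii)] $\mathcal{L}(R^{op})$ is resolving and closed under direct summands.
\end{enumerate}
Both classes are defined by the vanishing of $\Ext^{\geq 1}_R(X,-)$, respectively $\Tor^R_{\geq 1}(-,X)$, against all modules $X$ admitting a degreewise finite projective resolution. This uses the parenthetical equivalences recorded in the definitions from \cite{BGH}. Such vanishing classes are closed under direct summands and extensions. Injectives lie in $\mathcal{AC}(R)$ and projectives (indeed flats) lie in $\mathcal{L}(R^{op})$. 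Closure under cokernels of monomorphisms, respectively kernels of epimorphisms, follows from the long exact sequences once we use vanishing in all degrees $\geq 1$.

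\textbf{Character-module duality.} These are conditions (a) and (b) of Proposition \ref{prop-4.1}. Let $X$ be a left $R$-module with a degreewise finite projective resolution, and let $Y$ be a right $R$-module with such a resolution. The standard isomorphisms are
$$\Tor_i^R(M^+,X)\cong \Ext^i_R(X,M)^+,\qquad \Ext^i_{R^{op}}(Y,N^+)\cong \Tor_i^R(Y,N)^+ .$$
The first is valid because $X$ is of type $FP_\infty$; it is the usual Hom-evaluation isomorphism applied to a resolution of $X$ by finitely generated projectives. The second is adjunction and holds for any $Y$. Since $(-)^+$ is faithful, that is, $A^+=0$ iff $A=0$, these give:
\begin{enumerate}
\item[(a)] $M\in\mathcal{AC}(R)$ if and only if $M^+\in\mathcal{L}(R^{op})$;
\item[(b)] $N\in\mathcal{L}(R^{op})$ if and only if $N^+\in\mathcal{AC}(R)$, where $N^+$ is now a left $R$-module.
\end{enumerate}
These are exactly \cite[Theorem 2.10]{BGH}-type duality statements, and I would cite \cite{BGH} for them.

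\textbf{Main obstacle.} The delicate point is the first isomorphism. It requires the degreewise finiteness of the resolution of $X$, and that is precisely why absolutely clean and level modules, rather than FP-injective and flat ones, are paired here. Once (a) and (b) are in hand, Proposition \ref{prop-4.1} yields both equalities immediately.
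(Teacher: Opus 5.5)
Your proposal is correct and follows the paper's proof: the paper likewise applies Proposition \ref{prop-4.1} to the pairs $(\mathcal{AC}(R),\mathcal{L}(R^{op}))$ and $(\mathcal{AC}(R^{op}),\mathcal{L}(R))$. The only difference is that the paper cites \cite[Propositions 2.7(3) and 2.10(3)]{BGH} for the (co)resolving and direct-summand properties and \cite[Theorem 2.12]{BGH} for the character-module dualities, whereas you sketch these yourself via long exact sequences and the $\Tor$--$\Ext$ duality isomorphisms.
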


\begin{proof}
By \cite[Propositions 2.7(3) and 2.10(3)]{BGH}, we have that 
both $\mathcal{AC}(R)$ and $\mathcal{AC}(R^{op})$ are coresolving and closed under direct summands, and 
both $\mathcal{L}(R)$ and $\mathcal{L}(R^{op})$ are resolving and closed under direct summands.
On the other hand, by \cite[Theorem 2.12]{BGH}, we have 
\begin{enumerate}
\item[$(a)$] 
a module $M\in\Mod R$ (respectively, $\Mod R^{op}$), it holds that $M\in\mathcal{AC}(R)$ 
(respectively, $\mathcal{AC}(R^{op})$) 
if and only if $M^+\in\mathcal{L}(R^{op})$ (respectively, $\mathcal{L}(R)$);
\item[$(b)$]
a module $N\in\Mod R^{op}$ (respectively, $\Mod R$), it holds that $N\in\mathcal{L}(R^{op})$ 
(respectively, $\mathcal{L}(R)$)
if and only if $N^+\in\mathcal{AC}(R)$ (respectively, $\mathcal{AC}(R^{op})$).
\end{enumerate}
Now the assertion follows from Proposition \ref{prop-4.1}.
\end{proof}

The following result provides some special $n$-Gorenstein categories.

\begin{prop} \label{prop-4.11}
Let $n\geq 0$.
\begin{enumerate}
\item[{\rm (1)}] 
The following statements are equivalent.
\begin{enumerate}
\item[{\rm (1.1)}] $\Mod R$ is right $n$-$(\mathcal{P}(R),\mathcal{L}(R))$-Gorenstein 
$($that is, $\pd \mathcal{I}(R)\leq n$ and $\id \mathcal{L}(R)\leq n)$.
\item[{\rm (1.2)}] $\mathcal{GP}_{ac}(R)$-$\pd M\leq n$ for any $M\in\Mod R$.
\item[{\rm (1.3)}] $\mathcal{GP}_{ac}(R)$-$\pd \mathcal{I}(R)\leq n$ and
$\id \mathcal{L}(R)\leq n$.
\item[{\rm (1.4)}] $\mathcal{L}(R)$-$\pd^{\leq n}=\id^{\leq n}(R)=\pd^{\leq n}(R)$.
\item[{\rm (1.5)}] $\mathcal{L}(R)$-$\pd^{\leq n}\subseteq\id^{\leq n}(R)\subseteq 
r\mathcal{G}(\mathcal{P}(R),\mathcal{L}(R))$-$\pd^{\leq n}$.
\end{enumerate}
\item[{\rm (2)}] 
The following statements are equivalent.
\begin{enumerate}
\item[{\rm (2.1)}] $\Mod R$ is left $n$-$(\mathcal{I}(R),\mathcal{AC}(R))$-Gorenstein 
$($that is, $\id \mathcal{P}(R)\leq n$ and $\pd \mathcal{AC}(R)\leq n)$.
\item[{\rm (2.2)}] $\mathcal{GI}_{ac}(R)$-$\id M\leq n$ for any $M\in\Mod R$.
\item[{\rm (2.3)}] $\mathcal{GI}_{ac}(R)$-$\id \mathcal{P}(R)\leq n$
and $\pd \mathcal{AC}(R)\leq n$.
\item[{\rm (2.4)}] $\mathcal{AC}(R)$-$\id^{\leq n}=\pd^{\leq n}(R)=\id^{\leq n}(R)$.
\item[{\rm (2.5)}] $\mathcal{AC}(R)$-$\id^{\leq n}\subseteq\pd^{\leq n}(R)\subseteq 
l\mathcal{G}(\mathcal{I}(R),\mathcal{AC}(R))$-$\id^{\leq n}$.
\end{enumerate}
\end{enumerate}
If one of the equivalent conditions in $(1)$ or $(2)$ is satisfied, then it holds that
\begin{enumerate}
\item[{\rm (3)}] $\id_RR\leq n$ and $\mathcal{AC}(R^{op})\text{-}\id R\leq n$.
\item[{\rm (4)}] $\mathcal{AC}(R)\text{-}\id R\leq n$ and $\mathcal{AC}(R^{op})\text{-}\id R\leq n$.
\end{enumerate}
If $R$ is a left Noetherian and right coherent ring, then all the above conditions are equivalent.
\end{prop}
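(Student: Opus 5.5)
Parts (1) and (2) follow from Theorems \ref{thm-3.11} and \ref{thm-3.14}. The remaining step is to link them through (3) and (4).

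For (1), I put $\C=\mathcal{P}(R)$ and $\D=\mathcal{L}(R)$ in Theorem \ref{thm-3.11}. Then $\C$ is self-orthogonal, closed under direct summands and kernels of epimorphisms, and $\mathcal{P}(R)\subseteq\mathcal{L}(R)\cap{^\bot\mathcal{L}(R)}$. Also $\mathcal{P}(R)\subseteq r\mathcal{G}(\mathcal{P}(R),\mathcal{L}(R))$ trivially, since $0\to P\to P\to 0\to\cdots$ works. So (i) gives $(1.1)$--$(1.4)$. For (ii), I need $\mathcal{GP}_{ac}(R)$ to be precovering and resolving. Resolving follows from \cite[Theorem 3.4]{GW}; precovering I would quote from the Gorenstein AC literature (Bravo--Gillespie--Hovey: $\mathcal{GP}_{ac}(R)$ is the left half of a complete cotorsion pair). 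Part (2) is dual, with $\C=\mathcal{I}(R)$ and $\D=\mathcal{AC}(R)$ in Theorem \ref{thm-3.14}, using that $\mathcal{GI}_{ac}(R)$ is coresolving and preenveloping (again from BGH).

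Next, (1) and (2) are equivalent, and this is the main step. Both imply that $\Mod R$ is $n$-Gorenstein, i.e.\ $\pd\mathcal{I}(R)\le n$ and $\id\mathcal{P}(R)\le n$. Conversely, assume $\Mod R$ is $n$-Gorenstein. Then $\id\mathcal{F}(R)\le n$ by Corollary \ref{cor-4.7} ($(1.1)\Leftrightarrow(2.1)$) and $\pd\mathcal{FI}(R)\le n$ (Corollary \ref{cor-4.7}(3.1)). The missing links are $\id\mathcal{L}(R)\le n$ and $\pd\mathcal{AC}(R)\le n$. For a level module $L$, $L^+$ is absolutely clean over $R^{op}$, hence FP-injective. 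I would try to use the characters route: $\fd_RL=\id_{R^{op}}L^+$. If $\pd\mathcal{FI}(R^{op})$ were $\le n$ we would get $\fd L\le n$, so $L\in\fd^{\le n}(R)=\id^{\le n}(R)$ by Corollary \ref{cor-4.7}(2.4). But the hypothesis is one-sided, so instead I would use $\pd\mathcal{FI}(R)\le n$ together with Lemma \ref{lem-4.10}.

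Concretely, $\mathcal{L}(R)$-$\pd M=\mathcal{AC}(R^{op})$-$\id M^+$, and I would compare $\mathcal{AC}$-dimensions with FP-injective ones. If the argument gets stuck here, I would fall back to showing $\pd\mathcal{AC}(R)\le n$ directly: absolutely clean modules are pure submodules of their injective envelopes, so we may pass to characters and use Corollary \ref{cor-4.2}-type purity closure of $\pd^{\le n}\!=\id^{\le n}$ (which is closed under pure submodules via the $\fd$ equality). Then $(2.4)$ follows from $(1.4)$ via Proposition \ref{prop-4.1}, using that $(-)^+$ interchanges $\mathcal{L}$ and $\mathcal{AC}$.

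For (3) and (4): from (1.1), $\id_RR\le n$ and $\mathcal{AC}(R)$-$\id R\le\id_RR\le n$. By Lemma \ref{lem-4.10}(1), $\mathcal{AC}(R^{op})$-$\id R=\mathcal{L}(R)$-$\pd R^+$, and $R^+\in\mathcal{I}(R)$ has $\pd R^+\le n$, so this is $\le n$. The converse in the left Noetherian, right coherent case reduces to Corollary \ref{cor-4.7}(ii). Over a left Noetherian ring $\mathcal{AC}(R)=\mathcal{FI}(R)=\mathcal{I}(R)$, so $\mathcal{AC}(R)$-$\id R=\FPid_RR$. Over a right coherent ring $\mathcal{AC}(R^{op})=\mathcal{FI}(R^{op})$, so $\mathcal{AC}(R^{op})$-$\id R=\FPid_{R^{op}}R$. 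Thus (3) and (4) become condition (4) of Corollary \ref{cor-4.7}, which is equivalent to $n$-Gorensteinness, and that equals (1) by the step above.
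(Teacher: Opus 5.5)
\textbf{Where the proposal breaks.} Your treatment of parts (1) and (2) matches the paper. You substitute $\C=\mathcal{P}(R)$, $\D=\mathcal{L}(R)$ in Theorem \ref{thm-3.11} and $\C=\mathcal{I}(R)$, $\D=\mathcal{AC}(R)$ in Theorem \ref{thm-3.14}, using Bravo--Gillespie--Hovey for precovering/preenveloping. Your argument for $(1)\Rightarrow(3)\Rightarrow(4)$ also matches: $R\in\mathcal{L}(R)$, $R^+\in\mathcal{I}(R)$, and the right-module version of Lemma \ref{lem-4.10}(1).

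The problem is what you call ``the main step.'' The proposition does not claim that (1) and (2) are equivalent for an arbitrary ring. The paper explicitly leaves this open as Question \ref{ques-4.12}, answering it only for two-sided coherent or commutative rings. Your sketch of it also rests on false claims:
\begin{itemize}
\item An absolutely clean module need not be FP-injective; the implication runs the other way. For a level module $L$, $\Ext^1_{R^{op}}(X,L^+)\cong\Tor_1^R(X,L)^+$, so $L^+$ is FP-injective iff $L$ is flat.
\item Absolutely clean modules need not be pure in their injective envelopes, since a pure submodule of an injective module is FP-injective.
\item Passing from (1.4) to (2.4) via $(-)^+$ and Proposition \ref{prop-4.1} sends you to $\mathcal{L}(R^{op})$, i.e.\ to right modules, where (1.4) gives no information. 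This side-switch is exactly why the paper's positive answer needs commutativity.
\end{itemize}
So this step is a genuine gap. Your last paragraph concludes ``that equals (1) by the step above,'' so the final assertion of your proof currently depends on it.

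\textbf{How to repair it.} You never need the general equivalence. First, $(2)\Rightarrow(3)$ follows by the same argument as $(1)\Rightarrow(3)$, because (2.1) also gives $\id_RR\le n$ and $\pd R^+\le\pd\mathcal{AC}(R)\le n$. Second, in the left Noetherian, right coherent case:
\begin{itemize}
\item right coherence gives $\mathcal{L}(R)=\mathcal{F}(R)$;
\item left Noetherianity gives $\mathcal{AC}(R)=\mathcal{FI}(R)=\mathcal{I}(R)$.
\end{itemize}
Hence (1.1) is literally condition (2.1) of Corollary \ref{cor-4.7}, or equivalently (1.4) is its (2.4). Likewise (2.1) is its (1.1), or equivalently (2.4) is its (1.6). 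You already identified (3) and (4) with condition (4) of Corollary \ref{cor-4.7}, so Corollary \ref{cor-4.7}(ii) closes the loop. With this substitution your proof coincides with the paper's.
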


\begin{proof}
By \cite[Theorem 8.5 and Lemma 8.6]{BGH}, we have that $\mathcal{GP}_{ac}(R)$ is precovering and resolving.
By \cite[Theorem 5.5 and Lemma 5.6]{BGH}, we have that $\mathcal{GI}_{ac}(R)$ is preenveloping and coresolving.
Putting $\mathscr{C}=\mathcal{P}(R)$ and $\D=\mathcal{L}(R)$ in Theorem \ref{thm-3.11}, 
we get the assertion (1). Putting $\mathscr{C}=\mathcal{I}(R)$ and $\D=\mathcal{AC}(R)$ 
in Theorem \ref{thm-3.14}, we get the assertion (2). 


$(1.4)$ (respectively, $(2.4))\Longrightarrow (3)$ 
By (1.4) (respectively, (2.4)), we have $\id_RR\leq n$.
Since ${R}^+\in\mathcal{I}(R)$ by \cite[Theorem 2.1]{F}, we have
$$\mathcal{AC}(R^{op})\text{-}\id R=\mathcal{L}(R)\text{-}\pd{R}^+\leq\pd_R{R}^+\leq n$$
by the symmetric version of Lemma \ref{lem-4.10}(1) and (1.4) (respectively, (2.4)).

Since $\mathcal{I}(R)\subseteq\mathcal{AC}(R)$, we have $\mathcal{AC}(R)\text{-}\id R\leq\id_RR$, 
and thus $(3)\Longrightarrow (4)$ follows.

Assume that $R$ is a left Noetherian and right coherent ring. Then 
$$\mathcal{L}(R)=\mathcal{F}(R)\ \text{and}\ \mathcal{AC}(R)=\mathcal{FI}(R)=\mathcal{I}(R).$$
It yields $(4)\Longrightarrow (3)$ immediately. In addition, it yields 
$(1.4)\Longleftrightarrow (3)\Longleftrightarrow (2.4)$ by Corollary \ref{cor-4.7}.
%
\end{proof}

It is interesting to ask the following question.

\begin{ques}\label{ques-4.12}
{\rm In general, are all the conditions in Proposition \ref{prop-4.11}(1)(2) equivalent?}
\end{ques}

We claim that the answer to this question is positive provided that $R$ is either 
a left and right coherent ring or a commutative ring. The reasons are as follows: 
(i) If $R$ is a left and right coherent ring, then $\mathcal{L}(R)=\mathcal{F}(R)$ and 
$\mathcal{AC}(R)=\mathcal{FI}(R)$. So $\mathcal{L}(R)$-$\pd^{\leq n}=\fd^{\leq n}(R)$
and $\mathcal{AC}(R)$-$\id^{\leq n}=\FPid^{\leq n}(R)$, and hence the equivalence $(2.4)\Longleftrightarrow (3.4)$
in Corollary \ref{cor-4.7} implies the equivalence $(1.4)\Longleftrightarrow (2.4)$
in Proposition \ref{prop-4.11}. (ii) Let $R$ be a commutative ring. Suppose that
$\mathcal{L}(R)$-$\pd^{\leq n}=\id^{\leq n}(R)=\pd^{\leq n}(R)$ holds true. 
If $M\in\mathcal{AC}(R)$-$\id^{\leq n}$, then $M^+\in\mathcal{L}(R)$-$\pd^{\leq n}=\pd^{\leq n}(R)$
by Lemma \ref{lem-4.10}(1). It follows from \cite[Theorem 2.1]{F} that $M\in\id^{\leq n}(R)$. 
So $\mathcal{AC}(R)$-$\id^{\leq n}\subseteq\id^{\leq n}(R)$, and hence
$\mathcal{AC}(R)$-$\id^{\leq n}=\id^{\leq n}(R)=\pd^{\leq n}(R)$.
Conversely, suppose that $\mathcal{AC}(R)$-$\id^{\leq n}=\id^{\leq n}(R)=\pd^{\leq n}(R)$ holds true. 
If $M\in\mathcal{L}(R)$-$\pd^{\leq n}$, then $M^+\in\mathcal{AC}(R)$-$\id^{\leq n}=\pd^{\leq n}(R)$
by Lemma \ref{lem-4.10}(2). It again follows from \cite[Theorem 2.1]{F} that 
$M\in\id^{\leq n}(R)=\pd^{\leq n}(R)$. So $\mathcal{L}(R)$-$\pd^{\leq n}\subseteq\pd^{\leq n}(R)$, 
and hence $\mathcal{L}(R)$-$\id^{\leq n}=\id^{\leq n}(R)=\pd^{\leq n}(R)$. This proves 
the equivalence $(1.4)\Longleftrightarrow (2.4)$ in Proposition \ref{prop-4.11}.

 
The following example illustrates that the assumption ``$R$ is a left Noetherian and right coherent ring"
in Corollary \ref{cor-4.7}(ii) and the last assertion in Proposition \ref{prop-4.11} can not be weakened to that 
``$R$ is a left and right coherent ring", 
and hence the assumption ``$R$ is a left Noetherian ring and $S$ is a right coherent ring"
in Theorem \ref{thm-4.6}(iii) can not be weakened to that ``$R$ is a left coherent ring and $S$ is a right coherent ring".

\begin{exa} \label{exa-4.13}
{\rm If $R$ is a left and right coherent ring, then in Proposition \ref{prop-4.11}, the condition (3) is equivalent to
that $\id_RR\leq n$ and $\FPid_{R^{op}}R\leq n$, and the condition (4) is equivalent to that 
$\FPid_RR\leq n$ and $\FPid_{R^{op}}R\leq n$.

Let $D$ be a division ring and let $V$ be an infinite dimensional right $D$-vector space. Set $R:=\End(V_D)$.
Then $R$ is a von Neumann regular ring (that is, any left $R$-module is flat)
by \cite[Example 3.74A]{L}, and hence $R$ is a left and right coherent ring 
and $\FPid_RR=\FPid_{R^{op}}R=0$ by \cite[Example 4.46(b)]{L} and \cite[Theorem 5]{M}.  
Moreover, we have $\id_RR\neq 0$ and $\id_{R^{op}}R=0$ by \cite[Example 3.74B]{L}. 
In this case, we have $(4)\nRightarrow (3)$ in Proposition \ref{prop-4.11}.

Now set $R':=R^{op}$. Then $R'$ is also a von Neumann regular ring, and hence $R'$ is a left and right coherent ring
and $\FPid_{R'}R'=\FPid_{{R'}^{op}}R'=0$. By \cite[Theorem 5]{M}, we have $\Mod R'=\mathcal{FI}(R')$. 
In addition, we have $\id_{R'}R'=0$ and $\id_{{R'}^{op}}R'\neq 0$
by the above argument, which implies that $R'$ is not semisimple, and thus 
$\mathcal{P}(R')\subsetneqq \Mod R'=\mathcal{F}(R')$ and $\mathcal{I}(R')\subsetneqq \Mod R'=\mathcal{FI}(R')$.
In this case, we have $(3)\nRightarrow (1.4)$ and $(3)\nRightarrow (2.4)$ in Proposition \ref{prop-4.11}.}
\end{exa}

\subsection{Auslander and Bass classes}

%

We first establish the relation among the projective dimension of $C$ and certain relative homological dimensions.

\begin{lem} \label{lem-4.14}
It holds that
\begin{enumerate}
\item[{\rm (1)}] $\pd_RC=\pd\mathcal{P}_C(R)=\mathcal{P}_C(S^{op})$-$\id \mathcal{P}(S^{op})
=\mathcal{P}_C(S^{op})$-$\id S=\mathcal{F}_C(S^{op})$-$\id S=\mathcal{B}_C(S^{op})$-$\id S$

$\ \ \ \ \ \ \ \ =\id \mathcal{I}_C(R^{op})=\mathcal{I}_C(S)\text{-}\pd \mathcal{I}(S)
=\mathcal{I}_C(S)$-$\pd{S}^+=\mathcal{A}_C(S)$-$\pd{S}^+$.
\item[{\rm (2)}] $\pd_{S^{op}}C=\pd\mathcal{P}_C(S^{op})=\mathcal{P}_C(R)$-$\id \mathcal{P}(R)
=\mathcal{P}_C(R)$-$\id R=\mathcal{F}_C(R)$-$\id R=\mathcal{B}_C(R)$-$\id R$

$\ \ \ \ \ \ \ \ \ \ =\id \mathcal{I}_C(S)=\mathcal{I}_C(R^{op})\text{-}\pd \mathcal{I}(R^{op})
=\mathcal{I}_C(R^{op})$-$\pd{R}^+=\mathcal{A}_C(R^{op})$-$\pd{R}^+$.
\end{enumerate}
\end{lem}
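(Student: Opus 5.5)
By symmetry it suffices to prove (1); statement (2) then follows by interchanging the roles of $_RC$ and $C_S$, which is legitimate because $C_S$ is Wakamatsu tilting with $R=\End(C_S)$. I would prove (1) as a chain of equalities, each step coming from either a dimension-shifting argument or a cited duality.

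\emph{Projective side.} First, $\pd_RC=\pd\mathcal{P}_C(R)$. Since $\mathcal{P}_C(R)=\Add_RC$ and projective dimension is compatible with direct sums and summands, this is immediate.

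Next, $\pd\mathcal{P}_C(R)=\mathcal{P}_C(S^{op})$-$\id\mathcal{P}(S^{op})=\mathcal{P}_C(S^{op})$-$\id S$. I expect this to be the analogue of Lemma \ref{lem-2.7} (and \cite[Theorem 3.3]{TH4}) with projective dimension in place of injective dimension, applied on the $S^{op}$ side. The reduction from $\mathcal{P}(S^{op})$ to $S$ should go through because $\mathcal{P}_C(S^{op})$-$\id^{\leq n}$ is closed under direct sums and summands. The same results, together with $\mathcal{F}(S^{op})\subseteq\mathcal{A}_C(S^{op})$, should give equality with $\mathcal{F}_C(S^{op})$-$\id S$.

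Then $\mathcal{B}_C(S^{op})$-$\id S$. One inequality is free: $\mathcal{P}_C\subseteq\mathcal{F}_C\subseteq\mathcal{B}_C$, so the $\mathcal{B}_C$-injective dimension is at most the $\mathcal{F}_C$-injective one. For the reverse inequality I would use $S\in\mathcal{A}_C(S^{op})$ and Foxby equivalence (\cite[Theorem 6.1]{HW}, \cite[Theorem 3.5]{TH2}). These identify $\mathcal{B}_C(S^{op})$-$\id S$ with $\id_{R^{op}}$-type data of $\mathcal{I}_C$ objects, namely with $\id\mathcal{I}_C(R^{op})$.

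\emph{Injective side.} The equality $\id\mathcal{I}_C(R^{op})=\pd_RC$ is the standard Ext–Hom duality: $\Ext^i_{R^{op}}(X,I'_*)\cong\Ext^i_{S^{op}}(X\otimes_RC,I')$ for suitable $X$ with $\Tor$-vanishing, combined with the Lambek/character-module identity $\fd=\id$ of $(-)^+$ \cite[Theorem 2.1]{F}.

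For $\mathcal{I}_C(S)$-$\pd\mathcal{I}(S)=\mathcal{I}_C(S)$-$\pd S^+$, I would use that $S^+$ is an injective cogenerator. Every injective left $S$-module is then a summand of a product of copies of $S^+$, and $\mathcal{I}_C(S)$ is closed under products and summands.

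For $\mathcal{A}_C(S)$-$\pd S^+$, I would apply Proposition \ref{prop-4.1} with $\X=\mathcal{B}_C(S^{op})$ and $\Y=\mathcal{A}_C(S)$. The hypotheses are verified in the proof of Corollary \ref{cor-4.2}(2). This gives $\mathcal{A}_C(S)$-$\pd S^+=\mathcal{B}_C(S^{op})$-$\id S$, which links the injective side back to the projective side. Likewise, the character duality $(C\otimes P)^+\cong(P^+)_*$ should turn $\mathcal{P}_C(S^{op})$-coresolutions of $S$ into $\mathcal{I}_C(S)$-resolutions of $S^+$, giving $\mathcal{P}_C(S^{op})$-$\id S=\mathcal{I}_C(S)$-$\pd S^+$. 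The converse direction uses that $\mathcal{I}_C(S)$-$\pd^{\leq n}$ restricted to $\mathcal{A}_C(S)$ is detected by the Foxby functor $C\otimes_S-$ \cite[Theorem 3.5]{TH2}.

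\emph{Main obstacle.} The hard step is closing the loop from the Auslander/Bass-class dimensions back to $\pd_RC$, i.e.\ showing $\mathcal{B}_C(S^{op})$-$\id S\geq\pd_RC$. I would first try the Foxby equivalence formula $\mathcal{B}_C(S^{op})$-$\id S=\id_{R^{op}}\mathcal{I}_C$-type quantity. Failing that, I would compute directly: $\Ext^{n+1}_{S^{op}}$ against $\mathcal{P}_C(S^{op})$ corresponds under the Hom–tensor adjunction to $\Ext^{n+1}_R(C,-)$, which vanishes exactly when $\pd_RC\leq n$. The earlier \cite[Proposition 4.1]{TH3} treats $\pd_RC$ this way and would be cited for this step.
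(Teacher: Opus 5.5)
There is a genuine gap, and it sits exactly at the step you call the main obstacle. The proposal never connects $\pd_RC$ to the invariants of $S_S$ and $S^+$, and the tools you point to cannot make that connection.

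Lemma \ref{lem-2.7} and \cite[Theorem 3.5]{TH2} are Foxby-equivalence statements: they move dimensions between $\mathcal{A}_C(S)$ and $\mathcal{B}_C(R)$. There is no ``projective analogue'' that turns $\pd$ of left $R$-modules into a relative injective dimension of right $S$-modules.

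Foxby equivalence cannot be applied to $S_S$ either. There is no class $\mathcal{A}_C(S^{op})$ in this setting; the Auslander class on the right is $\mathcal{A}_C(R^{op})\subseteq\Mod R^{op}$. Moreover, $S_S$ need not lie in $\mathcal{B}_C(S^{op})$. Condition (B1) would require $\Ext^{\geq 1}_{S^{op}}(C,S)=0$, and this fails whenever $0<\pd_{S^{op}}C<\infty$, because $C_S$ has a degreewise finite projective resolution. Finally, \cite[Proposition 4.1]{TH3} only says $\pd_RC=\pd_{S^{op}}C$ when both are finite. That is not the inequality you need, and it presupposes a finiteness you do not have.

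What your ingredients do establish is the following:
\begin{itemize}
\item[(a)] $\pd_RC=\pd\mathcal{P}_C(R)$.
\item[(b)] $\mathcal{B}_C(S^{op})$-$\id S=\mathcal{A}_C(S)$-$\pd S^+$, via Proposition \ref{prop-4.1}, exactly as in the paper.
\item[(c)] $\mathcal{A}_C(S)$-$\pd S^+\leq\mathcal{I}_C(S)$-$\pd S^+\leq\mathcal{P}_C(S^{op})$-$\id S$, via $(P'\otimes_RC)^+\cong((P')^+)_*$.
\item[(d)] $\mathcal{B}_C(S^{op})$-$\id S\leq\mathcal{F}_C(S^{op})$-$\id S\leq\mathcal{P}_C(S^{op})$-$\id S$.
\item[(e)] $\mathcal{P}_C(S^{op})$-$\id S\leq\pd_RC$, obtained by applying $\Hom_R(-,C)$ to a finite resolution of ${}_RC$ by finitely generated projectives; this stays exact because $\Ext^{\geq 1}_R(C,C)=0$.
\end{itemize}

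The one inequality that would close the loop is $\pd_RC\leq\mathcal{B}_C(S^{op})$-$\id S$. This is the real content of the lemma, and it is left unproved. The paper obtains it from \cite[Lemma 4.3]{TH3}, which gives $\pd_RC=\mathcal{P}_C(S^{op})$-$\id S$, and from the symmetric version of \cite[Lemma 4.5]{TH3}, which gives $\mathcal{P}_C(S^{op})$-$\id S=\mathcal{F}_C(S^{op})$-$\id S=\mathcal{B}_C(S^{op})$-$\id S$. It uses \cite[Lemma 4.8]{TH3} for $\mathcal{I}_C(S)$-$\pd S^+=\mathcal{A}_C(S)$-$\pd S^+$.

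Two further steps need repair.

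First, passing from $S_S$ to all of $\mathcal{P}(S^{op})$, and from $S^+$ to all of $\mathcal{I}(S)$, requires $\mathcal{P}_C(S^{op})$-$\id^{\leq n}$ and $\mathcal{I}_C(S)$-$\pd^{\leq n}$ to be closed under direct summands. This does not follow from $\mathcal{P}_C(S^{op})$ and $\mathcal{I}_C(S)$ being closed under sums, products and summands. These classes are not coresolving (respectively, resolving), so Lemma \ref{lem-2.1} does not apply. The paper invokes the symmetric version of \cite[Lemma 4.6]{TH2} and \cite[Lemma 4.7]{TH3} here.

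Second, for $\id\mathcal{I}_C(R^{op})=\pd_RC$, the identity you want is $\Ext^i_{R^{op}}(X,I'_*)\cong\Hom_{S^{op}}(\Tor^R_i(X,C),I')$. It holds for every $X\in\Mod R^{op}$ and every injective $I'$, with no Tor-vanishing hypothesis, and it yields $\id\mathcal{I}_C(R^{op})=\fd_RC$. You then still need $\fd_RC=\pd_RC$, which holds because ${}_RC$ has a degreewise finite projective resolution. The paper handles this step via $\mathcal{I}_C(R^{op})=\Prod({}_RC)^+$ and \cite[Theorem 2.1]{F}.
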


\begin{proof}
(1) Since $\mathcal{P}_C(R)=\Add_RC$ by \cite[Proposition 2.4(a)]{LHX}, we have
$$\pd_RC=\pd \mathcal{P}_C(R).$$ By \cite[Lemma 4.3]{TH3}, we have
$$\pd_{R}C=\mathcal{P}_C(S^{op})\text{-}\id S.$$ Since
$\mathcal{P}_C(S^{op})\subseteq\mathcal{F}_C(S^{op})\subseteq\mathcal{B}_C(S^{op})$
by the symmetric version of \cite[Corollary 6.1]{HW}, we have
$$\mathcal{P}_C(S^{op})\text{-}\id S=\mathcal{F}_C(S^{op})\text{-}\id S=\mathcal{B}_C(S^{op})\text{-}\id S$$
by the symmetric version of \cite[Lemma 4.5]{TH3}.

Note that $\mathcal{P}_C(S^{op})=\Add C_S$ by the symmetric version of \cite[Proposition 2.4(a)]{LHX}.
If $\mathcal{P}_C(S^{op})\text{-}\id S=n<\infty$, then it is easy to see that
$\mathcal{P}_C(S^{op})\text{-}\id F\leq n$ for any free right $S$-module $F$.
It follows from the symmetric version of \cite[Lemma 4.6]{TH2}
that $\mathcal{P}_C(S^{op})$-$\id P\leq n$ for any projective right $S$-module $P$. This shows
$\mathcal{P}_C(S^{op})$-$\id \mathcal{P}(S^{op})\leq\mathcal{P}_C(S^{op})$-$\id S$, and hence
$$\mathcal{P}_C(S^{op})\text{-}\id \mathcal{P}(S^{op})=\mathcal{P}_C(S^{op})\text{-}\id S.$$

Since $\mathcal{I}_C(R^{op})=\Prod(_{R}C)^+$ 
by the symmetric version of \cite[Proposition 2.4(b)]{LHX}, we have $\id_{R^{op}}(_RC)^+=\id \mathcal{I}_C(R^{op})$.
On the other hand, by \cite[Theorem 2.1]{F}, we have
$\pd_{R}C=\fd_{R}C=\id_{R^{op}}(_RC)^+$, and hence
$$\pd_{R}C=\id \mathcal{I}_C(R^{op}).$$

Suppose $\mathcal{I}_C(S)$-$\pd {S}^{+}=n<\infty$. Since $\mathcal{I}_C(S)$ is closed under direct products by
\cite[Proposition 5.1(c)]{HW}, we have $\mathcal{I}_C(S)$-$\pd ({S}^{+})^J\leq n$ for any set $J$.
For any injective left $S$-module $E$, since $E$ is a direct summand of $({S}^{+})^J$ for some set $J$,
we have $\mathcal{I}_C(S)$-$\pd E\leq n$ by \cite[Lemma 4.7]{TH3}. This shows
$\mathcal{I}_C(S)$-$\pd \mathcal{I}(S)\leq\mathcal{I}_C(S)$-$\pd {S}^{+}$, and hence
$$\mathcal{I}_CS)\text{-}\pd \mathcal{I}(S)=\mathcal{I}_C(S)\text{-}\pd {S}^{+}.$$

By \cite[Lemma 4.8]{TH3} and the symmetric versions of \cite[Proposition 3.2]{Hu4} and Proposition \ref{prop-4.1}(1), we have
$$\mathcal{I}_C(S)\text{-}\pd{S}^+=\mathcal{A}_C(S)\text{-}\pd{S}^+=\mathcal{B}_C(S^{op})\text{-}\id S.$$

(2) It is the symmetric version of (1).
\end{proof}


By \cite[Theorems 1 and 6.1]{HW} (cf. \cite[Theorem 3.11(1)]{TH3} and \cite[Theorem 3.9]{TH1}), we have
$$\mathcal{A}_C(S)={^{\bot}\mathcal{I}_C(S)}\cap\widetilde{\cores\mathcal{I}_C(S)}=r\mathcal{G}(\mathcal{I}_C(S)),$$
$$\mathcal{A}_C(R^{op})={^{\bot}\mathcal{I}_C(R^{op})}\cap\widetilde{\cores\mathcal{I}_C(R^{op})}=r\mathcal{G}(\mathcal{I}_C(R^{op})),$$
$$\mathcal{B}_C(R)={\mathcal{P}_C(R)^{\bot}}\cap\widetilde{\res\mathcal{P}_C(R)}=l\mathcal{G}(\mathcal{P}_C(R)),$$
$$\mathcal{B}_C(S^{op})={\mathcal{P}_C(S^{op})^{\bot}}\cap\widetilde{\res\mathcal{P}_C(S^{op})}=l\mathcal{G}(\mathcal{P}_C(S^{op})).$$
The following result greatly improves \cite[Theorems 4.2 and 4.10]{TH3}. 

\begin{thm}\label{thm-4.15}
For any $n\ge 0$, the following statements are equivalent.
\begin{enumerate}
\item[{\rm (1)}] $\pd_RC=\pd_{S^{op}}C\leq n$.
\item[{\rm (a-1)}] $\Mod S$ is right $n$-$\mathcal{I}_C(S)$-Gorenstein 
$($that is, $\mathcal{I}_C(S)$-$\pd \mathcal{I}(S)\leq n$ and $\id \mathcal{I}_C(S)\leq n)$.
\item[{\rm (a-2)}] $\mathcal{A}_C(S)$-$\pd N\leq n$ for any $N\in\Mod S$.
\item[{\rm (a-3)}] $\mathcal{A}_C(S)$-$\pd \mathcal{I}(S)\leq n$ and $\id \mathcal{I}_C(S)\leq n$.
\item[{\rm (a-4)}] $\mathcal{A}_C(S)$-$\pd {S}^{+}\leq n$ and $\id \mathcal{I}_C(S)\leq n$.
\item[{\rm (a-5)}] $\mathcal{I}_C(S)$-$\pd {S}^{+}\leq n$ and $\id \mathcal{I}_C(S)\leq n$.
\item[{\rm (a-6)}] $\mathcal{I}_C(S)$-$\pd^{\leq n}\subseteq\id^{\leq n}(S)\subseteq\mathcal{A}_C(S)$-$\pd^{\leq n}$.
\item[{\rm (a-}$i)^{op}$] The $R^{op}$-version of {\rm (a-}$i)$ with $1\leq i\leq 6$.
\item[{\rm (b-1)}] $\Mod R$ is left $n$-$\mathcal{P}_C(R)$-Gorenstein 
$($that is, $\mathcal{P}_C(R)$-$\id \mathcal{P}(R)\leq n$ and $\pd \mathcal{P}_C(R)\leq n)$.
\item[{\rm (b-2)}] $\mathcal{B}_C(R)$-$\id M\leq n$ for any $M\in\Mod R$.
\item[{\rm (b-3)}] $\mathcal{B}_C(R)$-$\id \mathcal{P}(R)\leq n$ and $\pd \mathcal{P}_C(R)\leq n$.
\item[{\rm (b-4)}] $\mathcal{B}_C(R)$-$\id R\leq n$ and $\pd \mathcal{P}_C(R)\leq n$.
\item[{\rm (b-5)}] $\mathcal{P}_C(R)$-$\id R\leq n$ and $\pd \mathcal{P}_C(R)\leq n$.
\item[{\rm (b-6)}] $\mathcal{P}_C(R)$-$\id^{\leq n}\subseteq\pd^{\leq n}(R)\subseteq\mathcal{B}_C(R)$-$\id^{\leq n}$.
\item[{\rm (b-}$i)^{op}$] The $S^{op}$-version of {\rm (b-}$i)$ with $1\leq i\leq 6$.
\end{enumerate}
\end{thm}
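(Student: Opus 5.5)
Everything will be reduced to the dictionary in Lemma \ref{lem-4.14} together with Theorems \ref{thm-3.11} and \ref{thm-3.14}. By Lemma \ref{lem-4.14}(1),(2), the conditions (a-1), (a-4), (a-5) are all the single statement $\pd_RC\leq n$ and $\pd_{S^{op}}C\leq n$. Indeed, $\mathcal{I}_C(S)$-$\pd\mathcal{I}(S)=\mathcal{I}_C(S)$-$\pd S^+=\mathcal{A}_C(S)$-$\pd S^+=\pd_RC$, while $\id\mathcal{I}_C(S)=\pd_{S^{op}}C$. Likewise (b-1), (b-4), (b-5) say the same thing, since $\mathcal{P}_C(R)$-$\id\mathcal{P}(R)=\mathcal{P}_C(R)$-$\id R=\mathcal{B}_C(R)$-$\id R=\pd_{S^{op}}C$ and $\pd\mathcal{P}_C(R)=\pd_RC$. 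Each of these is therefore equivalent to (1), which we read as ``both dimensions are $\leq n$''; the equality is then automatic by \cite[Proposition 4.1]{TH3}. The op-versions are obtained by swapping the roles of $R$ and $S$ in Lemma \ref{lem-4.14}, which again produces the same pair of inequalities, so (1) is symmetric.

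Next, I apply Theorem \ref{thm-3.11} to $\A=\Mod S$ with $\C=\D=\mathcal{I}_C(S)$. The hypotheses hold: $\mathcal{I}_C(S)$ is self-orthogonal \cite[Lemma 2.7]{HS} and closed under direct summands. Moreover, $r\mathcal{G}(\mathcal{I}_C(S))=\mathcal{A}_C(S)$ contains $\mathcal{P}(S)$, since flat modules lie in the Auslander class \cite[Lemma 4.1]{HW}. The Auslander class is resolving and closed under summands \cite[Theorem 6.2]{HW}. The remaining hypothesis is that it be precovering: Foxby classes are known to be covering, via the duality pair of \cite{Hu4} (Holm--White). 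With these hypotheses, part (ii) of Theorem \ref{thm-3.11} gives (a-1)$\Leftrightarrow$(a-2)$\Leftrightarrow$(a-3)$\Leftrightarrow$(a-6), where (a-6) is item (5) there with $\D$-$\pd^{\leq n}=\mathcal{I}_C(S)$-$\pd^{\leq n}$. Dually, Theorem \ref{thm-3.14} applied to $\Mod R$ with $\C=\D=\mathcal{P}_C(R)$ gives (b-1)$\Leftrightarrow$(b-2)$\Leftrightarrow$(b-3)$\Leftrightarrow$(b-6). Here $\mathcal{P}_C(R)$ is self-orthogonal and $l\mathcal{G}(\mathcal{P}_C(R))=\mathcal{B}_C(R)$ contains $\mathcal{I}(R)$. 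The Bass class is coresolving and closed under summands, and it is preenveloping.

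The op-versions follow by the same argument applied to the semidualizing bimodule $C$ viewed from the other side, that is, to $\Mod R^{op}$ and $\Mod S^{op}$ with $\mathcal{A}_C(R^{op})$ and $\mathcal{B}_C(S^{op})$.

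The main obstacle is the (pre)covering/(pre)enveloping hypothesis needed for items (a-6) and (b-6). If a citation is unavailable, I would fall back on part (i) of the theorems only for (1)--(3), and prove (a-6) directly. For the first inclusion in (a-6), Proposition \ref{prop-3.5} applies because $\mathcal{I}(S)$ satisfies condition (a) of Lemma \ref{lem-3.2}. For the second inclusion, I would use Proposition \ref{prop-3.4} with $\U=\mathcal{A}_C(S)$. It satisfies condition (b) of Lemma \ref{lem-3.1} once precovering is known. Alternatively, I would argue via character modules: Proposition \ref{prop-4.1} together with Corollary \ref{cor-4.2}(2) transfers $\mathcal{A}_C(S)$-projective dimensions to $\mathcal{B}_C(S^{op})$-injective dimensions, for which the envelope side is easier. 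I would also double-check that $\mathcal{A}_C(S)$-$\pd\mathcal{I}(S)\leq n$ implies the bound for $S^+$ (trivial) and conversely (via Lemma \ref{lem-4.14}), so that (a-3) and (a-4) match.
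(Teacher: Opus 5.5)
Your proposal is correct and follows essentially the paper's route. You apply Theorems \ref{thm-3.11} and \ref{thm-3.14} with $\C=\D=\mathcal{I}_C(S)$ and $\C=\D=\mathcal{P}_C(R)$, using $r\mathcal{G}(\mathcal{I}_C(S))=\mathcal{A}_C(S)$, $l\mathcal{G}(\mathcal{P}_C(R))=\mathcal{B}_C(R)$ and the precovering/preenveloping property from \cite[Theorem 3.3]{Hu4}, which the paper also cites, so your fallback is unnecessary; you then glue these chains to (1) through Lemma \ref{lem-4.14}. The only differences are cosmetic: you attach (1) to the chains at (a-1) and (b-1) rather than at (a-3) and (b-3), and you handle the op-versions by noting that they give the same pair of inequalities, whereas the paper uses (a-4)$^{op}\Leftrightarrow$(b-4) together with symmetry.
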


\begin{proof}
By \cite[Theorem 3.3]{Hu4} and \cite[Theorem 6.2]{HW}, we have that both $\mathcal{A}_C(S)$ and $\mathcal{A}_C(R^{op})$
are precovering and resolving, and that both $\mathcal{B}_C(R)$ and $\mathcal{B}_C(S^{op})$ are preenveloping and coresolving.
Putting $\C=\D=\mathcal{I}_C(S)$ in Theorem \ref{thm-3.11}, we get 
(a-1)$\Longleftrightarrow$(a-2)$\Longleftrightarrow$(a-3)$\Longleftrightarrow$(a-6).
Putting $\C=\D=\mathcal{P}_C(R)$ in Theorem \ref{thm-3.14}, we get
(b-1)$\Longleftrightarrow$(b-2)$\Longleftrightarrow$(b-3)$\Longleftrightarrow$(b-6).
By Lemma \ref{lem-4.14}, we have
(1)$\Longleftrightarrow$(a-3)$\Longleftrightarrow$(a-4)$\Longleftrightarrow$(a-5) and
(1)$\Longleftrightarrow$(b-3)$\Longleftrightarrow$(b-4)$\Longleftrightarrow$(b-5), and
(a-$4)^{op}\Longleftrightarrow$(b-$4)$.
By symmetry, the proof is finished.
\end{proof}

As an immediate consequence, we obtain the following result.

\begin{cor}\label{cor-4.16}
It holds that
\begin{align*}
& \sup\{\mathcal{A}_C(S)\text{-}\pd N\mid N\in\Mod S\}
=\sup\{\mathcal{A}_C(R^{op})\text{-}\pd N'\mid N'\in\Mod R^{op}\}\\
=&\sup\{\mathcal{B}_C(R)\text{-}\id M\mid M\in\Mod R\}
=\sup\{\mathcal{B}_C(S^{op})\text{-}\id M'\mid M'\in\Mod S^{op}\}.
\end{align*}
\end{cor}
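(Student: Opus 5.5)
Corollary \ref{cor-4.16} follows directly from Theorem \ref{thm-4.15}, applied to every $n\ge 0$ at once. Write
$$a:=\sup\{\mathcal{A}_C(S)\text{-}\pd N\mid N\in\Mod S\},\qquad a':=\sup\{\mathcal{A}_C(R^{op})\text{-}\pd N'\mid N'\in\Mod R^{op}\},$$
$$b:=\sup\{\mathcal{B}_C(R)\text{-}\id M\mid M\in\Mod R\},\qquad b':=\sup\{\mathcal{B}_C(S^{op})\text{-}\id M'\mid M'\in\Mod S^{op}\},$$
each an element of $\{0,1,2,\dots\}\cup\{\infty\}$. For a fixed integer $n\ge 0$, the inequality $a\le n$ is precisely condition (a-2) of Theorem \ref{thm-4.15}. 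Likewise $a'\le n$ is (a-2)$^{op}$, $b\le n$ is (b-2), and $b'\le n$ is (b-2)$^{op}$.

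Theorem \ref{thm-4.15} says all of these are equivalent to (1), namely $\pd_RC=\pd_{S^{op}}C\leq n$. Hence, for every integer $n\ge 0$,
$$a\le n\iff a'\le n\iff b\le n\iff b'\le n .$$

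Two extended values $x,y\in\{0,1,2,\dots\}\cup\{\infty\}$ that satisfy $x\le n\iff y\le n$ for all finite $n$ must be equal. If one of them is finite, take $n$ to be that value; this gives $\le$ in one direction, and symmetry gives the other. If neither is finite, both are $\infty$. So $a=a'=b=b'$.

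The only point to check is that the $op$-versions in Theorem \ref{thm-4.15} really are the conditions on $\Mod R^{op}$ and $\Mod S^{op}$. This holds because $C_S$ is Wakamatsu tilting with $R=\End(C_S)$, so the roles of $R$ and $S$ interchange, and the theorem lists (a-$i)^{op}$ and (b-$i)^{op}$ explicitly. No further computation is needed. As a by-product, all four suprema equal $\pd_RC$ whenever $\pd_RC=\pd_{S^{op}}C$.
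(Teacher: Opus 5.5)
Your proposal is correct and is exactly the argument the paper intends: the paper states this corollary as an immediate consequence of Theorem \ref{thm-4.15} without further proof. You have spelled out that, for each $n\geq 0$, the bounds $a\le n$, $a'\le n$, $b\le n$, $b'\le n$ are precisely (a-2), (a-2)$^{op}$, (b-2), (b-2)$^{op}$, all equivalent to (1), and that this forces the four suprema in $\{0,1,2,\dots\}\cup\{\infty\}$ to coincide.
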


We give the following remark.

\begin{rem}\label{rem-4.17}
{\rm Compare Theorems \ref{thm-4.6}(i) and \ref{thm-4.15}.
\begin{enumerate}
\item[{\rm (1)}]
It is natural to ask whether the conditions in Theorem \ref{thm-4.6}(i) are left-right symmetric.
In general, the answer is negative. Otherwise, if the conditions in Theorem \ref{thm-4.6}(i) are left-right symmetric,
then so are those in Corollary \ref{cor-4.7}(i). It yields
$$\{\Gpd_RM\mid M\in\Mod R\}=\{\Gpd_{R^{op}}N\mid N\in\Mod R^{op}\}.$$
By \cite[Proposition 2.27]{H}, if $\gldim R<\infty$ and $\gldim R^{op}<\infty$, where 
$\gldim R$ and $\gldim R^{op}$ are the left and right global dimensions of $R$, respectively,
then $$\gldim R=\{\Gpd_RM\mid M\in\Mod R\}\ \text{and}\ \gldim R^{op}=\{\Gpd_{R^{op}}N\mid N\in\Mod R^{op}\}.$$
On the other hand, for any $0\leq m\neq n<\infty$, there exists a ring $R$ such that 
$$\gldim R=m\ \text{and}\ \gldim R^{op}=n$$
by \cite[p.439, Corollary]{J}. In this case, we have 
$$\{\Gpd_RM\mid M\in\Mod R\}=m\neq n=\{\Gpd_{R^{op}}N\mid N\in\Mod R^{op}\},$$
which is a contradiction.
\item[{\rm (2)}]
For any $n\geq 0$, it is also natural to consider the following conditions:
\begin{enumerate}
\item[{\rm (i)}] $\Mod R$ is left $n$-$\mathcal{P}_C(R)$-Gorenstein 
$($that is, $\mathcal{P}_C(R)$-$\id \mathcal{P}(R)\leq n$ and $\pd \mathcal{P}_C(R)\leq n)$.
\item[{\rm (ii)}] $\Mod S$ is right $n$-$\mathcal{I}_C(S)$-Gorenstein 
$($that is, $\mathcal{I}_C(S)$-$\pd \mathcal{I}(S)\leq n$ and $\id \mathcal{I}_C(S)\leq n)$.
\item[{\rm (iii)}] $\Mod R$ is left $n$-($\mathcal{P}_C(R),\mathcal{F}_C(R))$-Gorenstein 
$($that is, $\mathcal{P}_C(R)$-$\id \mathcal{P}(R)\leq n$ and $\pd \mathcal{F}_C(R)$ $\leq n)$.
\item[{\rm (iv)}] $\Mod S$ is right $n$-($\mathcal{I}_C(S),\mathcal{FI}_C(S))$-Gorenstein 
$($that is, $\mathcal{I}_C(S)$-$\pd \mathcal{I}(S)\leq n$ and $\id \mathcal{FI}_C(S)$ $\leq n)$.
\end{enumerate}
By Theorem \ref{thm-4.15}, we have (i)$\Longleftrightarrow$(ii). It is trivial that 
(iii)$\Longrightarrow$(i) and (iv)$\Longrightarrow$(ii). Putting $_RC_S={_RR_R}$ and $n=0$, we have that 
the assertions (i) and (ii) hold true. In this case, the assertion (iii) holds true 
$\Longleftrightarrow$ $\pd\mathcal{F}(R)=0$ $\Longleftrightarrow$ $R$ is a left perfect ring 
by \cite[Theorem 5.3.2]{EJ}; and the assertion (iv) holds true $\Longleftrightarrow$ 
$\id\mathcal{FI}(R)=0$ $\Longleftrightarrow$ $R$ is a left Noetherian ring by \cite[Theorem 3]{M}. 
Thus, in general, we have
$${\rm (i)}+{\rm (ii)}\nRightarrow{\rm (iii)},\  {\rm (i)}+{\rm (ii)}\nRightarrow{\rm (iv)},\ 
{\rm (iii)}\nRightarrow{\rm (iv)}\ \ \text{and}\ \ {\rm (iv)}\nRightarrow{\rm (iii)}.$$
\end{enumerate}}
\end{rem}

In the following, we apply Theorem \ref{thm-4.15} to coherent semilocal rings.
We need the following result.

\begin{prop}\label{prop-4.18}
It holds that
\begin{enumerate}
\item[{\rm (1)}] Let $I$ be an ideal of $R$ such that $R/I$ is a semisimple ring. Then
$$\mathcal{B}_C(R)\text{-}\id R/I=\mathcal{A}_C(R^{op})\text{-}\pd R/I$$
provided that both of them are finite.
\item[{\rm (2)}] Let $I'$ be an ideal of $S$ such that $S/I'$ is a semisimple ring. Then
$$\mathcal{B}_C(S^{op})\text{-}\id S/I'=\mathcal{A}_C(S)\text{-}\pd S/I'$$
provided that both of them are finite.
\end{enumerate}
\end{prop}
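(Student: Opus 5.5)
The plan is to reduce the statement to Proposition \ref{prop-4.1}, applied with $\X=\mathcal{B}_C(R)$ and $\Y=\mathcal{A}_C(R^{op})$, and then to compare the right $R$-modules $(R/I)_R$ and $(R/I)^+$ using the semisimplicity of $R/I$.

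\emph{Step 1: a character-module identity.} As in the proof of Corollary \ref{cor-4.2}(2), the pair $\X=\mathcal{B}_C(R)$, $\Y=\mathcal{A}_C(R^{op})$ has the following properties:
\begin{enumerate}
\item[{\rm (a)}] $\mathcal{B}_C(R)$ is coresolving and closed under direct summands, and $\mathcal{A}_C(R^{op})$ is resolving and closed under direct summands;
\item[{\rm (b)}] $B\in\mathcal{B}_C(R)$ if and only if $B^+\in\mathcal{A}_C(R^{op})$, and $A\in\mathcal{A}_C(R^{op})$ if and only if $A^+\in\mathcal{B}_C(R)$ (\cite[Proposition 3.2]{Hu4}).
\end{enumerate}
Proposition \ref{prop-4.1}(1) then gives
$$\mathcal{B}_C(R)\text{-}\id {_R(R/I)}=\mathcal{A}_C(R^{op})\text{-}\pd (R/I)^+,$$
where $(R/I)^+$ is regarded as a right $R$-module. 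So it remains to show that $\mathcal{A}_C(R^{op})\text{-}\pd (R/I)^+=\mathcal{A}_C(R^{op})\text{-}\pd (R/I)_R$.

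\emph{Step 2: the two modules have the same additive closure.} Since $I$ annihilates $(R/I)^+$, it is a right $R/I$-module. Because $R/I$ is semisimple, $(R/I)^+$ is a semisimple right $R/I$-module. It is also a cogenerator of $\Mod (R/I)^{op}$, so every simple right $R/I$-module occurs in it as a direct summand. The module $(R/I)_R$ is likewise a finite direct sum of simple right $R/I$-modules, and every simple right $R/I$-module occurs in it. Hence, as right $R$-modules,
$$\Add (R/I)_R=\Add (R/I)^+.$$

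\emph{Step 3: the relative projective dimension is constant on an $\Add$-class.} I will show that $\mathcal{A}_C(R^{op})\text{-}\pd$ takes the same value on $M$ and on every nonzero module in $\Add M$.
\begin{enumerate}
\item[{\rm (i)}] Direct sums do not raise the dimension. Suppose $\mathcal{A}_C(R^{op})\text{-}\pd M\leq m$. Take a resolution of $M$ by modules in $\mathcal{A}_C(R^{op})$ of length exactly $m$, padding with zero terms if necessary. A direct sum of copies of this resolution is exact, because $\Mod R^{op}$ satisfies AB4. Its terms lie in $\mathcal{A}_C(R^{op})$, since the Auslander class is closed under direct sums: Tor and the evaluation map commute with sums, and $C_R$ has a degreewise finite resolution, so $\Ext$ out of it also commutes with sums.
\item[{\rm (ii)}] Direct summands do not raise the dimension. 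This follows from Lemma \ref{lem-2.1}(1), using that $\mathcal{A}_C(R^{op})$ is resolving and closed under direct summands.
\end{enumerate}
Applying (i) and (ii) in both directions to $(R/I)_R$ and $(R/I)^+$, which lie in each other's $\Add$-class by Step 2, gives equality of their $\mathcal{A}_C(R^{op})$-projective dimensions. Combined with Step 1, this proves (1), and in fact without needing the finiteness hypothesis.

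\emph{Part (2) and the main obstacle.} Assertion (2) is the symmetric version of (1). Apply Proposition \ref{prop-4.1}(2) with $\X=\mathcal{B}_C(S^{op})$ and $\Y=\mathcal{A}_C(S)$, and compare $_S(S/I')$ with $(S/I')^+$ in the same way. The main point requiring care is Step 3(i): the closure of $\mathcal{A}_C(R^{op})$ under arbitrary direct sums. If that closure turns out to be awkward to verify directly, I would instead invoke the fact that $\mathcal{A}_C(R^{op})$ is closed under direct sums from \cite{HW}.
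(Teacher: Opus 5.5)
Your proof is correct and takes a genuinely different route from the paper.

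\textbf{The paper's route.} It characterizes each dimension homologically. When the dimensions are finite, $\mathcal{B}_C(R)\text{-}\id R/I\leq n$ if and only if $\Ext_R^{\geq n+1}(C,R/I)=0$. Likewise, $\mathcal{A}_C(R^{op})\text{-}\pd R/I\leq n$ if and only if $\Tor^R_{\geq n+1}(R/I,C)=0$. These two vanishing conditions are then matched by an external lemma that uses the semisimplicity of $R/I$. The finiteness hypothesis is needed exactly because these cited characterizations hold only for finite dimensions.

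\textbf{Your route.} You use the character-module duality of Proposition \ref{prop-4.1} for the pair $(\mathcal{B}_C(R),\mathcal{A}_C(R^{op}))$, as the paper itself does for $S^+$ in Lemma \ref{lem-4.14}. You then move from $(R/I)^+$ to $(R/I)_R$ using three facts:
\begin{itemize}
\item $\Add (R/I)_R=\Add (R/I)^+$;
\item $\mathcal{A}_C(R^{op})$ is closed under direct sums, which your argument via the degreewise finite resolution of $C_S$ establishes correctly;
\item Lemma \ref{lem-2.1}(1).
\end{itemize}

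\textbf{What each approach buys.} Every step of yours holds in $\mathbb{N}\cup\{\infty\}$. So you get the stronger, unconditional equality; in particular, one side is finite if and only if the other is. This would slightly streamline Proposition \ref{prop-4.19} and Corollary \ref{cor-4.20}. The paper's route, in exchange, yields the Ext/Tor vanishing description. That description is what Proposition \ref{prop-4.19} actually uses to compare with $\pd_RC$.

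\textbf{One correction.} The opening sentence of your Step 3 overclaims. The relative dimension is not constant on all nonzero modules in $\Add M$. For instance, take $M=N\oplus N'$ with $N\in\mathcal{A}_C(R^{op})$ and $N'\notin\mathcal{A}_C(R^{op})$. Then $M$ has positive $\mathcal{A}_C(R^{op})$-projective dimension, while its summand $N$ has dimension $0$. What (i) and (ii) actually prove is the monotonicity
$$\mathcal{A}_C(R^{op})\text{-}\pd N\leq\mathcal{A}_C(R^{op})\text{-}\pd M \quad \text{for } N\in\Add M.$$
That is exactly what you then apply in both directions to $(R/I)_R$ and $(R/I)^+$, so the argument itself is unaffected.
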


\begin{proof}
(1) For any $n\geq 0$, we have
\begin{align*}
& \mathcal{B}_C(R)\text{-}\id R/I\leq n\\
\Longleftrightarrow & \Ext_R^{\geq n+1}(C,R/I)=0\ \ \ \text{(by \cite[Theorem 4.2]{TH2})}\\
\Longleftrightarrow & \Tor^R_{\geq n+1}(R/I,C)=0\ \ \ \ \text{(by \cite[Lemma 3.1]{SH})}\\
\Longleftrightarrow & \mathcal{A}_C(R^{op})\text{-}\pd R/I\leq n.\ \ \ \text{(by \cite[Theorem 4.4]{Hu4})}
\end{align*}

(2) It is the symmetric version of (1).
\end{proof}

Recall that a ring $R$ is called {\it semilocal} if $R/J(R)$ is a semisimple ring,
where $J(R)$ is the Jacobson radical of $R$.

\begin{prop}\label{prop-4.19}
It holds that
\begin{enumerate}
\item[{\rm (1)}] Let $R$ be a left coherent semilocal ring. Then
$$\pd_RC\leq\min\{\mathcal{B}_C(R)\text{-}\id R/J(R),\mathcal{A}_C(R^{op})\text{-}\pd R/J(R)\}.$$
Furthermore, if $\pd_{S^{op}}C<\infty$, then 
$$\pd_RC=\mathcal{B}_C(R)\text{-}\id R/J(R)=\mathcal{A}_C(R^{op})\text{-}\pd R/J(R).$$
\item[{\rm (2)}]
Let $S$ be a right coherent semilocal ring. Then
$$\pd_{S^{op}}C\leq\min\{\mathcal{B}_C(S^{op})\text{-}\id S/J(S),\mathcal{A}_C(S)\text{-}\pd S/J(S)\}.$$
Furthermore, if $\pd_RC<\infty$, then 
$$\pd_{S^{op}}C=\mathcal{B}_C(S^{op})\text{-}\id S/J(S)=\mathcal{A}_C(S)\text{-}\pd S/J(S).$$
\end{enumerate}
\end{prop}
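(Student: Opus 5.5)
I would prove (1); part (2) is its symmetric version with $S$ in place of $R$, using Proposition \ref{prop-4.18}(2).

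\emph{Step 1: reduce to a statement about the simple modules.} Since $R$ is semilocal, $R/J(R)$ is semisimple. Every simple left $R$-module is annihilated by $J(R)$, so every simple left $R$-module is a direct summand of $R/J(R)$. Because $R$ is left coherent, $C$ admits a degreewise finite projective resolution and is finitely presented. A Nakayama-type argument then applies to a minimal resolution, or more precisely to the truncated syzygies, which are finitely presented since $R$ is left coherent. It gives
$$\pd_RC=\fd_RC=\sup\{m\mid \Tor^R_m(R/J(R),C)\neq 0\}.$$
For the first equality: over a semilocal ring a finitely presented flat module is projective. For the second: the flat dimension of a finitely presented module over a left coherent semilocal ring is detected by $\Tor$ against $R/J(R)$. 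The reason is that a finitely presented syzygy $K$ with $\Tor_1(R/J,K)=0$ is flat, because minimality forces $K/JK$ to lift to a free summand.

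\emph{Step 2: compare with the relative dimensions.} Suppose $\mathcal{B}_C(R)$-$\id R/J(R)\leq n$. The chain in the proof of Proposition \ref{prop-4.18}, namely \cite[Theorem 4.2]{TH2}, \cite[Lemma 3.1]{SH} and \cite[Theorem 4.4]{Hu4}, shows this is equivalent to $\Ext_R^{\geq n+1}(C,R/J(R))=0$, to $\Tor^R_{\geq n+1}(R/J(R),C)=0$, and to $\mathcal{A}_C(R^{op})$-$\pd R/J(R)\leq n$. These equivalences hold without any finiteness hypothesis. By Step 1, $\Tor^R_{\geq n+1}(R/J(R),C)=0$ gives $\pd_RC\leq n$. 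Hence both relative dimensions are bounded below by $\pd_RC$, and in fact the two relative dimensions are equal. This gives the first inequality.

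\emph{Step 3: the reverse inequality when $\pd_{S^{op}}C<\infty$.} If $\pd_RC=\infty$ there is nothing to prove, so assume it is finite. By \cite[Proposition 4.1]{TH3}, $\pd_RC=\pd_{S^{op}}C=:n$. Theorem \ref{thm-4.15}(1)$\Rightarrow$(b-2) gives $\mathcal{B}_C(R)$-$\id M\leq n$ for every module $M$, in particular for $M=R/J(R)$. Similarly (a-2)$^{op}$ gives $\mathcal{A}_C(R^{op})$-$\pd R/J(R)\leq n$. Combined with Step 2, this yields the equalities.

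\emph{Main obstacle.} I expect the hardest step to be the Nakayama argument in Step 1, that $\Tor^R_{\geq n+1}(R/J(R),C)=0$ forces $\pd_RC\leq n$. I would take the $n$-th syzygy $K$, which is finitely presented by coherence, so $\Tor_1(R/J,K)=0$. I would then choose a projective cover $P\to K$ modulo $J$; semilocal rings admit projective covers of finitely generated modules $P/JP\cong K/JK$ lifted to a finitely generated projective $P$. The kernel $L$ is finitely generated and satisfies $L/JL=0$, so $L=0$ by Nakayama. Hence $K$ is projective.
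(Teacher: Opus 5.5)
Your Steps 2 and 3 follow the paper's proof closely. They use the same inputs: [TH2, Theorem 4.2], [SH, Lemma 3.1], [Hu4, Theorem 4.4], [TH3, Proposition 4.1] and Theorem~\ref{thm-4.15}. Your sandwich in Step 3, between the lower bound from Step 2 and the upper bound from Theorem~\ref{thm-4.15}, is a fine substitute for the paper's appeal to Proposition~\ref{prop-4.18}.

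\textbf{The gap is in Step 1.} This is where the real content lies, and the paper takes it from the Xu--Cheng theorem [XC, Theorem 2(1)]: over a left coherent semilocal ring, $\Ext_R^{\geq n+1}(C,R/J(R))=0$ forces $\pd_RC\leq n$. Your argument rests on ``semilocal rings admit projective covers of finitely generated modules'' and on minimal resolutions, and this is false. Finitely generated modules have projective covers precisely over semiperfect rings, i.e.\ semilocal rings over which idempotents lift modulo the radical. For instance, $R=\{a/b\in\mathbb{Q}\mid \gcd(b,6)=1\}$ is a semilocal domain with $R/J(R)\cong\mathbb{F}_2\times\mathbb{F}_3$. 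Its projectives are free, so the $R/J(R)$-module $\mathbb{F}_2$ is not of the form $P/J(R)P$ for any projective $P$. Hence for your syzygy $K$ there need not be a finitely generated projective $P$ with $P/JP\cong K/JK$. As written, Step 1 is proved only for semiperfect rings (e.g.\ artin algebras), not for the rings in the statement.

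\textbf{How to close it.} No projective covers are needed, because you have $\Tor$-vanishing in \emph{all} degrees $\geq n+1$, not just one.
Take $0\to K_{n+1}\to F_n\to K_n\to 0$ from a degreewise finite projective resolution of $C$, which exists by Definition~\ref{def-2.6}(1). Then $K_n$ and $K_{n+1}$ are finitely presented, and $\Tor_1^R(R/J,K_n\oplus K_{n+1})=\Tor^R_{n+1}(R/J,C)\oplus\Tor^R_{n+2}(R/J,C)=0$.
Reduce the sequence modulo $J$; it stays exact, and it splits because $R/J$ is semisimple. So $N/JN\cong F_n/JF_n$ for $N=K_n\oplus K_{n+1}$, and the top of $N$ \emph{is} liftable, namely by $F_n$.
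Lift such an isomorphism to $g\colon F_n\to N$. By Nakayama $g$ is onto. Its kernel $L$ is finitely generated because $N$ is finitely presented. Since $\Tor_1^R(R/J,N)=0$, we get $L/JL=0$, hence $L=0$.
Thus $K_n\oplus K_{n+1}\cong F_n$, so $K_n$ is projective and $\pd_RC\leq n$. This argument does not even use left coherence. With this replacement, or simply by citing [XC], the rest of your proof goes through.

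\textbf{A claim to drop.} In Step 2 you assert that the equivalences behind Proposition~\ref{prop-4.18} hold without finiteness hypotheses, and that the two relative dimensions always coincide. The paper asserts this only when both dimensions are finite, and your argument does not need it. You only use the trivial forward implications: modules in $\mathcal{B}_C(R)$ are $\Ext_R^{\geq 1}(C,-)$-acyclic, and modules in $\mathcal{A}_C(R^{op})$ are $\Tor^R_{\geq 1}(-,C)$-acyclic.
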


\begin{proof}
(1) 
If $\mathcal{B}_C(R)\text{-}\id R/J(R)=n<\infty$, then $\Ext_R^{\geq n+1}(C,R/J(R))=0$
by \cite[Theorem 4.2]{TH2}, and hence $\pd_RC\leq n$ by \cite[Theorem 2(1)]{XC}. This proves
$\pd_RC\leq\mathcal{B}_C(R)\text{-}\id R/J(R)$.

If $\mathcal{A}_C(R^{op})\text{-}\pd R/J(R)=n<\infty$, then 
$\Tor^R_{\geq n+1}(R/J(R),C)=0$ by \cite[Theorem 4.4]{Hu4}. It follows from \cite[Lemma 3.1]{SH} that
$\Ext_R^{\geq n+1}(C,R/J(R))=0$, and thus $\pd_RC\leq n$ by \cite[Theorem 2(1)]{XC} again. 
This proves $\pd_RC\leq\mathcal{A}_C(R^{op})\text{-}\pd R/J(R)$. 

Now suppose $\pd_{S^{op}}C<\infty$. Let $\pd_RC=n<\infty$. Then $\pd_RC=\pd_{S^{op}}C=n$
by \cite[Proposition 4.1]{TH3}. Thus $\mathcal{B}_C(R)\text{-}\id R/J(R)\leq n$ and 
$\mathcal{A}_C(R^{op})\text{-}\pd R/J(R)\leq n$ by Theorem \ref{thm-4.15}. It follows from 
Proposition \ref{prop-4.18}(1) that
$$\mathcal{A}_C(R^{op})\text{-}\pd R/J(R)=\mathcal{B}_C(R)\text{-}\id R/J(R)\leq n=\pd_RC.$$
The proof is finished.

(2) It is the symmetric version of (1).
\end{proof}

The Wakamatsu tilting conjecture states that if $R$ and $S$ are artin algebras, then $\pd_RC=\pd_{S^{op}}C$
(\cite{BR,MR}). The following result provides a necessary condition for the validity of this conjecture.

\begin{cor}\label{cor-4.20}
Let $R$ be a left coherent semilocal ring and $S$ a right coherent semilocal ring.
If $\pd_RC=\pd_{S^{op}}C$, then
$$\mathcal{B}_C(R)\text{-}\id R/J(R)=\mathcal{B}_C(S^{op})\text{-}\id S/J(S)=
\mathcal{A}_C(R^{op})\text{-}\pd R/J(R)=\mathcal{A}_C(S)\text{-}\pd S/J(S).\eqno{(4.6)}$$
\end{cor}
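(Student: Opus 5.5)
Set $n:=\pd_RC=\pd_{S^{op}}C$. The plan is to split into the case $n<\infty$ and the case $n=\infty$, and in each case to reduce every quantity in $(4.6)$ to $n$ via Proposition \ref{prop-4.19}.

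\emph{Case $n<\infty$.} Here both $\pd_RC$ and $\pd_{S^{op}}C$ are finite. Since $R$ is left coherent semilocal and $\pd_{S^{op}}C<\infty$, the second part of Proposition \ref{prop-4.19}(1) gives
$$\pd_RC=\mathcal{B}_C(R)\text{-}\id R/J(R)=\mathcal{A}_C(R^{op})\text{-}\pd R/J(R).$$
Since $S$ is right coherent semilocal and $\pd_RC<\infty$, Proposition \ref{prop-4.19}(2) gives
$$\pd_{S^{op}}C=\mathcal{B}_C(S^{op})\text{-}\id S/J(S)=\mathcal{A}_C(S)\text{-}\pd S/J(S).$$
Combining these with $\pd_RC=\pd_{S^{op}}C$, all four quantities in $(4.6)$ equal $n$.

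\emph{Case $n=\infty$.} By the first part of Proposition \ref{prop-4.19}(1),
$$\infty=\pd_RC\leq\min\{\mathcal{B}_C(R)\text{-}\id R/J(R),\ \mathcal{A}_C(R^{op})\text{-}\pd R/J(R)\},$$
so both of these are $\infty$. By Proposition \ref{prop-4.19}(2), $\infty=\pd_{S^{op}}C$ likewise bounds $\mathcal{B}_C(S^{op})\text{-}\id S/J(S)$ and $\mathcal{A}_C(S)\text{-}\pd S/J(S)$ from below, so these are also infinite. Hence $(4.6)$ holds with every term equal to $\infty$.

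No step is a real obstacle. The only care needed is to use the hypothesis $\pd_RC=\pd_{S^{op}}C$ in two ways: to supply the finiteness assumption on the opposite side that the equality parts of Proposition \ref{prop-4.19} require, and to handle the infinite case, where only the inequality parts of Proposition \ref{prop-4.19} are available.
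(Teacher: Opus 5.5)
Your proof is correct and rests on exactly the same ingredients as the paper's, namely the inequality and equality parts of Proposition \ref{prop-4.19}. The paper organizes it differently: it assumes one of the four quantities is finite, uses the inequality part to get $\pd_RC=\pd_{S^{op}}C$ finite, and concludes that each quantity is at least the other three. You instead split directly on whether the common value $\pd_RC=\pd_{S^{op}}C$ is finite, which is slightly more direct.
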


\begin{proof}
Suppose $\mathcal{B}_C(R)\text{-}\id R/J(R)=n<\infty$. Then
$\pd_RC=\pd_{S^{op}}C=n$ by assumption and Proposition \ref{prop-4.19}(1),
and thus
$$\mathcal{B}_C(S^{op})\text{-}\id S/J(S)=
\mathcal{A}_C(R^{op})\text{-}\pd R/J(R)=\mathcal{A}_C(S)\text{-}\pd S/J(S)=n$$
by Proposition \ref{prop-4.19}. This shows that the the first quantity in (4.6)
is at least the last three ones. Similarly, we get that any quantity in (4.6)
is at least the other three ones.
\end{proof}

As a consequence, we get the following result.

\begin{cor}\label{cor-4.21}
Let $R$ be a left coherent semilocal ring and $S$ a right coherent semilocal ring.
Then the following statements are equivalent.
\begin{enumerate}
\item[{\rm (1)}] $\pd_RC=\pd_{S^{op}}C<\infty$.
\item[{\rm (2)}] $\mathcal{B}_C(R)\text{-}\id R/J(R)<\infty$ and $\mathcal{B}_C(S^{op})\text{-}\id S/J(S)<\infty$.
\item[{\rm (3)}] $\mathcal{A}_C(R^{op})\text{-}\pd R/J(R)<\infty$ and $\mathcal{A}_C(S)\text{-}\pd S/J(S)<\infty$.
\end{enumerate}
If one of the above conditions is satisfied, then all these six quantities are identical.
\end{cor}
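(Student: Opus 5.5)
The statement to prove is Corollary \ref{cor-4.21}. Its ingredients are Proposition \ref{prop-4.19}, Corollary \ref{cor-4.20} and \cite[Proposition 4.1]{TH3}, which says that $\pd_RC=\pd_{S^{op}}C$ whenever both are finite. The plan is to show $(1)\Leftrightarrow(2)$ and $(1)\Leftrightarrow(3)$ separately, and then read off the equality of the six quantities: the two projective dimensions of $C$ together with the four quantities in (4.6).

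For $(1)\Rightarrow(2)$ and $(1)\Rightarrow(3)$, assume $\pd_RC=\pd_{S^{op}}C=n<\infty$. Since $\pd_{S^{op}}C<\infty$, the ``furthermore'' part of Proposition \ref{prop-4.19}(1) gives
$$\pd_RC=\mathcal{B}_C(R)\text{-}\id R/J(R)=\mathcal{A}_C(R^{op})\text{-}\pd R/J(R).$$
Since $\pd_RC<\infty$, Proposition \ref{prop-4.19}(2) likewise gives
$$\pd_{S^{op}}C=\mathcal{B}_C(S^{op})\text{-}\id S/J(S)=\mathcal{A}_C(S)\text{-}\pd S/J(S).$$
All of these equal $n$, so (2) and (3) hold. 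At the same time this shows that all six quantities coincide. (Corollary \ref{cor-4.20} also yields the equality of the four quantities in (4.6) directly.)

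For $(2)\Rightarrow(1)$, the first inequality of Proposition \ref{prop-4.19}(1) gives $\pd_RC\leq\mathcal{B}_C(R)\text{-}\id R/J(R)<\infty$. Similarly, Proposition \ref{prop-4.19}(2) gives $\pd_{S^{op}}C\leq\mathcal{B}_C(S^{op})\text{-}\id S/J(S)<\infty$. With both finite, \cite[Proposition 4.1]{TH3} yields $\pd_RC=\pd_{S^{op}}C<\infty$.

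For $(3)\Rightarrow(1)$ the argument is identical, using the bounds by $\mathcal{A}_C(R^{op})\text{-}\pd R/J(R)$ and $\mathcal{A}_C(S)\text{-}\pd S/J(S)$ that appear in the same minima.

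Once (1) holds, the first paragraph applies and gives the equality of all six quantities. There is no real obstacle here. The only point needing care is that the ``furthermore'' parts of Proposition \ref{prop-4.19} require finiteness on the opposite side. That finiteness is exactly what the paired hypotheses in (2) and (3) supply via the minimum inequalities.
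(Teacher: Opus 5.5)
Your proof is correct and is essentially the paper's argument. For $(2)\Rightarrow(1)$ and $(3)\Rightarrow(1)$ you do exactly what the paper does: use the minimum inequalities of Proposition~\ref{prop-4.19}, then the finite-equality result it relies on.

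The differences are only in citation. For $(1)\Rightarrow(2),(3)$ the paper cites Theorem~\ref{thm-4.15} directly, and for the equality of the six quantities it cites Corollary~\ref{cor-4.20}. You get both at once from the ``furthermore'' clauses of Proposition~\ref{prop-4.19}, which are themselves derived from Theorem~\ref{thm-4.15}.
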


\begin{proof}
The implications that $(1)\Longrightarrow (2)$ and $(1)\Longrightarrow (3)$ follow from Theorem \ref{thm-4.15}, and 
the implications that $(2)\Longrightarrow (1)$ and $(3)\Longrightarrow (1)$ follow from Proposition \ref{prop-4.19}.
The last assertion follows from Corollary \ref{cor-4.20}.
\end{proof}

\section*{Acknowledgements} 
The author thanks Jiangsheng Hu and Junpeng Wang for constructive communication, and thanks the referee for useful suggestions.

\section*{Data availability} 
No data was used for the research described in the article.

\section*{Declarations} 
{\bf Conflict of interest} The author declares that he has no conflict of interest.

\end{document}